\newtheorem{theorem}{Theorem}[]
\newtheorem{lemma}[theorem]{Lemma}
\newtheorem{remark}[theorem]{Remark}
\newtheorem{conjecture}{Conjecture}[section]
\newcommand \Cm { \mathbb{C}}
\newcommand \Dm { \mathbb{D}}
\newcommand \Rm { \mathbb{R}}
\newcommand \Nm { \mathbb{N}}
\newcommand \Sm { \mathbb{S}}
\newcommand \Zm { \mathbb{Z}}
\newcommand \zbar { \bar{z}}
\newcommand{\Vol}{ \text{Vol}}
\renewcommand \L {{\cal L}}
\newcommand \D {{\cal D}}
\newcommand \T {{\cal T}}
\newcommand \dom {{\text{dom}}}
\renewcommand \SS {{\cal{S}}}
\renewcommand \ss {{\mathfrak{s}}}
\newcommand \wtH {\widetilde{H}}
\newcommand \wtphi {\widetilde{\phi}}
\newcommand \wtpsi {\widetilde{\psi}}
\newcommand \gnk[1] {Z_{n,k}^{#1}}
\newcommand \signk[1] {\sigma_{n,k}^{#1}}
\newcommand \psink[1] {\psi_{n,k}^{#1}}
\newcommand \F[1] {#1}
\title{The $C^\infty$-isomorphism property for a class of singularly-weighted X-ray transforms}
\author{Rohit Kumar Mishra\thanks{Indian Institute of Technology Gandhinagar, Palaj, Gandhinagar, Gujarat; email:rohit.m@iitgn.ac.in} \and Fran\c{c}ois Monard\thanks{Department of Mathematics, University of California, Santa Cruz CA 95064; email:fmonard@ucsc.edu} \and Yuzhou Zou\thanks{Department of Mathematics, University of California, Santa Cruz CA 95064; email:yzou34@ucsc.edu}}
\date{}
\begin{document}
\maketitle

\begin{abstract}
    We study a one-parameter family of self-adjoint \F{normal operators for} the X-ray transform on the \F{closed} Euclidean disk $\Dm$, obtained by considering specific singularly weighted $L^2$ topologies. We first recover the well-known Singular Value Decompositions in terms of orthogonal disk (or generalized Zernike) polynomials, then prove that each such realization is an isomorphism of $C^\infty(\Dm)$. As corollaries: we give some range characterizations; we \F{show how such choices of normal operators can be expressed as functions of two distinguished differential operators}. \F{We also show that the isomorphism property also holds on a class of constant-curvature, circularly symmetric simple surfaces.} These results allow to design functional contexts where normal operators built out of the X-ray transform are provably invertible, in Fr\'echet and Hilbert spaces encoding specific boundary behavior.
\end{abstract}


\section{Introduction} \label{sec:intro}

The X-ray transform and its generalizations (on manifolds, with connection or attenuation matrix a.k.a. Higgs field) have several applications to medical, geophysical, and material imaging, while providing challenging theoretical questions in analysis and geometry, see \cite{Ilmavirta2019,Paternain2021}. Mathematically, given a convex, non-trapping Riemannian manifold with boundary $(M^d,g)$, one may model the space of unit-speed geodesics through $M$ as the inward-pointing unit vectors at the boundary
\begin{align*}
    \partial_+ SM = \{ (x,v)\in TM,\ x\in \partial M,\ g_x(v,v) = 1,\ \mu(x,v) \ge 0 \},
\end{align*}
where $\mu(x,v) = g_x(v,\nu_x)$ and $\nu_x$ is the inward-pointing normal at $x\in \partial M$. Upon equipping $M$ with its Riemannian volume form $d\Vol_g$ and $\partial_+ SM$ with its Sasaki volume form $d\Sigma^{2d-2}$, the geodesic X-ray transform $I_0\colon L^2(M, d\Vol_g) \to L^2(\partial_+ SM, \mu\ d\Sigma^{2d-2})$ is the map
\begin{align}
    I_0 f(x,v) = \int_0^{\tau(x,v)} f(\gamma_{x,v}(t))\ dt, \qquad (x,v)\in \partial_+ SM, 
    \label{eq:I0}
\end{align}
where $\gamma_{x,v}(t)$ is the unit-speed geodesic passing through $(x,v)$ and $\tau(x,v)$ is its first exit time, see \cite{Paternain2021}. We will denote $I_0^\sharp$ \F{the Hilbert space adjoint of \eqref{eq:I0}}, also known as the backprojection operator.  

The study of refined mapping properties for X-ray transform on manifolds with boundary and their normal operators, in particular accounting for boundary behavior, has recently gained attention \cite{Assylbekov2018,Monard2017,Monard2019,Monard2020a}, see also \cite{Krishnan2021,Sharafutdinov2020} for recent work on the boundary-less case. An important application is to obtain theoretical guarantees for the consistency and uncertainty quantification of statistical recovery algorithms (aimed at inverting, e.g., a noisy operator $I_0$), which rely on having a good functional framework where the operator considered is invertible. Normal operators associated with X-ray transforms in ``simple'' geometries (e.g., $I_0^\sharp I_0$) have been long known to be elliptic pseudodifferential operators in the interior \cite{Stefanov2004}, and although this provides local stability estimates in the interior, obtaining full invertibility all the way to the boundary requires deeper study. In recent works \cite{Monard2017,Monard2019,Monard2020a,Monard2019a,Mazzeo2021}, it has been observed that considering singularly weighted $L^2$ topologies on $M$ and $\partial_+ SM$ could lead to normal operators (all of the form $I_0^\sharp w_2 I_0 w_1$ \F{ with $w_{1}$ and $w_2$ weight functions on $M$ and $\partial_+ SM$, respectively}) with similar ellipticity property in the interior, while having desirable global functional properties (e.g., invertibility all the way to the boundary), and providing new ways to reconstruct $f$ from $I_0 f$. 

Depending on the choice of weights $w_{1,2}$, one becomes able to find scales of Sobolev spaces on $M$ where continuity and stability estimates can be formulated, even sometimes in an isometric manner. Those have included classical Sobolev and transmission Sobolev scales \cite{Monard2017} for the study of $I_0^\sharp I_0$, or a Sobolev scale obtained as domain spaces of powers of a degenerate elliptic operator \cite{Monard2019a} for the study of $I_0^\sharp \frac{1}{\mu} I_0$ on circularly symmetric, constant-curvature disks. One may then wonder what happens at the Fr\'echet intersection of each \F{of these} scale\F{s}, and whether one can find a setting where the mapping properties of $I_0^\sharp w_2 I_0 w_1$ can be formulated in the same scale on both the domain and the co-domain. Having the latter helps predict the regularity of the eigenfunctions, and it allows to iterate the operator, as is customarily done in certain recovery algorithms like Landweber's iteration. 

Recently, forward mapping properties have been obtained for $I_0$ and $I_0^\sharp$ on strictly geodesically convex Riemannian manifolds \cite{Mazzeo2021}, allowing one to predict the {\em index set}\footnote{By ``index set'' here we mean, the collection of $(z,k)\in \Cm\times \Nm_0$ such that a term $d^z (\log d)^k$ appears in the expansion of $f$ off the boundary, where $d$ is a boundary defining function.} of $I_0 f$ from that of $f$, and the index set of $I_0^\sharp g$ from that of $g$. This allows for a systematic understanding of the forward mapping properties of operators of the form $I_0^\sharp w_2 I_0 w_1$ on Fr\'echet subspaces of $C^\infty(M^{int})$ with polyhomogeneous conormal expansions off of $\partial M$, where the weights $w_1, w_2$ have specific singular boundary behavior. Depending on the choice of weights, there \F{may be} several (desirable or less desirable) possible outcomes: a normal operator may \F{for example} (i) map a Fr\'echet space $F$ into itself, (ii) cycle through a finite sequence of Fr\'echet spaces or (iii) yield a non-repeating chain of Fr\'echet spaces $F_1\to F_2\to F_3\dots$. The operator $I_0^\sharp I_0$ falls into the third category, while the operator $I_0^\sharp \frac{1}{\mu} I_0$ falls either into the second category as it maps the following
\begin{align*}
    C^\infty(M) + \log d\ C^\infty(M) \stackrel{I_0^\sharp \mu^{-1} I_0}{\longrightarrow} C^\infty(M) + d^{1/2} C^\infty(M) \stackrel{I_0^\sharp \mu^{-1} I_0}{\longrightarrow} C^\infty(M) + \log d\ C^\infty(M),
\end{align*}
or even the first category when $F = C^\infty(M)$.

To make this last example more general, it is observed in \cite{Mazzeo2021} that if $(d,t)$ denote boundary defining functions for $M$ and $\partial_+ SM$ (with an additional requirement on $t$ restricting its index set, see \cite[Def. 4.4]{Mazzeo2021}), for any $\gamma>-1$, the operator $I_0^\sharp t^{-2\gamma-1} I_0 d^{\gamma}$ maps $C^\infty(M)$ into itself. For purposes of inversion, one naturally asks whether the converse inclusion holds, and Conjecture 2.9 in \cite{Mazzeo2021} states: 

\begin{conjecture}\label{conj}
    Given $(M,g)$ a simple Riemannian manifold with boundary and $\gamma>-1$, there exist $(d,t)$ boundary defining functions for $M$ and $\partial_+ SM$ making $I_0^\sharp t^{-2\gamma-1} I_0 d^\gamma$ an isomorphism of $C^\infty(M)$. 
\end{conjecture}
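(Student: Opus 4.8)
The plan is to reduce the isomorphism claim to three ingredients --- interior ellipticity, self-adjointness, and injectivity --- and to concentrate the entire difficulty in a single boundary-invertibility statement. Write $A_\gamma := I_0^\sharp t^{-2\gamma-1} I_0 d^\gamma$; by \cite{Mazzeo2021} (and the discussion preceding the conjecture) $A_\gamma$ maps $C^\infty(M)$ into itself for every admissible pair $(d,t)$, so only bijectivity is at stake. The first observation is that $A_\gamma$ is conjugate to a nonnegative self-adjoint operator: setting $T := t^{-\gamma-1/2} I_0 d^{\gamma/2}$ one computes $T^\ast T = d^{\gamma/2} A_\gamma d^{-\gamma/2}$, so $A_\gamma$ is self-adjoint on the weighted space $L^2(M, d^\gamma\, d\Vol_g)$ (a finite measure since $\gamma>-1$). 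This immediately yields injectivity: if $A_\gamma f = 0$ then $T^\ast T(d^{\gamma/2}f) = 0$, whence $\|T d^{\gamma/2} f\|^2 = \langle T^\ast T d^{\gamma/2}f, d^{\gamma/2}f\rangle = 0$, so $I_0(d^\gamma f) = 0$; injectivity of $I_0$ on functions over a simple manifold then forces $d^\gamma f = 0$ and hence $f = 0$. Self-adjointness also pins the Fredholm index to $0$. Thus it suffices to prove that $A_\gamma$ is Fredholm on a scale of Hilbert spaces whose Fr\'echet intersection is $C^\infty(M)$, together with the regularity statement that its inverse preserves $C^\infty(M)$.

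For the Fredholm property I would build a parametrix in two regions. In the interior $M^{int}$, $A_\gamma$ is an elliptic pseudodifferential operator of order $-1$ by \cite{Stefanov2004}, so a standard parametrix controls it modulo smoothing there. The genuine issue lies at $\partial M$, where the half-order vanishing of $\mu$ along glancing directions and the singular weights $d^\gamma$, $t^{-2\gamma-1}$ conspire. Here I would pass to boundary normal coordinates and show that $A_\gamma$ belongs to a calculus of degenerate conormal (totally characteristic) operators adapted to this geometry, to which one attaches a boundary normal operator $N_p(A_\gamma)$ at each $p\in\partial M$, obtained by freezing the metric at $p$ and performing the natural parabolic rescaling that respects the tangency scaling of near-boundary geodesics. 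The forward calculus of \cite{Mazzeo2021}, which already tracks index sets through $I_0$ and $I_0^\sharp$ near a strictly convex boundary, is the natural home for identifying this model and for verifying closure under composition.

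The decisive step is to invert $N_p(A_\gamma)$. The point is that the boundary asymptotics of the X-ray transform near a strictly convex boundary are \emph{universal}: to leading order they depend only on the second fundamental form at $p$ and, after the chosen normalization of $(d,t)$, on $\gamma$. I therefore expect $N_p(A_\gamma)$ to coincide, up to an explicit change of variables, with the boundary normal operator of the corresponding weighted transform on the osculating constant-curvature model at $p$. For those models the present paper provides exact diagonalizations (the Zernike-type singular value decompositions), from which the invertibility of the model normal operator --- and a lower bound on its singular values, uniform in $p$ by compactness of $\partial M$ --- can be read off directly. Inverting $N_p(A_\gamma)$ fiberwise, patching with the interior parametrix, and improving the error yields a two-sided parametrix modulo compact (indeed smoothing) operators, hence the Fredholm property; combined with index $0$ and injectivity this gives bijectivity on each space of the scale, and, the parametrix living within the calculus, propagates smoothness so that the inverse preserves $C^\infty(M)$. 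Taking the Fr\'echet intersection then yields the isomorphism of $C^\infty(M)$.

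The main obstacle is precisely this boundary analysis in the \emph{absence of symmetry}, where no global singular value decomposition is available and a boundary parametrix must be produced by hand. Three points demand care. First, one must pin down the correct degenerate calculus: the interaction of $d^\gamma$ and $t^{-2\gamma-1}$ with the glancing degeneracy must land in a class closed under composition and parametrix inversion, and here the freedom to choose $(d,t)$ in the conjecture is used both to realize the self-adjoint weight $d^\gamma\, d\Vol_g$ and to normalize $N_p(A_\gamma)$ to the model form. Second, one must show that variable curvature and a nonconstant second fundamental form enter $N_p(A_\gamma)$ only through parameters that do not destroy invertibility; since invertibility of the model is an open condition and the symbolic dependence is smooth this is plausible, but establishing a uniform lower bound up to $\partial M$ is delicate. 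Third, matching the index set of the inverse against $C^\infty(M)$ requires that inverting the normal operator create no anomalous $\log d$ or non-integer powers --- a compatibility to be checked against the resonances of $N_p(A_\gamma)$ computed from the model. It is this identification and uniform inversion of the boundary normal operator, rather than the soft functional-analytic scaffolding, that constitutes the heart of the proof.
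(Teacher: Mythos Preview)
The statement you are attempting to prove is labeled a \emph{Conjecture} in the paper, and the paper does \emph{not} prove it in the stated generality. What the paper establishes are the special cases $M=\Dm$ (Theorem~\ref{thm:isomorphism}) and $M$ a simple geodesic disk of constant curvature (Theorem~\ref{thm:CCD}), by methods that have no obvious extension to general simple manifolds: an explicit singular value decomposition of $I_0 d^\gamma$ in terms of generalized Zernike polynomials, Sobolev-scale estimates built directly from that SVD (Lemmas~\ref{lem:SVDasym}--\ref{lem:continc}) showing $\cap_s \wtH^{s,\gamma}=C^\infty(\Dm)$ and tame two-sided bounds, and then transport to constant-curvature disks via an explicit intertwining diffeomorphism. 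None of this machinery survives once rotational symmetry is lost.

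Your proposal is therefore not to be compared against a proof in the paper --- there is none --- but assessed as a sketch of a program toward the full conjecture. The soft parts are fine: self-adjointness of $A_\gamma$ on $L^2(M,d^\gamma\,d\Vol_g)$ and injectivity via injectivity of $I_0$ on simple manifolds are correct and already noted in the paper for the disk. The substantive content, however, is not carried out: the existence of a degenerate calculus near $\partial M$ containing $A_\gamma$ and closed under composition and parametrix inversion, the identification of the normal operator $N_p(A_\gamma)$ with a constant-curvature model, and its uniform inversion with the correct index-set behavior, are all asserted or ``expected'' rather than proved. You yourself flag this as ``the main obstacle'' and ``the heart of the proof.'' In particular, the paper's SVD is a \emph{global} object on the full disk, not a local boundary model; extracting from it an inverse for a putative boundary normal operator, and showing that variable curvature and second fundamental form perturb only harmlessly, would itself be a substantial piece of analysis. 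As written, the proposal assumes precisely what remains open.
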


Conjecture \ref{conj} holds true when $\gamma = -1/2$ and $(M,g)$ is a simple surface \cite{Monard2017}, or when $\gamma=0$ and $(M,g)$ is a simple geodesic disk of constant curvature \cite{Monard2019a}, although results remain sparse at this point.  

Coincidentally, for cases like the Euclidean disk, these weights are precisely those for which the Singular Value Decomposition (SVD) of the X-ray transform is known \cite{Louis1984}, although it seems that further treatment did not follow.  

In the current article, we first re-derive this SVD in more recent language associated with the X-ray transform (see Theorem \ref{thm:SVD}). Specifically, we use the method of intertwining differential operators, whose one-dimensional version was previously used in \cite{Maass1991}, here in such a way that the separation by angular harmonics is not required, see also \cite{Mishra2019,Monard2019a}. The derivation of the SVD also implies a range characterization of each of these weighted X-ray transforms, in the form of consistency/moment conditions {\it \`a la} Helgason-Ludwig or Gelfand-Graev \cite{Helgason1999,Ludwig1966,GelfandGraev1960}. 

Second, we then use this SVD to show that Conjecture \ref{conj} holds for all $\gamma>-1$ in the Euclidean unit disk (see Theorem \ref{thm:isomorphism}). The proof is similar to the case $\gamma=0$ previously treated in \cite{Monard2019a}, i.e. based on a study of the eigenfunctions (generalized Zernike polynomials in this case, denoted $\{\gnk{\gamma}\}_{n\ge 0, 0\le k\le n}$ below, equal up to scaling to the polynomials $P_{n-k,k}^\gamma$ defined in \cite{Wuensche2005}): key ingredients are (i) knowledge of their $L^\infty$ and $L^2$ norms, (ii) understanding how the derivatives $\partial_z$, $\partial_{\zbar}$ act on them, and (iii) defining a ``natural'' scale of Hilbert spaces $\{\wtH^{s, \gamma}\}_{s\ge 0}$ (defined in terms of decay along $\{\gnk{\gamma}\}_{n,k}$, see Eq. \eqref{eq:Sobolev}) with intersection $C^\infty(\Dm)$, on which $I_0^\sharp \mu^{-2\gamma-1} I_0 d^\gamma$ is tame with tame inverse. 

Though this article is self-contained, we briefly mention here key modifications from the proof in \cite{Monard2019a}. Regarding (i), unlike for $\gamma=0$, the $L^\infty$ norm cannot be sharply computed, thus we produce a slightly lossy estimate (Lemma \ref{lem:linftybound}) by expressing generalized Zernike polynomials as ``backprojected'' from Gegenbauer polynomials, whose $L^\infty$ norms are known. Regarding (ii), the action of $\partial_z$ and $\partial_{\zbar}$ on $\{\gnk{\gamma}\}$ can not be simply expressed into the same basis, though identities in \cite{Wuensche2005}  show that it is rather compact when expressed in $\{\gnk{\gamma+1}\}$ (i.e., the basis whose parameter $\gamma$ is shifted up by 1). As a result, the proof of injections of $\wtH^{s,\gamma}$ into $C^k$ spaces, and in turn that $\cap_{s\ge 0} \wtH^{s,\gamma} = C^\infty(\Dm)$ for all $\gamma>-1$, exploit properties of $\{\gnk{\gamma}\}$ as a triply-indexed family. Finally, when $\gamma\ne 0$ the singular values of $I_0^\sharp \mu^{-2\gamma-1} I_0 d^\gamma$ depend on {\em both} indices $n,k$ while for $\gamma=0$, they only depend on $n$. This difference has a couple of implications: First, while it is shown below that, like for $\gamma=0$ in \cite{Monard2019a}, there is a degenerate elliptic operator $\L_\gamma$ (defined in \eqref{eq:Lgamma}) of Kimura\footnote{see \cite{Epstein2013}} type whose eigenfunctions are also $\{\gnk{\gamma} \}_{n,k}$, the normal operator $I_0^\sharp \mu^{-2\gamma-1} I_0 d^\gamma$ is no longer in the functional calculus of $\L_\gamma$ alone, but in the {\em joint} functional calculus of $\L_\gamma$ and $\partial_\omega$ ($\omega$: angular coordinate on the disk). Second, eigenvalue asymptotics for $I_0^\sharp \mu^{-2\gamma-1} I_0 d^\gamma$ for $\gamma\ne 0$ are more complex than for $\gamma=0$, and this implies that, since $\wtH^{s,\gamma}$ can be thought of as the domain of $\L_\gamma^{s/2}$, this Sobolev scale when $\gamma\ne 0$ does not quite accurately capture Sobolev mapping properties of $I_0^\sharp \mu^{-2\gamma-1} I_0 d^\gamma$ in an isometric way. Indeed, on this scale the exponents of boundedness of the operator and its inverse are not negatives of one another.

Finally, exploiting intertwining diffeomorphisms between geometries as in \cite{Mishra2019,Monard2019a}, we also prove that Conjecture \ref{conj} holds true for all $\gamma>-1$ on simple geodesic disks of constant curvature, see Theorem \ref{thm:CCD} below. 

In the next section, we set the stage and discuss the main results before giving an outline of the remainder of the paper.

\section{Statement of the main results} \label{sec:main}

\subsection{Preliminaries}\label{ssec:prelim}

The domain will be \F{the closed unit disk} $\Dm = \{z\in \Cm, |z|\le 1\}$, equipped with the Euclidean metric and area form. The space of directed geodesics is identified with $\partial_+ S\Dm$, parameterized in fan-beam coordinates $(\beta,\alpha)\in \Sm^1\times [-\pi/2,\pi/2]$. A pair $(\beta,\alpha)$ describes the unique line passing through the boundary point $e^{i\beta}$ with ``\F{incoming}'' tangent vector $e^{i(\beta+\pi+\alpha)}$, given by 
\begin{align}
    \gamma_{\beta,\alpha}(t) = e^{i\beta} + t e^{i(\beta+\pi+\alpha)}, \qquad 0\le t\le 2\cos\alpha,
    \label{eq:gba}
\end{align}
\F{see Figure \ref{fig:setting}.}
\begin{figure}[htpb]
    \centering
    \includegraphics[height=0.24\textheight]{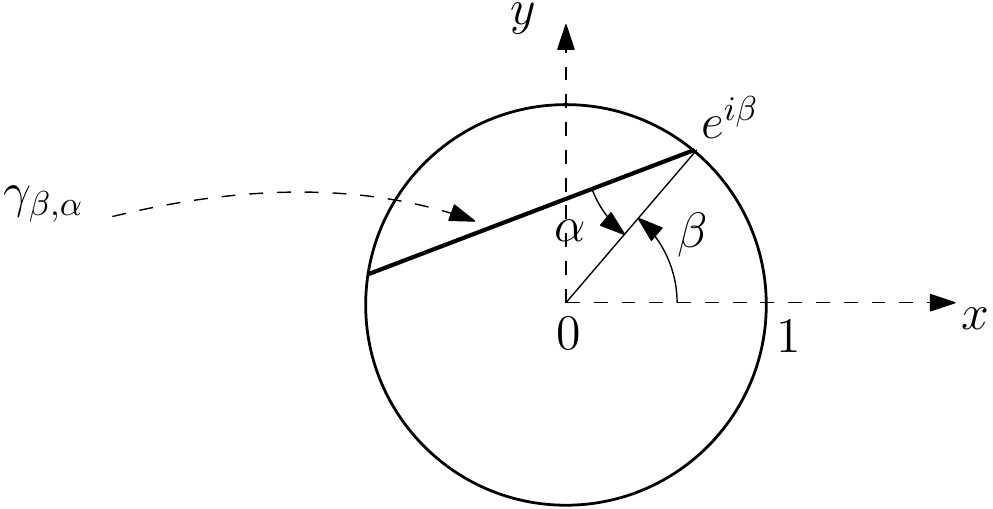}
    \caption{The setting of fan-beam coordinates.}
    \label{fig:setting}
\end{figure}
Then $\partial_+ S\Dm$ is a manifold with two boundary components $\{\alpha = \pm \pi/2\}$ and area form $d\Sigma^2 = d\beta\ d\alpha$. We equip $M$ and $\partial_+ SM$ with the following functions
\begin{align}
    d := 1-|z|^2, \quad (z\in \Dm) \quad \text{and} \quad \mu(\beta,\alpha) = \cos\alpha, \quad ( (\beta,\alpha)\in \partial_+ SM ). 
    \label{eq:bdfs}
\end{align}
\F{The function $d$ is boundary defining for the smooth structure induced by $z$, and the function $\mu$ is boundary defining for the smooth structure induced by $(\beta,\alpha)$.}

The X-ray transform $I_0$ defined in \eqref{eq:I0} and its adjoint $I_0^\sharp$ take the form 
\begin{align}
    I_0 f(\beta,\alpha) &= \int_0^{2\cos\alpha} f(\gamma_{\beta,\alpha}(t))\ dt, \qquad f\in \F{L^2}(\Dm),\ (\beta,\alpha)\in \partial_+ S\Dm, \label{eq:Xray}\\
    I_0^\sharp g(z) &= \int_{\Sm^1} g(\beta_-(z,\theta),\alpha_-(z,\theta))\ d\theta, \qquad g\in \F{L^2(\partial_+ S\Dm, \mu d\Sigma^2)},\ z\in \Dm, \label{eq:backproj}
\end{align}
where $(\beta_-,\alpha_-)(z,\theta)$ are the fan-beam coordinates of the unique line passing through $(z,e^{i\theta})$.

Our object of study is a family of singularly weighted X-ray transforms: given $\gamma>-1$, define the operator $I_0 d^\gamma (f):= I_0 (d^\gamma f)$ defined on $C(\Dm)$. By \cite[Prop. 2.7]{Mazzeo2021}, such an operator can be extended as a bounded operator in the Hilbert setting 
\begin{align}
    I_0 d^\gamma\colon L^2(\Dm, d^\gamma |dz|^2) \to L^2(\partial_+ S\Dm, \mu^{-2\gamma} d\Sigma^2), \qquad I_0 d^\gamma (f) := I_0 (d^\gamma f),
    \label{eq:weightedXray}
\end{align}
with Hilbert adjoint $I_0^\sharp \mu^{-2\gamma-1}$. For conciseness, we will drop the factors $|dz|^2$ and $d\Sigma^2$ below. 

\subsection{Main results}\label{ssec:main}

Our first result is a re-derivation of the SVD of the operator \eqref{eq:weightedXray}, in the spirit of the method of intertwining differential operators \cite{Maass1991} without separating the \F{angular} harmonic numbers. These SVDs are a special case of \cite{Louis1984}, which in higher dimensions covers the SVD of the Radon transform (integration over hyperplanes) rather than the X-ray transform. Below, the $\widehat{(\cdot)}$ notation denotes normalization in their respective spaces. On $\partial_+ S\Dm$, let us define 
\begin{align}
    \psink{\gamma} = \mu^{2\gamma+1} \frac{(-1)^n}{2\pi} e^{i(n-2k)(\beta+\alpha)} L_n^\gamma(\sin\alpha), \qquad n\ge 0, \qquad k\in \Zm,
    \label{eq:psinkgamma}
\end{align}
where $L_n^\gamma\colon [-1,1]\to \Rm$ is the $n$-th orthogonal polynomial for the weight $(1-x^2)^{\gamma+1/2}$. One may fix a choice of normalization, though the formulation of the result below is independent of it.  

For what follows, on $\Dm$, let us then define
\begin{align}
    \gnk{\gamma} := I_0^\sharp \mu^{-2\gamma-1} \psink{\gamma}, \quad n\ge 0, \quad 0\le k\le n.     
    \label{eq:Zernikegamma} 
\end{align}
We will prove below that each $\gnk{\gamma}$ is a constant multiple of $P^\gamma_{n-k,k}$ where for $m,\ell\ge 0$, $P_{m,\ell}^\gamma$ is a disk polynomial or generalized Zernike polynomial as defined in \cite{Wuensche2005}, see also Section \ref{sec:diskpoly} below. 

\begin{theorem}\label{thm:SVD}
    Fix $\gamma>-1$. 

    (1) The operator $I_0^\sharp \mu^{-2\gamma-1}\colon L^2(\partial_+ S\Dm, \mu^{-2\gamma})\to L^2(\Dm, d^\gamma)$ has kernel 
    \begin{align}
	\ker (I_0^\sharp \mu^{-2\gamma-1}) = \text{span} \left( \psink{\gamma},\quad n\ge 0,\ k<0 \text{ or } k>n \right). 
	\label{eq:keradjoint}
    \end{align}

    (2) The singular Value Decomposition of 
    \begin{align*}
	I_0 d^\gamma\colon L^2(\Dm, d^\gamma) \to L^2(\partial_+ S\Dm, \mu^{-2\gamma})/\ker(I_0^\sharp \mu^{-2\gamma-1})	
    \end{align*}
    is given by $\left(\widehat{\gnk{\gamma}}, \widehat{\psink{\gamma}}, \signk{\gamma}\right)_{n\ge 0,\ 0\le k\le n}$, where $\gnk{\gamma}$ is defined in \eqref{eq:Zernikegamma}, $\psink{\gamma}$ is defined in \eqref{eq:psinkgamma}, and 
    \begin{align}
	(\signk{\gamma})^2 &:= 2^{2\gamma+2} \pi \binom{n}{k} \frac{\Gamma(n-k+\gamma+1)\Gamma(k+\gamma+1)}{\Gamma(n+2\gamma+2)} \label{eq:SVD1}\\
	&= \frac{2^{2\gamma+2} \pi}{n+1} \frac{B(n-k+1+\gamma, k+1+\gamma)}{B(n-k+1, k+1)}, \label{eq:SVD2}	
    \end{align}
    where $B(x,y) = \frac{\Gamma(x) \Gamma(y)}{\Gamma(x+y)}$ stands for the Beta function.
\end{theorem}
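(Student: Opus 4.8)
The plan is to establish the two singular-value identities
\[
I_0^\sharp\mu^{-2\gamma-1}\psink{\gamma} = \gnk{\gamma}, \qquad I_0 d^\gamma\,\gnk{\gamma} = (\signk{\gamma})^2\,\psink{\gamma},
\]
the first being the definition \eqref{eq:Zernikegamma}, and then to promote them to a genuine SVD by verifying orthogonality, completeness, and the norm bookkeeping producing \eqref{eq:SVD1}. My first step is to pass to the Radon normal parametrization $p=\sin\alpha$, $\phi=\beta+\alpha+\tfrac{\pi}{2}$, under which $\mu^{-2\gamma-1}\psink{\gamma}$ becomes, up to a unimodular constant, the separated function $e^{im\phi}L_n^\gamma(p)$ with $m:=n-2k$; this makes $m$ the angular frequency and exhibits $I_0 d^\gamma$ as a weighting of the classical Radon transform. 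A short computation of the fan-beam coordinates of the line through $z=re^{i\omega}$ in direction $e^{i\theta}$ (yielding $p=r\sin(\omega-\theta)$ and $e^{im(\beta_-+\alpha_-)}=e^{im\theta}$ up to a sign, which is absorbed since $(-1)^m=(-1)^n$) then collapses the backprojection to the one-dimensional Fourier integral
\[
\gnk{\gamma}(re^{i\omega}) = \frac{(-1)^n}{2\pi}\,e^{im\omega}\int_0^{2\pi}e^{-im\psi}\,L_n^\gamma(r\sin\psi)\,d\psi.
\]
Since $L_n^\gamma$ has degree $n$ and parity $(-1)^n$, this integral vanishes identically when $|m|>n$, and otherwise equals $e^{im\omega}$ times a radial polynomial of degree $n$ divisible by $r^{|m|}$, i.e.\ a generalized Zernike polynomial. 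This already yields the $k<0,\ k>n$ inclusion of Part (1).

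The crux is to identify this radial factor with the orthogonal (Jacobi/Zernike) polynomial $P^\gamma_{n-k,k}$, which is what forces orthogonality of $\{\gnk{\gamma}\}$. Here I would use the method of intertwining differential operators: the Gegenbauer profiles satisfy $\mathcal{G}\,L_n^\gamma = n(n+2\gamma+2)L_n^\gamma$ for the fiber Sturm--Liouville operator $\mathcal{G}$, and I would show that $I_0^\sharp\mu^{-2\gamma-1}$ intertwines $\mathcal{G}$ (together with $\partial_\beta$) with the Kimura-type operator $\L_\gamma$ (together with $\partial_\omega$) on the disk. Granting the intertwining, $\gnk{\gamma}$ is a degree-$n$ polynomial eigenfunction of $\L_\gamma$ in the sector $e^{im\omega}$, hence proportional to the Jacobi polynomial for the weight $u^{|m|}(1-u)^\gamma$ ($u=r^2$); this identifies it with $P^\gamma_{n-k,k}$, and, being a joint eigenfunction of the self-adjoint $\L_\gamma$ (eigenvalue a function of $n$) and $\partial_\omega$ (eigenvalue $im$), makes $\{\gnk{\gamma}\}$ orthogonal in $L^2(\Dm,d^\gamma)$. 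I expect establishing this intertwining identity to be the main obstacle, both in writing $\L_\gamma$ in the correct form and in verifying the operator identity against the explicit kernels of $I_0,I_0^\sharp$ (or via the transport equation). As a fallback one may evaluate the Fourier integral directly through the Gegenbauer generating function, which expresses it as a Jacobi polynomial and simultaneously delivers the normalizing constant.

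With the identification in hand the remaining pieces are bookkeeping. Orthogonality and the norm of $\psink{\gamma}$ follow from Gegenbauer orthogonality: the $\beta$-integral forces $m=m'$, and the substitution $x=\sin\alpha$ turns the weight $\cos^{2\gamma+2}\alpha\,d\alpha$ into $(1-x^2)^{\gamma+1/2}dx$. The forward relation $I_0 d^\gamma\gnk{\gamma}\propto\psink{\gamma}$ is then automatic: the left side lies in sector $m$, and pairing it against $\psi^\gamma_{j,k'}$ with $j\neq n$ and moving $I_0d^\gamma$ to the other side turns the pairing into $\langle\gnk{\gamma},Z^\gamma_{j,k'}\rangle_{L^2(d^\gamma)}=0$ by the Zernike orthogonality from the previous paragraph, so only the degree-$n$ component survives. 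Consequently $\gnk{\gamma}$ is an eigenfunction of $I_0^\sharp\mu^{-2\gamma-1}I_0 d^\gamma$ with eigenvalue $(\signk{\gamma})^2 = \|\gnk{\gamma}\|_{L^2(d^\gamma)}^2 / \|\psink{\gamma}\|_{L^2(\mu^{-2\gamma})}^2$, and inserting the Gegenbauer and generalized-Zernike norms (the latter from \cite{Wuensche2005}) and simplifying the Gamma factors yields \eqref{eq:SVD1}--\eqref{eq:SVD2}; this is routine but must be done carefully to land the exact constants.

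Finally, for the completeness underlying both parts, one works on the subspace of $L^2(\partial_+ S\Dm,\mu^{-2\gamma})$ even under the orientation-reversal involution $(p,\phi)\mapsto(-p,\phi+\pi)$ (equivalently $k\mapsto n-k$); the odd part lies automatically in $\ker(I_0^\sharp\mu^{-2\gamma-1})$, since the backprojection \eqref{eq:backproj} integrates over both orientations of each line. On the even subspace $\{\psink{\gamma}:n\ge0,\ k\in\Zm\}$ is a complete orthogonal family (a Fourier basis in $\beta$ times a Gegenbauer basis in $\alpha$, all frequencies $m\equiv n$ appearing). The subfamily $0\le k\le n$ maps, up to scaling, onto the complete Zernike family $\{\gnk{\gamma}\}$ in $L^2(\Dm,d^\gamma)$, so its complement $\{k<0\text{ or }k>n\}$ is exactly the even part of the kernel, which gives \eqref{eq:keradjoint} and identifies the quotient in Part (2) with $\operatorname{span}\{\widehat{\psink{\gamma}}:0\le k\le n\}$.
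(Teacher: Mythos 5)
Your proposal is correct and is essentially the paper's own proof: the explicit Fourier-integral evaluation of $I_0^\sharp \mu^{-2\gamma-1}\psink{\gamma}$ (the paper's proof of Lemma \ref{lem:kerI0star}, done in fan-beam rather than parallel-beam coordinates), the intertwining of a fiberwise Sturm--Liouville (Gegenbauer) operator and $\partial_\beta$ with the pair $(\L_\gamma,\partial_\omega)$ to identify $\gnk{\gamma}$ and obtain orthogonality (the paper's Lemmas \ref{lem:intertwiners}--\ref{lem:spectrum1}; your operator $\mathcal{G}$ is exactly $\mu^{-2\gamma-1}\T_\gamma\mu^{2\gamma+1}$ acting on the profiles $L_n^\gamma(\sin\alpha)$), and the same W\"unsche-based norm bookkeeping for \eqref{eq:SVD1}--\eqref{eq:SVD2}. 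The only real difference of execution is how the identification is finished: you solve the sectorwise Jacobi ODE in $u=r^2$, while the paper matches top-degree monomials and invokes uniqueness of orthogonal polynomial families; both work, and the intertwining identity you flag as the main obstacle need not be checked against explicit kernels, since it follows by conjugation with $\mu^{2\gamma+1}$ from the relations $L\circ I_0^\sharp = I_0^\sharp\circ(T^2+2\tan\alpha\,T)$, $-\rho\partial_\rho\circ I_0^\sharp = I_0^\sharp\circ\tan\alpha\,T$, $\partial_\omega\circ I_0^\sharp = I_0^\sharp\circ\partial_\beta$, which the paper imports from \cite{Monard2019a}.

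The one place you genuinely depart from the paper is the completeness discussion, and it is to your credit. You observe that the backprojection \eqref{eq:backproj} averages over both orientations of each line, so the $\SS_A$-odd subspace of $L^2(\partial_+ S\Dm,\mu^{-2\gamma})$ lies in $\ker(I_0^\sharp\mu^{-2\gamma-1})$, while every $\psink{\gamma}$ is $\SS_A$-even (the frequency $n-2k$ has the parity of $n$), so that family is complete only in the even subspace. What your argument proves is therefore $\ker(I_0^\sharp\mu^{-2\gamma-1}) = (\text{odd subspace})\oplus\overline{\text{span}}\{\psink{\gamma}\colon k<0 \text{ or } k>n\}$; the paper deduces \eqref{eq:keradjoint} from Lemma \ref{lem:kerI0star} alone and silently omits the odd summand, so its formula should be read on the even subspace (Part (2), which quotients by the kernel, is unaffected). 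One small slip in your write-up: the orientation-reversal involution is not ``equivalently $k\mapsto n-k$'' on indices --- all $\psink{\gamma}$ are fixed by it, whereas $k\mapsto n-k$ realizes $m\mapsto -m$, i.e.\ complex conjugation; this plays no role in your argument.
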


Theorem \ref{thm:SVD} implies the eigenequation 
\begin{align}
    I_0^\sharp \mu^{-2\gamma-1} I_0 d^\gamma \gnk{\gamma} = (\signk{\gamma})^2 \gnk{\gamma}, \qquad n\in \Nm_0,\ 0\le k\le n. 
    \label{eq:eigeneq}
\end{align}

It is proved in \cite[Lemma 2.8]{Mazzeo2021} that an operator of the form $I_0^\sharp \mu^{-2\gamma-1} I_0 d^\gamma$ maps $C^\infty(\Dm)$ into itself (in fact, this holds for a larger class of Riemannian manifolds and boundary defining functions). \F{It is also injective, as one may see in at least two ways: for any $\gamma>-1$, $d^\gamma C^\infty(\Dm)\subset L^1(\Dm)$ and since it is well-known that $I_0$ is injective on $L^1(\Dm)$, then $I_0 d^\gamma$ is injective on $C^\infty(\Dm)$; alternatively, as a direct follow-up to Theorem \ref{thm:SVD}, since from \eqref{eq:SVD1}, $\sigma^\gamma_{n,k}\ne 0$ for all $n,k$, and \eqref{eq:eigeneq} gives a full spectral decomposition, $I_0^\sharp \mu^{-2\gamma-1} I_0 d^\gamma$ is injective on $L^2(\Dm,d^\gamma)$, in particular on the subspace $d^\gamma C^\infty(\Dm)$. }

Using \eqref{eq:eigeneq} and additional properties of generalized Zernike polynomials as defined in \eqref{eq:Zernikegamma}, we then prove the $C^\infty$-isomorphism property for our operators of interest. 

\begin{theorem}\label{thm:isomorphism}
    \F{For any $\gamma>-1$, the operator $I_0^{\sharp}\mu^{-2\gamma-1}I_0d^{\gamma}$ is an isomorphism of $C^\infty(\Dm)$.}
\end{theorem}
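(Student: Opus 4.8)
The plan is to prove that $A_\gamma := I_0^\sharp \mu^{-2\gamma-1} I_0 d^\gamma$ is a bijection of $C^\infty(\Dm)$ by working entirely in the orthonormal eigenbasis $\{\widehat{\gnk{\gamma}}\}$ provided by Theorem~\ref{thm:SVD}. Since \eqref{eq:eigeneq} already exhibits a full spectral decomposition with nonzero eigenvalues $(\signk{\gamma})^2$, the operator is formally invertible on the algebraic span, with inverse acting by $\widehat{\gnk{\gamma}} \mapsto (\signk{\gamma})^{-2}\widehat{\gnk{\gamma}}$. The entire content of the theorem is therefore the \emph{topological} statement: that both $A_\gamma$ and its formal inverse map $C^\infty(\Dm)$ continuously into itself. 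The natural strategy, following \cite{Monard2019a}, is to introduce the Sobolev-type scale $\{\wtH^{s,\gamma}\}_{s\ge 0}$ defined by weighted decay of the coefficients along the eigenbasis (Eq.~\eqref{eq:Sobolev}), and to reduce the whole problem to two independent pillars: first, a coefficient-level mapping statement, and second, the identification $\bigcap_{s\ge 0}\wtH^{s,\gamma} = C^\infty(\Dm)$.

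The first pillar is the easier one. I would read off from \eqref{eq:SVD1} the precise two-sided asymptotics of $(\signk{\gamma})^2$ as $n,k\to\infty$; the ratio of Gamma functions grows/decays polynomially in $n$ and $k$, so $(\signk{\gamma})^{\pm 2}$ is bounded above and below by fixed powers of $(1+n)$ and of the ``edge distances'' $\min(k,n-k)$. Consequently multiplication by $(\signk{\gamma})^{\pm 2}$ shifts the $\wtH^{s,\gamma}$-index by a bounded amount, so $A_\gamma\colon \wtH^{s,\gamma}\to \wtH^{s-c,\gamma}$ and $A_\gamma^{-1}\colon \wtH^{s,\gamma}\to \wtH^{s-c,\gamma}$ for a fixed loss $c$ and every $s$. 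Intersecting over all $s$, both maps preserve $\bigcap_s \wtH^{s,\gamma}$. (The introduction already flags that when $\gamma\ne 0$ the eigenvalues depend on both $n$ and $k$, so the exponents of boundedness for the operator and its inverse need not be negatives of one another; this costs nothing for the mere isomorphism claim, only for isometry.)

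The second pillar, the identification $\bigcap_{s\ge 0}\wtH^{s,\gamma} = C^\infty(\Dm)$, is where the real work and the main obstacle lie. The inclusion $C^\infty(\Dm)\subset \bigcap_s \wtH^{s,\gamma}$ should follow by integrating by parts against the intertwining differential operator $\L_\gamma$ of \eqref{eq:Lgamma}, whose eigenfunctions are the $\gnk{\gamma}$, to show that smooth functions have rapidly decaying coefficients. The reverse inclusion is the delicate one: given a function whose coefficients decay faster than any power along the joint index $(n,k)$, I must produce genuine $C^k$ regularity up to the boundary for every $k$. The plan here, as the introduction indicates, is (i) to control $\|\gnk{\gamma}\|_{L^\infty}$ via Lemma~\ref{lem:linftybound}, obtained by backprojecting Gegenbauer polynomials whose sup-norms are classical, and (ii) to use the $\{\gnk{\gamma+1}\}$ basis to express $\partial_z$ and $\partial_{\zbar}$ acting on $\gnk{\gamma}$ compactly (via the identities from \cite{Wuensche2005}), so that differentiation maps the scale at level $\gamma$ to the scale at level $\gamma+1$ with a controlled index shift. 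Iterating, one bounds each $C^k$ norm of the series by a convergent sum of coefficients weighted by polynomially-growing factors, which converges precisely because the coefficients were super-polynomially small. The subtlety—and the reason the three-index family $\{\gnk{\gamma}\}_{n,k}$ must be treated jointly rather than angular-harmonic by angular-harmonic—is that the derivative identities mix the index $\gamma$, so one cannot stay within a single scale; the estimates in (i) and (ii) must be uniform in all three indices to close the argument.

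With both pillars in place the conclusion is immediate: $A_\gamma$ and $A_\gamma^{-1}$ each map $\bigcap_s \wtH^{s,\gamma}=C^\infty(\Dm)$ continuously into itself, and since they are inverse to one another on the dense span of eigenfunctions, they are inverse as maps of $C^\infty(\Dm)$, which is the desired isomorphism. I would present the injectivity/surjectivity on the algebraic level first (already essentially recorded after \eqref{eq:eigeneq}), then devote the bulk of the argument to the continuity estimates, isolating the $\bigcap_s \wtH^{s,\gamma}=C^\infty(\Dm)$ identification as a standalone lemma since it is the crux and is independent of the operator $A_\gamma$ itself.
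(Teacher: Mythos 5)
Your proposal is correct and follows essentially the same route as the paper: the same coefficient-decay scale $\wtH^{s,\gamma}$ of \eqref{eq:Sobolev}, $k$-uniform polynomial two-sided bounds on the singular values (the paper's Lemma \ref{lem:SVDasym}) making $I_0^\sharp\mu^{-2\gamma-1}I_0 d^\gamma$ and its formal inverse tame on that scale, and the crux identification $\bigcap_{s\ge 0}\wtH^{s,\gamma}=C^\infty(\Dm)$ established exactly as you outline, via the $L^\infty$ eigenfunction bounds of Lemma \ref{lem:linftybound} and the $\gamma\to\gamma+1$ derivative mapping of Lemma \ref{lem:diffmap}, combined in Lemma \ref{lem:continc}. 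The only cosmetic difference is that you conclude by letting the coefficientwise inverse act directly, whereas the paper phrases the same fact as a two-sided norm estimate together with the open mapping theorem.
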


The case $\gamma =0$ was previously established in \cite{Monard2019a}. For $\gamma>-1$, another way to interpret Theorem \ref{thm:isomorphism} is to say that 
\begin{align*}
    I_0^{\sharp}\mu^{-2\gamma-1}I_0 \colon d^\gamma C^\infty(\Dm) \to C^\infty(\Dm)
\end{align*}
is an isomorphism. In the case $\gamma = -1/2$, this result was established in \cite{Monard2017} and was shown to hold on any simple Riemannian surface, using Boutet de Monvel calculus and $\mu$-transmission spaces introduced by H\"ormander and Grubb. 

\F{

    \paragraph{Generalization to constant curvature disks.} We now show that the $C^\infty$-isomorphism property Theorem \ref{thm:isomorphism} can be shown to hold on a family of disks of constant curvature, previously studied in \cite{Mishra2019,Monard2019a,Monard2021}.

\begin{theorem}\label{thm:CCD}
    Let $(M,g)$ be a simple geodesic disk of constant curvature, and let $\gamma>-1$. Then there exists boundary defining functions $d$ for $M$ and $t$ for $\partial_+ SM$ such that the operator $I_0^\sharp t^{-2\gamma-1} I_0 d^\gamma \colon C^\infty(M)\to C^\infty(M)$ is an isomorphism.     
\end{theorem}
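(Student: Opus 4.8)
The plan is to transfer the Euclidean result of Theorem~\ref{thm:isomorphism} to the constant-curvature setting by means of an explicit intertwining diffeomorphism, rather than redoing the spectral analysis from scratch. First I would recall the structure of simple geodesic disks of constant curvature $\kappa$: up to rescaling, $\kappa\in\{-1,0,+1\}$, and such a disk can be modeled on the Euclidean disk $\Dm$ equipped with a rotationally symmetric, conformally Euclidean metric $g = c(|z|)^2\,|dz|^2$, with $c$ a smooth positive radial profile (the case $\kappa=0$ being exactly Theorem~\ref{thm:isomorphism}). The key classical fact, used in \cite{Mishra2019,Monard2019a}, is that on such symmetric simple surfaces there is a diffeomorphism $\Phi\colon \Dm\to M$ (radial reparameterization composed with the conformal factor) fixing the boundary circle and intertwining the Euclidean X-ray transform with the geodesic X-ray transform of $(M,g)$; concretely geodesics of $g$ are carried to Euclidean lines, so that $I_0^M$ pulls back to a weighted Euclidean $I_0$ with smooth, strictly positive weights in the interior. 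I would write out this intertwining precisely, tracking how the volume forms $d\Vol_g$ and the Sasaki form $d\Sigma^2$ on $\partial_+SM$ transform under $\Phi$ and its lift to the unit sphere bundle.

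The second step is to choose the boundary defining functions $d$ for $M$ and $t$ for $\partial_+SM$ compatibly. Because $\Phi$ is a diffeomorphism of $\Dm$ fixing the boundary, and because $c(|z|)$ is smooth and positive up to and including $|z|=1$, the Euclidean boundary defining function $1-|z|^2$ and the fan-beam function $\cos\alpha$ pull back/push forward to boundary defining functions for $M$ and $\partial_+SM$ respectively, differing from the ``intrinsic'' ones only by smooth positive factors. The aim is to arrange that, after conjugating by $\Phi$, the operator $I_0^\sharp t^{-2\gamma-1} I_0 d^\gamma$ on $M$ equals $P^{-1}\bigl(I_0^\sharp \mu^{-2\gamma-1} I_0 d^\gamma\bigr) Q$ on $\Dm$, where $P,Q$ are multiplication by (powers of) the smooth positive weights $c(|z|)$ and Jacobian factors coming from the change of variables. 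Since multiplication by a smooth, strictly positive, nonvanishing-up-to-the-boundary function is manifestly an isomorphism of $C^\infty(\Dm)$, and $\Phi^*$ is an isomorphism of $C^\infty$ onto $C^\infty(M)$, composing these with Theorem~\ref{thm:isomorphism} yields that $I_0^\sharp t^{-2\gamma-1} I_0 d^\gamma$ is an isomorphism of $C^\infty(M)$.

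The main obstacle, and the step deserving the most care, is verifying that the conjugation produces exactly the weight $t^{-2\gamma-1}\,\cdot\,d^\gamma$ with the correct exponents, and that all the extra factors introduced by the change of variables are \emph{smooth and nonvanishing up to the boundary} (so that they do not alter the singular boundary behavior encoded by the $\gamma$-dependent weights). This is delicate because the conformal factor and the radial reparameterization can in principle degenerate at the boundary circle, where the geodesic geometry of $M$ differs most from the Euclidean one; one must check that the singular parts $d^\gamma$ and $t^{-2\gamma-1}$ are preserved and that only smooth positive prefactors are generated. I would handle this by appealing to the explicit constant-curvature formulas (the profile $c$ and the geodesic/line correspondence are known in closed form for $\kappa=\pm1$), computing the Jacobian of $\Phi$ and the lifted map on $\partial_+SM$ near the boundary, and confirming that $\Phi$ is a boundary-fixing diffeomorphism with smooth positive boundary Jacobian. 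Once the weights and their boundary orders are matched, the isomorphism property is immediate from functoriality of $C^\infty$ under diffeomorphism together with Theorem~\ref{thm:isomorphism}.
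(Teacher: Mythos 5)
Your proposal is correct and follows essentially the same route as the paper: its proof of Theorem \ref{thm:CCD} combines Theorem \ref{thm:isomorphism} with the intertwining diffeomorphisms $\Phi$ (on points) and $\ss$ (on geodesics) from \cite{Monard2019a}, takes $d = \Phi^* d_e$, and absorbs the conformal and Jacobian factors $w$, $\ss'$ as multiplication isomorphisms of $C^\infty$. The one refinement beyond what you wrote is that the leftover smooth positive factor lands \emph{between} $I_0^\sharp$ and $I_0$ (i.e.\ on $\partial_+ SM$), where it cannot be commuted out as a multiplication on $M$, so it must be absorbed into the choice of $t$ itself --- the paper sets $t = \left(\mu\, (\ss^*\mu_e)^{2\gamma}\right)^{\frac{1}{2\gamma+1}}$ --- which is exactly the weight-matching issue your ``main obstacle'' paragraph anticipates.
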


Theorem \ref{thm:CCD} is proved in Section \ref{sec:CCDs}. In a nutshell, the proof is based on combining Theorem \ref{thm:isomorphism} with intertwining diffeomorphisms between the geometries of points and geodesics on such surfaces, and those of the Euclidean disk.

}

\subsection{Consequences \F{of Theorems \ref{thm:SVD} and \ref{thm:isomorphism}}}\label{ssec:corol}

\subsubsection{Range characterization}

A first obvious by-product of Theorems \ref{thm:SVD} and \ref{thm:isomorphism} is a range characterization in terms of moment conditions {\it \`a la} Helgason-Ludwig or Gelfand-Graev \cite{Helgason1999,Ludwig1966,GelfandGraev1960}. The proof is given in Section \ref{sec:proofrangeCharac}.

\begin{theorem}\label{thm:rangeCharac}
    A function $\psi\in \mu^{2\gamma+1}C_{\alpha,+,+}^\infty(\partial_+ S\Dm)$ (defined in \eqref{eq:Calpha2} below) belongs to the range of $I_0 d^\gamma$ on $C^\infty(\Dm)$ if and only if 
    \begin{align}
	\langle \psi, \psink{\gamma} \rangle_{L^2(\partial_+ S\Dm, \mu^{-2\gamma})} = 0 \qquad \text{for all} \quad \qquad n\ge 0, \quad k\in \Zm\backslash \{0, \dots, n\}. 
	\label{eq:HLGG}
    \end{align}
\end{theorem}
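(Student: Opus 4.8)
The plan is to read the characterization off the singular system of Theorem \ref{thm:SVD} together with the isomorphism property of Theorem \ref{thm:isomorphism}, exploiting the Hilbert-space relation $\overline{\mathrm{Ran}(I_0 d^\gamma)} = \ker(I_0^\sharp \mu^{-2\gamma-1})^\perp$ inside $L^2(\partial_+ S\Dm, \mu^{-2\gamma})$. First I would record that every $\psi \in \mu^{2\gamma+1} C_{\alpha,+,+}^\infty(\partial_+ S\Dm)$ lies in $L^2(\partial_+ S\Dm, \mu^{-2\gamma})$: writing $\psi = \mu^{2\gamma+1} h$ with $h$ bounded, one has $\int |h|^2 \mu^{2\gamma+2}\, d\Sigma^2 < \infty$ since $2\gamma+2>0$ for $\gamma>-1$. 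Likewise $I_0 d^\gamma f \in L^2(\partial_+ S\Dm, \mu^{-2\gamma})$ for $f \in C^\infty(\Dm) \subset L^2(\Dm, d^\gamma)$ by boundedness of \eqref{eq:weightedXray}. Since Theorem \ref{thm:SVD}(1) identifies $\ker(I_0^\sharp \mu^{-2\gamma-1})$ as the closed span of $\{\psink{\gamma} : n\ge 0,\ k \in \Zm\setminus\{0,\dots,n\}\}$, the moment conditions \eqref{eq:HLGG} say \emph{precisely} that $\psi \perp \ker(I_0^\sharp \mu^{-2\gamma-1})$, i.e. $\psi \in \ker(I_0^\sharp \mu^{-2\gamma-1})^\perp$.

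For the necessity, suppose $\psi = I_0 d^\gamma f$ with $f \in C^\infty(\Dm)$. For each $n\ge 0$ and $k \in \Zm\setminus\{0,\dots,n\}$, moving the operator across the pairing gives
\begin{align*}
\langle \psi, \psink{\gamma}\rangle_{L^2(\mu^{-2\gamma})} = \langle f, I_0^\sharp \mu^{-2\gamma-1}\psink{\gamma}\rangle_{L^2(d^\gamma)} = 0,
\end{align*}
because $\psink{\gamma} \in \ker(I_0^\sharp \mu^{-2\gamma-1})$ for such $(n,k)$ by Theorem \ref{thm:SVD}(1). Hence \eqref{eq:HLGG} holds.

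For the sufficiency, assume \eqref{eq:HLGG}. The one analytic input I would invoke is that $I_0^\sharp \mu^{-2\gamma-1}$ maps $\mu^{2\gamma+1} C_{\alpha,+,+}^\infty(\partial_+ S\Dm)$ into $C^\infty(\Dm)$ — this is exactly what the space $C_{\alpha,+,+}^\infty$ is designed to guarantee. Granting this, set $h := I_0^\sharp \mu^{-2\gamma-1}\psi \in C^\infty(\Dm)$. By Theorem \ref{thm:isomorphism} the operator $I_0^\sharp \mu^{-2\gamma-1} I_0 d^\gamma$ is a bijection of $C^\infty(\Dm)$, so there is a unique $f \in C^\infty(\Dm)$ with $I_0^\sharp \mu^{-2\gamma-1} I_0 d^\gamma f = h = I_0^\sharp \mu^{-2\gamma-1}\psi$. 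Then $g := I_0 d^\gamma f - \psi$ satisfies $I_0^\sharp \mu^{-2\gamma-1} g = 0$, so $g \in \ker(I_0^\sharp \mu^{-2\gamma-1})$. On the other hand $I_0 d^\gamma f \in \overline{\mathrm{Ran}(I_0 d^\gamma)} = \ker(I_0^\sharp \mu^{-2\gamma-1})^\perp$, and $\psi \in \ker(I_0^\sharp \mu^{-2\gamma-1})^\perp$ by the first paragraph, so $g$ also lies in $\ker(I_0^\sharp \mu^{-2\gamma-1})^\perp$. Therefore $g \in \ker \cap \ker^\perp = \{0\}$, giving $\psi = I_0 d^\gamma f$ with $f \in C^\infty(\Dm)$, as desired.

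I expect the main obstacle to be the mapping property $I_0^\sharp \mu^{-2\gamma-1}\colon \mu^{2\gamma+1} C_{\alpha,+,+}^\infty \to C^\infty(\Dm)$ that produces $h$: one must certify that the backprojection of a smooth, suitably boundary-behaved density is genuinely smooth up to $\partial\Dm$, creating no singularity at the boundary components $\alpha = \pm\pi/2$. I would derive this from the forward/backprojection results already underlying the SVD computation (in particular the fact that each $\gnk{\gamma} = I_0^\sharp \mu^{-2\gamma-1}\psink{\gamma}$ is a polynomial), combined with continuity in the relevant Fréchet topologies to pass from finite combinations of the $\psink{\gamma}$ to a general element of $\mu^{2\gamma+1} C_{\alpha,+,+}^\infty$. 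A secondary point is completeness of $\{\psink{\gamma}\}_{n,k}$ in $L^2(\partial_+ S\Dm,\mu^{-2\gamma})$, which ensures orthogonality to the spanning set of the kernel truly places $\psi$ in $\ker^\perp$; this is supplied by the SVD of Theorem \ref{thm:SVD}.
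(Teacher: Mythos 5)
Your proof is correct, but it takes a genuinely different route from the paper's. The paper argues on coefficients: it expands $\psi = \sum_{n,k} a_{n,k}\,\widehat{\psink{\gamma}}$, uses the stability of $\mu^{2\gamma+1}C^\infty_{\alpha,+,+}$ under $\T_\gamma$ and $\partial_\beta$ to obtain the rapid decay $\sum_{n,k}(n+1+\gamma)^{4p}(n-2k)^{2q}|a_{n,k}|^2<\infty$, observes that \eqref{eq:HLGG} is equivalent to $a_{n,k}=0$ for $k\notin\{0,\dots,n\}$, and then writes the preimage \emph{explicitly} as $f = \sum_{n\ge 0,\,0\le k\le n}(a_{n,k}/\signk{\gamma})\,\widehat{\gnk{\gamma}}$, verifying $f\in C^\infty(\Dm)$ by combining this decay with the singular-value lower bounds of Lemma \ref{lem:SVDasym} and the identification $\cap_s\wtH^{s,\gamma}(\Dm)=C^\infty(\Dm)$ of Lemma \ref{lem:continc}.(c). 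You instead use Theorem \ref{thm:isomorphism} as a black box: you produce $f$ by solving $I_0^\sharp\mu^{-2\gamma-1}I_0 d^\gamma f = I_0^\sharp\mu^{-2\gamma-1}\psi$ in $C^\infty(\Dm)$, and then identify $I_0 d^\gamma f$ with $\psi$ via the Hilbert-space duality $\overline{\mathrm{Ran}(I_0 d^\gamma)}=\ker(I_0^\sharp\mu^{-2\gamma-1})^\perp$ combined with the kernel description of Theorem \ref{thm:SVD}(1), so that the difference lies in $\ker\cap\ker^\perp=\{0\}$. Your version buys a shorter, non-constructive argument with no coefficient bookkeeping; the paper's version buys an explicit Zernike expansion of the preimage and relies only on the auxiliary lemmas behind Theorem \ref{thm:isomorphism} rather than on the full isomorphism statement. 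One remark: the mapping property you flag as the main obstacle, $I_0^\sharp\mu^{-2\gamma-1}\bigl(\mu^{2\gamma+1}C^\infty_{\alpha,+,+}(\partial_+S\Dm)\bigr)\subset C^\infty(\Dm)$, does not require the Fr\'echet-density argument you sketch; it is exactly Lemma \ref{lem:intertwiners}(1) of the paper, and follows at once from the identity $I_0^\sharp\mu^{-2\gamma-1}(\mu^{2\gamma+1}h)=I_0^\sharp h$ together with the Pestov--Uhlmann fact $I_0^\sharp\bigl(C^\infty_{\alpha,+,+}(\partial_+S\Dm)\bigr)\subset C^\infty(\Dm)$ already cited there, so your argument closes without any new analytic input.
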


In the case $\gamma=0$, $I_0$ admits further characterizations in terms of operators $P_-$ (see \cite{Pestov2004}) or $C_-$ (see \cite{Monard2015a,Monard2019a}), ``naturally'' defined in terms of the scattering relation and the fiberwise Hilbert transform. It is unclear at the moment whether analogous operators exist when $\gamma\ne 0$. 

\begin{remark}
    One could also formulate similar \F{range characterizations} on Sobolev-type spaces. \F{However,} the fact that \F{the} singular values of $I_0 d^\gamma$, \F{$\{\sigma_{n,k}^\gamma, n\ge 0,\ 0\le k\le n\}$ as defined in \eqref{eq:SVD2}}, depend \F{non-trivially} on \F{the parameter} $k$ \F{unless $\gamma=0$} makes the design of Sobolev spaces look less natural than in the case $\gamma=0$. Indeed, with $\wtH^{s,\gamma}(\Dm)$ defined in \eqref{eq:Sobolev}, in view of Theorem \ref{thm:SVD}, the image $I_0 d^\gamma (\wtH^{s,\gamma}(\Dm))$ can be easily described as 
    \begin{align*}
	I_0 d^\gamma (\wtH^{s,\gamma}(\Dm)) = \left\{ \sum_{n\ge 0} \sum_{k=0}^n a_{n,k} \signk{\gamma} \widehat{\psink{\gamma}}, \qquad \sum_{n,k} (n+1+\gamma)^{2s} |a_{n,k}|^2 <\infty \right\}.	
    \end{align*}    
    However this right-hand side could neither be accurately captured in the isotropic Sobolev scale on $\partial_+ S\Dm$, nor in a Sobolev scale constructed as domain spaces of $\T_\gamma$ (defined in \eqref{eq:Tgamma} below) in a manner similar to \cite{Monard2019a}.    
\end{remark}

\subsubsection{Functional relations}

\F{For any $\gamma>-1$, the above results allow to express the normal operator $I_0^{\sharp}\mu^{-2\gamma-1}I_0d^{\gamma}$ as a function of two distinguished commuting self-adjoint differential operators.} Only in the case $\gamma = 0$ can this be reduced to a single differential operator as in \cite{Monard2019a}. Indeed, with the operator $\L_\gamma$ defined Eq. \eqref{eq:Lgamma} and $D_\omega := \frac{1}{i} \partial_\omega$ (with $\omega$ the argument coordinate on $\Dm$), we have 
\begin{align}
    \L_\gamma \gnk{\gamma} = (n+1+\gamma)^2 \gnk{\gamma}, \qquad D_\omega \gnk{\gamma} = (n-2k) \gnk{\gamma}, \qquad n\ge 0, \qquad 0\le k\le n.
\end{align}
Let us then define $\D_\gamma := \L_\gamma^{1/2} - \gamma-1$, such that $\D_\gamma \gnk{\gamma} = n \gnk{\gamma}$. Then \eqref{eq:SVD1}-\eqref{eq:SVD2} readily give us the functional relations
\begin{align}
    \begin{split}
	I_0^\sharp \mu^{-2\gamma-1} I_0 d^\gamma &= 2^{2\gamma+2} \pi \frac{\Gamma(\D_\gamma+1)}{\Gamma(\D_\gamma+2\gamma+2)} \frac{\Gamma( (\D_\gamma+D_\omega)/2 + \gamma+1)}{\Gamma((\D_\gamma+D_\omega)/2 +1)}\frac{\Gamma( (\D_\gamma-D_\omega)/2 + \gamma+1)}{\Gamma((\D_\gamma-D_\omega)/2 +1)} \\
	&= 2^{2\gamma+2} \pi (\D_\gamma +1)^{-1} \frac{B((\D_\gamma+D_\omega)/2+1+\gamma, (\D_\gamma-D_\omega)/2+1+\gamma)}{B((\D_\gamma+D_\omega)/2+1, (\D_\gamma-D_\omega)/2+1)}.  
    \end{split}    
    \label{eq:funcrel}
\end{align}
Similar functional relations have been derived on symmetric spaces \cite{Helgason2010} or surfaces of constant negative curvature \cite{Guillarmou2017}. In most if not all previous cases where this is possible, integral-geometric operators are often expressed as functions of a {\em single} elliptic differential operator.

\subsubsection*{Outline} 

The remainder of the article is organized as follows. We first recall notation associated with generalized disk polynomials in Section \ref{sec:prelim2}, and introduce two important families of differential operators $\L_\gamma$ in Sec. \ref{sec:Lgamma}, and $\T_\gamma$ in Sec. \ref{sec:Tgamma}. We then derive the SVD of $I_0 d^\gamma$ (Theorem \ref{thm:SVD}) in Section \ref{sec:proofThm1}, first providing a rough outline in Section \ref{sec:IDOs} based on the method of intertwining differential operators, then proving all auxiliary lemmas in Section \ref{sec:lemmas1}. We then prove the $C^\infty$-isomorphism property on the Euclidean disk (Theorem \ref{thm:isomorphism}) in Section \ref{sec:isomorphism}, also providing an outline in Section \ref{sec:outline2} before proving all auxiliary lemmas in Section \ref{sec:lemmas2}, as well as Theorem \ref{thm:rangeCharac} in Section \ref{sec:proofrangeCharac}. Finally, we provide the $C^\infty$-isomorphism property for geodesic disks of constant curvature in Section \ref{sec:CCDs}.

\section{Disk polynomials and some important differential operators}\label{sec:prelim2}

\subsection{Disk polynomials, or generalized Zernike polynomials}\label{sec:diskpoly}

Our choice of notation is a combination of \cite{Kazantsev2004} which is well-adapted to transport problems and Cauchy-Riemann systems, and \cite{Wuensche2005}, which compiles several important facts for disk polynomials. 

Following \cite{Wuensche2005}, for $\gamma\in \Rm$ and $m,\ell\in \Nm_0$, we introduce 
\begin{align}
    P_{m,\ell}^\gamma(z,\zbar) = \frac{\ell! \gamma!}{(\ell+\gamma)!} z^{m-\ell} P_\ell^{(\gamma,m-\ell)} (2|z|^2-1) = \frac{m! \gamma!}{(m+\gamma)!} \zbar^{\ell-m} P_m^{(\gamma,\ell-m)} (2|z|^2-1),
    \label{eq:Pml}
\end{align}
of degree $m+\ell$, where $P_n^{(a,b)}$ denotes the Jacobi polynomials. This family is orthogonal on $L^2(\Dm, (1-|z|^2)^\gamma)$. Below, to make it similar to \cite{Kazantsev2004,Monard2019a}, we will only look at it through the indexing 
\begin{align*}
    (n,k) \to P_{n-k,k}^\gamma, \qquad n\in \Nm_0, \qquad 0\le k\le n.
\end{align*}
Then $n$ plays the role of the degree, $P^\gamma_{n,0}$ is proportional to $z^n$, the polar dependency of $P_{n-k,k}^\gamma$ is $e^{i(n-2k)\omega}$, and the action $(n,k)\to (n,k+1)$ is a form of Beurling transform. One has 
\begin{align*}
    P^\gamma_{n-k,k}(z,\zbar) = p_{n-k,k}^\gamma z^{n-k}\zbar^k + l.o.t, 
\end{align*}
and the following quantities are tabulated: 
\begin{align}
    \|P_{n-k,k}^\gamma\|^2_{L^2(\Dm, d^\gamma)} = \frac{\pi}{n+\gamma+1} \frac{(n-k)! \gamma! k! \gamma !}{ (k+\gamma)! (n-k+\gamma)!}, \qquad p_{n-k,k}^\gamma = \frac{\gamma! (n+\gamma)!}{(n-k+\gamma)! (k+\gamma) !},
    \label{eq:norms}
\end{align}
where for $x>-1$, $x!$ is short for $\Gamma(x+1)$. Upon defining in polar coordinates $(z=\rho e^{i\omega})$
\begin{align}
    \L_\gamma := -(1-\rho^2) \partial_\rho^2 - (\rho^{-1}-(3+2\gamma)\rho)\partial_\rho -\rho^{-2} \partial_\omega^2 + (\gamma+1)^2 id, 	
    \label{eq:Lgamma}
\end{align}
a second-order differential operator with smooth coefficients on $\Dm$, we deduce from \cite[Eq. (4.3) and (4.6)]{Wuensche2005} the eigenequations
\begin{align}
    \partial_\omega P_{n-k,k} = i(n-2k)P_{n-k,k}, \quad \L_\gamma P_{n-k,k} = (n+1+\gamma)^2 P_{n-k,k}, \quad n\in \Nm_0, \quad 0\le k\le n.
    \label{eq:eigenequations}
\end{align}

\subsection{A family of degenerate elliptic operators on $\Dm$}\label{sec:Lgamma}

An immediate computation also gives the following expression in polar coordinates
\begin{align}
    \L_\gamma = - \rho^{-1} (1-\rho^2)^{-\gamma} \partial_\rho \left( \rho (1-\rho^2)^{\gamma+1} \partial_\rho \right)  - \rho^{-2}\partial_\omega^2 + (1+\gamma)^2 id. 
    \label{eq:Lgamma2}
\end{align}
This last expression makes it easy to check that $(\L_{\gamma},C^\infty(\Dm))$ is a symmetric, positive, unbounded operator on $L^2(\Dm, d^\gamma)$. Indeed, an integration by parts with vanishing boundary term gives, for any $u\in C^\infty(\Dm)$, 
\begin{align}
    (\L_\gamma u, u)_{L^2(\Dm, d^\gamma)} = \int_\Dm \left[ d |\partial_\rho u|^2 + \rho^{-2} |\partial_\omega u|^2 + (1+\gamma)^2 |u|^2\right] d^{\gamma} \rho\ d\rho\ d\omega \ge (1+\gamma)^2 \|u\|_{L^2(\Dm, d^\gamma)}^2.
    \label{eq:IBPL}
\end{align}

\begin{theorem} \label{thm:esssa}
    The operator $(\L_{\gamma},C^\infty(\Dm))$ densely defined on $L^2(\Dm,d^\gamma)$ is essentially self-adjoint.     
\end{theorem}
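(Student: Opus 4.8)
The plan is to exploit the fact that we already possess a complete orthogonal system of eigenfunctions lying inside the domain $C^\infty(\Dm)$, which reduces essential self-adjointness to an elementary density argument. Recall the standard criterion: a densely defined symmetric operator $A$ is essentially self-adjoint if and only if the two ranges $\text{Ran}(A+i)$ and $\text{Ran}(A-i)$ are dense (equivalently $\ker(A^* \pm i)=\{0\}$). Since \eqref{eq:IBPL} already shows that $(\L_\gamma, C^\infty(\Dm))$ is symmetric (and in fact bounded below by $(1+\gamma)^2>0$), it suffices to verify the density of these two ranges.

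First I would record the three ingredients furnished by the disk-polynomial calculus: (a) each $P_{n-k,k}^\gamma$ belongs to $C^\infty(\Dm)$, the domain of $\L_\gamma$, being a polynomial in $z,\zbar$; (b) by \eqref{eq:eigenequations} it is an eigenfunction, $\L_\gamma P_{n-k,k}^\gamma = (n+1+\gamma)^2 P_{n-k,k}^\gamma$, with real (indeed positive) eigenvalue; and (c) the family $\{P_{n-k,k}^\gamma : n\ge 0,\ 0\le k\le n\}$ is orthogonal and complete in $L^2(\Dm, d^\gamma)$. The completeness in (c) follows because, as $(n,k)$ vary, the leading terms $p_{n-k,k}^\gamma z^{n-k}\zbar^k$ (with $p_{n-k,k}^\gamma\ne 0$ by \eqref{eq:norms}) make $\{P_{n-k,k}^\gamma\}$ triangular with respect to the monomials $z^{m}\zbar^{\ell}$, so their span is the algebra of all polynomials in $z,\zbar$; this algebra separates points and is closed under conjugation, hence is dense in $C(\Dm)$ by Stone--Weierstrass, and therefore dense in $L^2(\Dm, d^\gamma)$ since $d^\gamma$ is integrable for $\gamma>-1$.

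With these in hand the density of the ranges is immediate. Suppose $u\in L^2(\Dm, d^\gamma)$ satisfies $u\perp \text{Ran}(\L_\gamma+i)$, i.e. $\langle u, (\L_\gamma+i)v\rangle_{L^2(\Dm,d^\gamma)}=0$ for all $v\in C^\infty(\Dm)$. Testing against each basis element and using the eigenequation,
\begin{align*}
    0 = \langle u, (\L_\gamma + i) P_{n-k,k}^\gamma \rangle_{L^2(\Dm,d^\gamma)} = \left((n+1+\gamma)^2 + i\right)\, \langle u, P_{n-k,k}^\gamma \rangle_{L^2(\Dm,d^\gamma)}.
\end{align*}
Since the scalar $(n+1+\gamma)^2+i$ is nonzero, we get $\langle u, P_{n-k,k}^\gamma\rangle_{L^2(\Dm,d^\gamma)}=0$ for all admissible $(n,k)$, whence $u=0$ by completeness. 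The identical computation with $-i$ in place of $+i$ shows $\text{Ran}(\L_\gamma-i)$ is dense, and the criterion yields essential self-adjointness.

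This is a soft functional-analytic consequence of the eigenfunction theory already developed, so I do not anticipate a genuine analytic obstacle; the one point deserving care is the completeness statement (c), which is precisely what upgrades $u\perp\text{span}\{P_{n-k,k}^\gamma\}$ to $u=0$. An alternative but far more laborious route would bypass the eigenbasis, separating into angular modes $e^{i(n-2k)\omega}$ and carrying out a limit-point/limit-circle (Weyl) analysis of the resulting singular radial Sturm--Liouville operator at the endpoints $\rho=0$ and $\rho=1$; I would avoid this in favour of the eigenbasis argument above.
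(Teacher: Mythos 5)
Your proof is correct, but it takes a genuinely different route from the paper's. The paper invokes the criterion (from Helffer's book) that a positive symmetric operator is essentially self-adjoint as soon as $\mathrm{Ran}(\L_\gamma+b)$ is dense for some $b>0$, and establishes this density \emph{constructively}: using the polar-coordinate form \eqref{eq:Lgamma}, it observes that $\L_\gamma+b$ maps each finite-dimensional space $\mathcal{P}_n$ of polynomials of degree $\le n$ into itself, is injective there by the lower bound \eqref{eq:IBPL}, hence is a linear isomorphism of $\mathcal{P}_n$; so every polynomial lies in the range. You instead use the deficiency-range criterion ($\mathrm{Ran}(\L_\gamma\pm i)$ dense) and verify it \emph{by duality}, pairing a putative orthogonal vector against the eigenbasis $P^\gamma_{n-k,k}$ via the eigenequation \eqref{eq:eigenequations} and completeness. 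The trade-off: the paper's argument needs no information about eigenfunctions or eigenvalues at all — only the structural fact that $\L_\gamma$ preserves polynomial degree — whereas yours leans on the Wünsche identities \eqref{eq:eigenequations} and on completeness of the disk polynomials, which you correctly reduce (via the nonvanishing leading coefficients in \eqref{eq:norms}, triangularity against monomials, and Stone--Weierstrass together with finiteness of the measure $d^\gamma\,|dz|^2$ for $\gamma>-1$) to density of polynomials in $L^2(\Dm,d^\gamma)$; note that this same polynomial density is also the implicit last step of the paper's proof. In fact, your argument is precisely the one the paper gestures at in the remark immediately following the theorem (``Theorem \ref{thm:esssa} is of course no surprise, since the eigenequation \eqref{eq:eigenequations} and the fact that the family \eqref{eq:Pml} is a complete set\dots''), so you have fleshed out the alternative the authors chose not to write down; a side benefit is that your route exhibits the full spectral resolution of the closure at the same time. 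The only point to polish is cosmetic: depending on the sesquilinearity convention the scalar in your display is $(n+1+\gamma)^2-i$ rather than $(n+1+\gamma)^2+i$, which of course changes nothing since both are nonzero.
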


\begin{proof}
    Using \cite[Theorem 9.8]{Helffer2013}, is suffices to show that the range of $\L_{\gamma} + b$ is dense in $L^2(\Dm,d^\gamma)$ for some $b>0$. We prove this by showing that polynomials of arbitrary order are in the range of $\L_\gamma + b$ for any $b>0$. In fact, by \eqref{eq:IBPL} we have 
    \begin{align*}
	((\L_{\gamma}+b)u,u)_{L^2(\Dm, d^\gamma)} \ge ((1+\gamma)^2+b)\|u\|_{L^2(\Dm, d^\gamma)},	
    \end{align*}
    and hence $\L_{\gamma}+b$ is injective on $C^{\infty}(\Dm)$ for any $b>0$ (in fact, for any $b>-(1+\gamma)^2$). As such, if we let
    \[\mathcal{P}_n = \{\text{polynomials of degree at most }n\text{ on }\Dm\} = \text{span }\{z^\alpha\zbar^\beta\,:\,\alpha,\beta\in\Nm_0,\ \alpha+\beta\le n\},\]
    it suffices to show that $\L_{\gamma}+b$ maps $\mathcal{P}_n$ into $\mathcal{P}_n$ for each $n\in\mathbb{N}$, since the injectivity of $\L_{\gamma}+b$ and the finite-dimensionality of each $\mathcal{P}_n$ would then imply that $\L_{\gamma}+b$ is a linear isomorphism on each $\mathcal{P}_n$. Recalling the formula
    \[\Delta=\partial_{\rho}^2+\rho^{-1}\partial_{\rho}+\rho^{-2}\partial_{\omega}^2,\]
    of the Laplacian in polar coordinates, it follows from \eqref{eq:Lgamma} that
    \[\L_{\gamma}+b = -(1-\rho^2)\Delta+(3+2\gamma)\rho\partial_{\rho}-\partial_{\omega}^2+((\gamma+1)^2+b)id.\]
    For $f\in\mathcal{P}_n$, we have $\Delta f\in \mathcal{P}_{n-2}$, and hence $-(1-\rho^2)\Delta f\in\mathcal{P}_n$. Since $\rho\partial_{\rho}$, $\partial_{\omega}^2$, and $id$ also map $\mathcal{P}_n$ into itself, it follows that $\L_{\gamma}+b$ maps $\mathcal{P}_n$ into itself, as desired.
\end{proof}

Theorem \ref{thm:esssa} is of course no surprise, since the eigenequation \eqref{eq:eigenequations} and the fact that the family \eqref{eq:Pml} is a complete set in $L^2(\Dm,d^\gamma)$ gives a full spectral decomposition. 

\subsection{A family of differential operators on $\partial_+ S\Dm$} \label{sec:Tgamma}

\paragraph{The operator $T$ and symmetries in $\partial_+ S\Dm$.} On $\partial S\Dm$, recall the operator $T = \partial_\beta - \partial_\alpha$, as well as the scattering and antipodal scattering relations
\begin{align}
    \SS(\beta,\alpha) := (\beta+\pi+2\alpha, \pi-\alpha), \qquad \SS_A(\beta,\alpha) := (\beta+\pi+2\alpha, -\alpha),
    \label{eq:Scatrel}
\end{align}
and let $A_{+/-}$ the extension operators by $\SS$-oddness/evenness from $\partial_+ S\Dm$ to $\partial S\Dm$. Natural domains of definition for $A_\pm$ are
\begin{align}
    C_{\alpha,\pm}^\infty(\partial_+ S\Dm) = \{ f\in C^\infty(\partial_+ S\Dm):\ A_\pm f \in C^\infty(\partial S\Dm) \},
    \label{eq:Calpha}
\end{align}
previously studied in \cite{Pestov2005,Mishra2019,Mazzeo2021}. One encodes one more symmetry by splitting each space into
\begin{align}
    \begin{split}
	C^\infty_{\alpha,\pm} (\partial_+ S\Dm) &= C^\infty_{\alpha,\pm,+} (\partial_+ S\Dm) \oplus C^\infty_{\alpha,\pm,-} (\partial_+ S\Dm), \qquad \text{where} \\
	C_{\alpha,\sigma_1,\sigma_2}^\infty(\partial_+ S\Dm) &:= \{ f\in C^\infty_{\alpha,\sigma_1}, \quad \SS_A^* f = \sigma_2 f   \}, \qquad \sigma_1,\sigma_2 \in \{+,-\}.	
    \end{split}    
    \label{eq:Calpha2}
\end{align}
Now, we have the obvious relations
\begin{align*}
    T \circ \SS^* = - \SS^* \circ T, \qquad T\circ \SS_A^* = - \SS_A^* \circ T \qquad \text{on } C^\infty(\partial S\Dm),
\end{align*}
and this immediately implies 
\begin{align}
    T(C_{\alpha,\sigma_1,\sigma_2}^\infty(\partial_+ S\Dm)) \subset C_{\alpha,-\sigma_1,-\sigma_2}^\infty(\partial_+ S\Dm), \qquad \sigma_1,\sigma_2 \in \{+,-\}.
    \label{eq:propT}
\end{align}
Since $\mu\in C_{\alpha,-,+}^\infty(\partial_+ S\Dm)$, we also have the property (see also \cite[Lemma 4.3]{Mazzeo2021} with, in the present case, $\mu = \tau/2$)
\begin{align}
    \mu C_{\alpha,+,\pm}^\infty(\partial_+ S\Dm) = C^\infty_{\alpha,-,\pm}(\partial_+ S\Dm). 
    \label{eq:propmu}
\end{align}

\paragraph{The operator $\T_\gamma$.} Now for $\gamma\in \Rm$, define 
\begin{align}
    \T_\gamma := - \mu^{2\gamma} T \mu^{-2\gamma} T + \gamma^2\ id,
    \label{eq:Tgamma}
\end{align}
acting unboundedly on $L^2(\partial_+ S\Dm, \mu^{-2\gamma})$. We claim that 
\begin{align}
    \T_\gamma( \mu^{2\gamma+1} C_{\alpha,+,+}^\infty(\partial_+ S\Dm)) \subset \mu^{2\gamma+1} C_{\alpha,+,+}^\infty(\partial_+ S\Dm).
    \label{eq:claimTgamma}
\end{align}
Together with the fact that $\mu^{2\gamma+1} C_{\alpha,+,+}^\infty(\partial_+ S\Dm)$ is dense in 
\begin{align}
    L_+^2(\partial_+ S\Dm, \mu^{-2\gamma}) := L_+^2(\partial_+ S\Dm, \mu^{-2\gamma}) \cap \ker (Id - \SS_A),
    \label{eq:L2plus}
\end{align}
Equation \eqref{eq:claimTgamma} allows us to take $\mu^{2\gamma+1} C_{\alpha,+,+}^\infty(\partial_+ S\Dm)$ as domain for $\T_\gamma$.

\begin{proof}[Proof of \eqref{eq:claimTgamma}]
    Considering the identity
    \begin{align}
	\mu^{-2\gamma-1} \T_\gamma \mu^{2\gamma+1} = - T^2 - 2(\gamma+1) \tan\alpha T + (\gamma+1)^2 id,
	\label{eq:Tgammaid}
    \end{align}
    to prove \eqref{eq:claimTgamma}, it is enough to show that (i) $T^2 (C_{\alpha,+,+}^\infty(\partial_+ S\Dm))) \subset C_{\alpha,+,+}^\infty(\partial_+ S\Dm))$, and that (ii) $\tan\alpha T (C_{\alpha,+,+}^\infty(\partial_+ S\Dm))) \subset C_{\alpha,+,+}^\infty(\partial_+ S\Dm))$. The statement (i) follows from applying \eqref{eq:propT} twice. As for (ii): with $\tan \alpha = \frac{T\mu}{\mu}$, and with $T\mu \in C_{\alpha,+,-}^\infty$, we have 
    \begin{align*}
	\tan\alpha T (C_{\alpha,+,+}^\infty) \stackrel{\eqref{eq:propT}}{\subset} \frac{T\mu}{\mu} C_{\alpha,-,-}^\infty \stackrel{\eqref{eq:propmu}}{=} T\mu C_{\alpha,+,-}^\infty \subset C_{\alpha,+,+}^\infty,
    \end{align*}
    hence the claim holds.    
\end{proof}

Now for $\phi,\psi\in \mu^{2\gamma+1} C_{\alpha,+,+}^\infty$, which we write $\phi = \mu^{2\gamma+1} \wtphi$ for some $\wtphi\in C_{\alpha,+,+}^\infty$ and similar for $\psi$, we first write
\begin{align*}
    \T_\gamma (\mu^{2\gamma+1}\wtphi) &\!\!\stackrel{\eqref{eq:Tgammaid}}{=} \mu^{2\gamma+1} (- T^2 - 2(\gamma+1) \tan\alpha T + (\gamma+1)^2) \wtphi \\
    &= -\mu^{-1} T(\mu^{2(\gamma+1)} T) \wtphi + (\gamma+1)^2 \mu^{2\gamma+1} \wtphi,
\end{align*} 
and hence
\begin{align}
    \left\langle \T_\gamma \phi, \psi \right\rangle_{\mu^{-2\gamma}} &= \int_{\partial_+ S\Dm} (-T (\mu^{2(\gamma+1)} T\wtphi) \overline{\wtpsi} + (\gamma+1)^2 \mu ^{2(\gamma+1)} \wtphi \overline{\wtpsi} )\ d\beta\ d\alpha \nonumber \\
    &= \left\langle T\wtphi, T\wtpsi \right\rangle_{L^2_+(\partial_+ S\Dm, \mu^{2(\gamma+1)})} + (\gamma+1)^2 \left\langle \wtphi, \wtpsi \right\rangle_{L^2_+(\partial_+ S\Dm, \mu^{2(\gamma+1)})},
    \label{eq:IBPTgamma}
\end{align}
where we have used that the integration by parts has no boundary term since for $\gamma>-1$, $\mu^{2(\gamma+1)}=0$ on $\partial_0 SM$, and all other terms are smooth. We conclude that for $\gamma>-1$, the operator $(\T_\gamma, \mu^{2\gamma+1} C_{\alpha,+,+}^\infty(\partial_+ S\Dm))$ is also symmetric and positive on $L_+^2(\partial_+ S\Dm, \mu^{-2\gamma})$ defined in \eqref{eq:L2plus}.

Similarly to Theorem \ref{thm:esssa} for $\L_\gamma$, we have 
\begin{theorem}\label{thm:esssa2}
    The operator $(\T_\gamma, \mu^{2\gamma+1} C_{\alpha,+,+}^\infty(\partial_+ S\Dm))$ densely defined on $L_+^2(\partial_+ S\Dm, \mu^{-2\gamma})$ is essentially self-adjoint.
\end{theorem}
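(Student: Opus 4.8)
The plan is to mirror the proof of Theorem \ref{thm:esssa}. Since we have already shown in \eqref{eq:IBPTgamma} that $(\T_\gamma, \mu^{2\gamma+1}C_{\alpha,+,+}^\infty)$ is symmetric and positive, with $\langle \T_\gamma \phi, \phi\rangle_{\mu^{-2\gamma}} \ge (\gamma+1)^2 \|\phi\|_{\mu^{-2\gamma}}^2$, it is semibounded below, so by \cite[Theorem 9.8]{Helffer2013} essential self-adjointness will follow once I show that the range of $\T_\gamma + b$ is dense in $L_+^2(\partial_+ S\Dm, \mu^{-2\gamma})$ for some $b>0$. The positivity bound also shows $\T_\gamma + b$ is injective on the domain for every $b>0$ (indeed for $b>-(\gamma+1)^2$), which plays the same role as the injectivity of $\L_\gamma + b$ on $\mathcal{P}_n$ did before.

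The heart of the argument is to exhibit an explicit eigenbasis inside the domain. I would use the conjugated identity \eqref{eq:Tgammaid}, namely $\mu^{-2\gamma-1}\T_\gamma\mu^{2\gamma+1} = -T^2 - 2(\gamma+1)\tan\alpha\, T + (\gamma+1)^2\, id$, applied to the functions $\wtpsi_{n,k} = \frac{(-1)^n}{2\pi} e^{i(n-2k)(\beta+\alpha)} L_n^\gamma(\sin\alpha)$, so that $\psink{\gamma} = \mu^{2\gamma+1}\wtpsi_{n,k}$ from \eqref{eq:psinkgamma}. Writing $m = n-2k$ and observing that $T = \partial_\beta - \partial_\alpha$ acts on any $e^{im(\beta+\alpha)} g(\alpha)$ by $T(e^{im(\beta+\alpha)} g) = -e^{im(\beta+\alpha)} g'$ (the two $im$-contributions cancel), the operator reduces to the one-variable expression $e^{im(\beta+\alpha)}\big[-g'' + 2(\gamma+1)\tan\alpha\, g' + (\gamma+1)^2 g\big]$. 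Substituting $g(\alpha) = L_n^\gamma(\sin\alpha)$ and using that $L_n^\gamma$, being orthogonal for $(1-x^2)^{\gamma+1/2}$, is an ultraspherical polynomial satisfying $-(1-x^2)(L_n^\gamma)'' + (2\gamma+3)x (L_n^\gamma)' = n(n+2\gamma+2) L_n^\gamma$, I obtain the eigenequation
\[
\T_\gamma \psink{\gamma} = \big(n(n+2\gamma+2) + (\gamma+1)^2\big)\psink{\gamma} = (n+\gamma+1)^2\, \psink{\gamma}, \qquad n\ge 0,\ k\in\Zm.
\]
Crucially, this computation uses only the form of $\wtpsi_{n,k}$ and the Gegenbauer ODE, so it holds for \emph{all} $k\in\Zm$, not merely $0\le k\le n$.

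With the eigenequation in hand the conclusion is immediate: for any $b>0$ the eigenvalue $(n+\gamma+1)^2 + b$ is strictly positive, so $\psink{\gamma} = (\T_\gamma + b)\big((n+\gamma+1)^2 + b\big)^{-1}\psink{\gamma}$ exhibits each $\psink{\gamma}$ as lying in the range of $\T_\gamma + b$. The density of the range then reduces to the density of $\mathrm{span}\{\psink{\gamma} : n\ge 0,\ k\in\Zm\}$ in $L_+^2(\partial_+ S\Dm,\mu^{-2\gamma})$ defined in \eqref{eq:L2plus}, which is exactly the completeness of the singular system underlying Theorem \ref{thm:SVD}. I expect the main obstacle to be the two facts that must be checked with care here, and which are less transparent than in the polynomial case of Theorem \ref{thm:esssa}: first, that \emph{every} $\psink{\gamma}$ with $k\in\Zm$ (including the kernel functions with $k<0$ or $k>n$) genuinely lies in the domain $\mu^{2\gamma+1}C_{\alpha,+,+}^\infty$, which requires verifying both the $\SS_A$-evenness (a short parity computation giving $\SS_A^*\wtpsi_{n,k} = (-1)^{n-2k}(-1)^n \wtpsi_{n,k} = \wtpsi_{n,k}$) and the smooth $\SS$-even extendability across $\{\alpha = \pm\pi/2\}$; and second, that these functions are complete in $L_+^2$. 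The completeness genuinely needs the full range $k\in\Zm$: for fixed $m$, letting $k$ range outside $[0,n]$ supplies the low-degree ultraspherical polynomials $L_n^\gamma$ with $n<|m|$ that are missing from the $\{0\le k\le n\}$ family, and only then does one recover all polynomials in $\sin\alpha$ of the appropriate parity, whence density follows by Stone--Weierstrass in $\alpha$ and Fourier expansion in $\beta$.
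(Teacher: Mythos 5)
Your proposal is correct, and it reaches the conclusion by a genuinely different mechanism than the paper, even though both share the same skeleton (symmetry and semiboundedness from \eqref{eq:IBPTgamma}, then \cite[Theorem 9.8]{Helffer2013} reduces everything to dense range of $\T_\gamma+b$). You obtain dense range by explicitly diagonalizing: you prove the eigenequation $\T_\gamma \psink{\gamma} = (n+1+\gamma)^2 \psink{\gamma}$ for \emph{all} $k\in\Zm$ (your reduction via $T(e^{im(\beta+\alpha)}g) = -e^{im(\beta+\alpha)}g'$ and the Gegenbauer ODE is exactly the paper's later proof of Lemma \ref{lem:spectrum1}), and then invoke completeness of $\{\psink{\gamma}\}_{n\ge 0, k\in\Zm}$ in $L^2_+(\partial_+ S\Dm,\mu^{-2\gamma})$. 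The paper never computes eigenvalues at this stage: it conjugates by $\mu^{2\gamma+1}$ using \eqref{eq:Tgammaid} and exhibits finite-dimensional subspaces ${\cal P}_{m,k,\pm}$ of $C^\infty_{\alpha,+,+}$ (spans of $e^{ip(\beta+\alpha)}$ times even or odd polynomials in $\sin\alpha$) that are stable under the conjugated operator, using only the algebraic identities $T^2(\sin^p\alpha) = -p^2\sin^p\alpha + p(p-1)\sin^{p-2}\alpha$ and $\tan\alpha\, T(\sin^p\alpha) = -p\sin^p\alpha$; injectivity (from positivity) on a finite-dimensional invariant space forces surjectivity there, and density of finite sums of these spaces in $C^\infty_{\alpha,+,+}$ follows from \cite[Proposition 7]{Mishra2019}. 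Your route buys the explicit spectrum (reusable later), but must pay for the two facts you correctly flag. For the first, note that the smooth extendability of $\wtpsi_{n,k}$ across $\{\alpha=\pm\pi/2\}$ is precisely what \cite[Proposition 7]{Mishra2019} delivers: since $L_n^\gamma$ has parity $(-1)^n$, each $\wtpsi_{n,k}$ is a \emph{finite} linear combination of the $\phi_{p,q}$ with $p=n-2k$, hence lies in $C^\infty_{\alpha,+,+}$ — the same citation the paper's proof rests on. For the second, one word of caution: the completeness must be proved directly (your Stone--Weierstrass/Fourier/moment sketch does work, with the parity restriction absorbed by the $\SS_A$-evenness of the test function and the $\SS_A$-invariance of the measure), and \emph{not} quoted as "the completeness underlying Theorem \ref{thm:SVD}": in the paper's logical order, Theorem \ref{thm:SVD} is proved \emph{using} the present theorem (through Lemma \ref{lem:IDOclosure}), so citing it here would be circular. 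With that caveat and the \cite[Proposition 7]{Mishra2019} input made explicit, your argument is complete.
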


\begin{proof}
    We use again \cite[Theorem 9.8]{Helffer2013}, and show that $\T_\gamma + b$ has dense range, by showing that $\mu^{-2\gamma-1} \T_\gamma \mu^{2\gamma+1} + b$ has dense range in $C_{\alpha,+,+}^\infty$ and thus in $L_+^2(\partial_+ S\Dm, \mu^{2(\gamma+1)})$. First note that the identity \eqref{eq:IBPTgamma} implies that $\T_\gamma+b$ is injective on $\mu^{2\gamma+1}C_{\alpha,+,+}^\infty$ for all $b>-(\gamma+1)^2$, and thus $\mu^{-2\gamma-1} \T_\gamma \mu^{2\gamma+1} + b$ is injective on $C_{\alpha,+,+}^\infty$. Similarly to Theorem \ref{thm:esssa}, we find finite linear subspaces of $C_{\alpha,+,+}^\infty$ which are stable under $\mu^{-2\gamma-1} \T_\gamma \mu^{2\gamma+1} + b$ on which it becomes an isomorphism. Since finite sums of these subspaces are dense in $C_{\alpha,+,+}^\infty$, the result will follow. Recall from \cite[Proposition 7]{Mishra2019} that $C_{\alpha,+,+}^\infty (\partial_+ S\Dm)$ is described in terms of expansions with rapid decay along the span
    \begin{align*}
	\left\langle \phi_{p,q} := e^{ip(\beta+\alpha)} (e^{i(p-2q)\alpha} + (-1)^p e^{-i(p-2q)\alpha}), \quad p,q\in \Zm\right\rangle, 
    \end{align*}
    with the redundancy $\phi_{p,p-q} = (-1)^p \phi_{p,q}$. Now for $m\in \Nm_0$ and $k\in \Zm$, let us define
    \begin{align*}
	{\cal P}_{m,k,+} &:= \left\{ e^{i(2m-2k)(\beta+\alpha)} \sum_{\ell=0}^m a_\ell \sin^{2\ell} \alpha,\ a_0, \dots, a_m \in \Cm \right\} = \left\langle \phi_{2m-2k,\ell-k},\ 0\le \ell\le m \right\rangle,  \\
	{\cal P}_{m,k,-} &:= \left\{ e^{i(2m+1-2k)(\beta+\alpha)} \sum_{\ell=0}^m a_\ell \sin^{2\ell+1} \alpha,\ a_0, \dots, a_m \in \Cm \right\} = \left\langle \phi_{2m+1-2k,\ell-k},\ 0\le \ell\le m \right\rangle.
    \end{align*}
    Using that for any $p$, $T^2(\sin^p\alpha) = -p^2 \sin^p \alpha + p(p-1)\sin^{p-2}\alpha$ and $\tan \alpha T (\sin^p \alpha) = - p \sin^p \alpha$, it is easily seen that these spaces are stable under $\mu^{-2\gamma-1} \T_\gamma \mu^{2\gamma+1} + b$. This completes the proof.
\end{proof}

\section{Singular Value Decomposition - proof of Theorem \ref{thm:SVD}} \label{sec:proofThm1}

\subsection{Outline of the proof: intertwining differential operators} \label{sec:IDOs}

We will follow the overall method of intertwining differential operators given in \cite{Maass1991}, though revisited without separating angular harmonics, and emphasizing the ``disk'' viewpoint, see also \cite{Mishra2019}. 

A first key observation is the following intertwining relations: 

\begin{lemma}\label{lem:intertwiners}
    Let $\gamma\in \Rm$, and $\L_\gamma$, $\T_\gamma$ defined in \eqref{eq:Lgamma} and \eqref{eq:Tgamma}. Then we have that 

    (1) $I_0^\sharp \mu^{-2\gamma-1} ( \mu^{2\gamma+1} C_{\alpha,+,+}^\infty (\partial_+ S\Dm)) \subset C^\infty(\Dm)$.

    (2) On $C_{\alpha,+,+}^\infty(\partial_+ SM)$, we have 
    \begin{align}
	I_0^\sharp \mu^{-2\gamma-1} \circ \T_\gamma = \L_\gamma \circ I_0^\sharp \mu^{-2\gamma-1} \qquad \text{and} \qquad I_0^\sharp \mu^{-2\gamma-1} \circ \partial_\beta = \partial_\omega \circ I_0^\sharp \mu^{-2\gamma-1}.     
	\label{eq:intertwiners}
    \end{align}    
\end{lemma}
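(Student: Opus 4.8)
My plan is to prove both identities in \eqref{eq:intertwiners} by passing to coordinates adapted to a single family of parallel lines, in which $T=\partial_\beta-\partial_\alpha$ becomes a plain derivative, and then to transfer statements between $I_0 d^\gamma$ and its adjoint $I_0^\sharp\mu^{-2\gamma-1}$ using the essential self-adjointness of $\L_\gamma$ and $\T_\gamma$ from Theorems \ref{thm:esssa} and \ref{thm:esssa2}. The key observation is that $\xi:=\beta+\alpha$ is constant along each geodesic (indeed $T\xi=0$), so fixing $\xi$ selects a direction, and with the signed offset $s:=\sin\alpha$ a point of the line reads $z=-e^{i\xi}(r+is)$, whence $|z|^2=r^2+s^2$ and $1-|z|^2=1-r^2-s^2$. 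Reparameterizing the chord by $r\in[-\mu,\mu]$ (with $\mu=\sqrt{1-s^2}$) gives
\begin{equation*}
    I_0 d^\gamma f(\xi,s)=\int_{-\mu}^{\mu}(1-r^2-s^2)^\gamma\, f\big({-}e^{i\xi}(r+is)\big)\,dr,
\end{equation*}
and a direct computation in the variables $(\xi,s)$ shows $T=-\partial_\alpha\big|_\xi=-\mu\,\partial_s\big|_\xi$, so that $\T_\gamma$ acts purely in $s$ at fixed $\xi$.

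\textbf{The rotational intertwiner.} The relation $I_0^\sharp\mu^{-2\gamma-1}\circ\partial_\beta=\partial_\omega\circ I_0^\sharp\mu^{-2\gamma-1}$ I would deduce from rotation-equivariance: the rotation $z\mapsto e^{i\delta}z$ corresponds to $(\beta,\alpha)\mapsto(\beta+\delta,\alpha)$, leaves $\mu$ and both measures invariant, and commutes with $I_0^\sharp$. Since $\partial_\omega$ and $\partial_\beta$ are the respective infinitesimal generators and $\mu^{-2\gamma-1}$ is $\beta$-independent, differentiating the equivariance at $\delta=0$ yields the claim.

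\textbf{The main intertwiner.} For the identity involving $\L_\gamma$ and $\T_\gamma$, I would first establish the forward relation $\T_\gamma\circ I_0 d^\gamma=I_0 d^\gamma\circ\L_\gamma$ on $C^\infty(\Dm)$, then dualize. Using the expression $\L_\gamma=-(1-\rho^2)\Delta+(3+2\gamma)\rho\partial_\rho-\partial_\omega^2+(\gamma+1)^2$ from the proof of Theorem \ref{thm:esssa} together with the adapted-coordinate identities $\Delta=\partial_r^2+\partial_s^2$, $\rho\partial_\rho=r\partial_r+s\partial_s$, and $\partial_\omega=r\partial_s-s\partial_r$, one rewrites $\L_\gamma$ as a second-order operator in $(r,s)$ with polynomial coefficients. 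I would then insert this into $\int_{-\mu}^{\mu}(1-r^2-s^2)^\gamma\,\L_\gamma f\,dr$ and integrate by parts in $r$ to eliminate all along-the-line derivatives $\partial_r$; because $1-r^2-s^2$ vanishes at $r=\pm\mu$, the boundary contributions drop out for $\gamma>-1$, and the surviving terms should reorganize, via $T=-\mu\partial_s\big|_\xi$, into $\T_\gamma$ applied to $\int_{-\mu}^{\mu}(1-r^2-s^2)^\gamma f\,dr=I_0 d^\gamma f$. (As a consistency check, the conjugated operator $\mu^{-2\gamma-1}\T_\gamma\mu^{2\gamma+1}$ acts on $s$-polynomials as $-(1-s^2)\partial_s^2+(2\gamma+3)s\partial_s+(\gamma+1)^2$, a Gegenbauer operator for weight $(1-s^2)^{\gamma+1/2}$ with eigenvalue $(n+\gamma+1)^2$, matching the spectrum of $\L_\gamma$.) Finally, since $I_0^\sharp\mu^{-2\gamma-1}$ is the adjoint of $I_0 d^\gamma$ and both $\L_\gamma$, $\T_\gamma$ are self-adjoint on their respective weighted $L^2$ spaces, taking adjoints of $\T_\gamma I_0 d^\gamma=I_0 d^\gamma\L_\gamma$ on the dense cores $C^\infty(\Dm)$ and $\mu^{2\gamma+1}C_{\alpha,+,+}^\infty(\partial_+ S\Dm)$ returns exactly $I_0^\sharp\mu^{-2\gamma-1}\T_\gamma=\L_\gamma I_0^\sharp\mu^{-2\gamma-1}$.

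\textbf{Part (1) and the main obstacle.} For part (1), writing $\tilde\psi=\mu^{-2\gamma-1}\psi\in C_{\alpha,+,+}^\infty$, smoothness of $z\mapsto\int_{\Sm^1}\tilde\psi(\beta_-,\alpha_-)\,d\theta$ in the open disk is immediate from smooth dependence of $(\beta_-,\alpha_-)$ on $(z,\theta)$ (encoded by $\sin\alpha_-=\rho\sin(\xi-\omega)$); the only difficulty is smoothness up to $\partial\Dm$, where lines become tangent and $\alpha_-\to\pm\pi/2$. There the defining symmetries of $C_{\alpha,+,+}^\infty$, namely smoothness of the $\SS$-even extension and $\SS_A$-evenness, guarantee that the integrand extends smoothly across the tangency locus, so the integral lies in $C^\infty(\Dm)$; this is the mapping property of \cite{Mazzeo2021,Mishra2019}, which I would cite or reprove via the explicit formula for $\alpha_-$. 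I expect the main obstacle to be the bookkeeping in the forward intertwiner: correctly assembling the many terms produced by the change of variables and by the repeated integrations by parts in $r$ into precisely $\T_\gamma$, and rigorously justifying the vanishing of the boundary terms uniformly in $\gamma>-1$, including the singular range $-1<\gamma\le 0$ where the weight $(1-r^2-s^2)^\gamma$ blows up and a limiting argument is needed.
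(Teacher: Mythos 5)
Your route is genuinely different from the paper's: you propose to first prove the \emph{forward} intertwining $\T_\gamma\circ I_0 d^\gamma = I_0 d^\gamma\circ \L_\gamma$ by a chord-wise integration by parts in the coordinates $(\xi,s)=(\beta+\alpha,\sin\alpha)$, and then dualize; the paper never touches the forward operator, instead combining the conjugation identity \eqref{eq:Tgammaid} with three known intertwining relations for the \emph{unweighted} backprojection ($L\circ I_0^\sharp = I_0^\sharp\circ(T^2+2\tan\alpha\,T)$, $-\rho\partial_\rho\circ I_0^\sharp=I_0^\sharp\circ(\tan\alpha\,T)$, and the rotational one), imported from \cite{Monard2019a}, and finishing by algebra entirely on the backprojection side. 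Your setup is sound ($T=-\mu\partial_s|_\xi$ is correct, and your Gegenbauer consistency check reproduces Lemma \ref{lem:spectrum1}), and the rotation-equivariance argument for the second identity in \eqref{eq:intertwiners} is fine. But there is a genuine gap in the dualization. Since $\T_\gamma$ is unbounded, ``taking adjoints'' of $\T_\gamma I_0 d^\gamma = I_0 d^\gamma\L_\gamma$ is not a formality: testing the desired identity against $f\in C^\infty(\Dm)$, and using symmetry of $\L_\gamma$ on $C^\infty(\Dm)$ (legitimate, by part (1)) together with your forward relation, you are left needing $\langle \T_\gamma\psi,\,I_0 d^\gamma f\rangle_{\mu^{-2\gamma}}=\langle \psi,\,\T_\gamma(I_0 d^\gamma f)\rangle_{\mu^{-2\gamma}}$. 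This is an integration by parts on $\partial_+ S\Dm$ whose boundary terms at $\mu=0$ involve the unknown boundary behavior of $I_0 d^\gamma f$ and $T(I_0 d^\gamma f)$; equivalently it requires $I_0 d^\gamma f\in\dom(\overline{\T_\gamma})$ with $\overline{\T_\gamma}(I_0d^\gamma f)$ given by the formal expression. Essential self-adjointness (Theorem \ref{thm:esssa2}) does not supply this. What is needed is the forward mapping property $I_0 d^\gamma(C^\infty(\Dm))\subset \mu^{2\gamma+1}C^\infty_{\alpha,+,+}(\partial_+ S\Dm)$ — a substantive theorem of the same depth as part (1) (it is in \cite{Mazzeo2021}; \cite{Pestov2005} for $\gamma=0$) — which you neither prove nor cite. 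The paper's one-sided route is designed precisely to avoid this: the only mapping property it ever needs is part (1). Your argument becomes correct if you add that citation and then run a closure argument in the spirit of Lemma \ref{lem:IDOclosure}; as written, the step is unjustified.

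Two further cautions on the computation you defer as ``bookkeeping.'' First, the formula you quote from the proof of Theorem \ref{thm:esssa} contains a typo: rewriting \eqref{eq:Lgamma} in terms of the Laplacian gives $\L_\gamma = -(1-\rho^2)\Delta + (2+2\gamma)\rho\partial_\rho-\partial_\omega^2+(\gamma+1)^2$, not $(3+2\gamma)\rho\partial_\rho$ (check on $z^n$: the eigenvalue $(n+\gamma+1)^2$ of \eqref{eq:eigenequations} comes out only with coefficient $2+2\gamma$). Since your plan hinges on the $(r,s)$-terms reorganizing \emph{exactly} into $\T_\gamma$, starting from the quoted formula the computation cannot close. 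Second, the assertion that the chord-endpoint boundary terms ``drop out for $\gamma>-1$'' because $1-r^2-s^2$ vanishes there is false as stated on the range $-1<\gamma<0$, where the weight $(1-r^2-s^2)^\gamma$ \emph{blows up} at $r=\pm\mu$; you acknowledge that a limiting argument is needed, but this is not a routine patch, and together with the dualization issue above it is exactly where your proposal, unlike the paper's proof, still has real work to do.
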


We then use the following lemma. 

\begin{lemma}\label{lem:IDOclosure}
    For $i=1,2$, let $(D_i, \dom(D_i))$, densely defined on a Hilbert space $(H_i, \|\cdot\|_i)$, be two essentially self-adjoint operators. Suppose further that an operator $A\in {\cal B} (H_1,H_2)$ satisfies: 
    \begin{align}
	(i)\quad A(\dom(D_1)) \subset \dom (D_2) \qquad (ii)\quad A\circ D_1 = D_2 \circ A \text{ on } \dom(D_1). 	
	\label{eq:IDOcondition}
    \end{align}   
    Then $A(\dom(\overline{D_1})) \subset \dom (\overline{D_2})$ and $A\circ \overline{D_1} = \overline{D_2} \circ A$ on $\dom(\overline{D_1})$. 
\end{lemma}

Fixing $\gamma\in \Rm$ and considering $A = I_0^\sharp \mu^{-2\gamma-1}$, $D_1 = \T_\gamma$ with domain $\mu^{2\gamma+1} C_{\alpha,+,+}^\infty (\partial_+ S\Dm)$, and $D_2 = \L_\gamma$ with domain $C^\infty(\Dm)$, and using Theorems \ref{thm:esssa} and \ref{thm:esssa2}, we deduce that the intertwining property extends to the closure of $\T_\gamma$ and $\L_\gamma$. Similarly with $D_1 = \partial_\beta$ and $D_2 = \partial_\omega$.  

Next, the operator $I_0^\sharp \mu^{-2\gamma-1}$ intertwines the pair $(\overline{\T_\gamma}, \overline{D_\beta})$ with the pair $(\overline{L_\gamma}, \overline{D_\omega})$. We then have the following: 

\begin{lemma}\label{lem:spectrum1}
    The pair of commuting self-adjoint operators $(\overline{\T_\gamma}, \overline{D_\beta})$ satisfies
    \begin{align*}
	(\overline{\T_\gamma}, \overline{D_\beta}) \psink{\gamma} = ((n+1+\gamma)^2, n-2k) \psink{\gamma}, \qquad n\ge 0, \qquad k\in \Zm,
    \end{align*}
    where $\psink{\gamma}$ is defined in Eq. \eqref{eq:psinkgamma}. 
\end{lemma}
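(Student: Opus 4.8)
The plan is to verify the two eigenvalue equations directly on the smooth vector $\psink{\gamma}$, which I claim lies in the common core $\mu^{2\gamma+1} C_{\alpha,+,+}^\infty(\partial_+ S\Dm)$ of the closures $\overline{\T_\gamma}$ and $\overline{D_\beta}$, so that these closures act on it exactly as the differential operators $\T_\gamma$ and $D_\beta$ do. First I would settle the membership. Writing $\psink{\gamma} = \mu^{2\gamma+1}\wtpsi$ with $\wtpsi = \frac{(-1)^n}{2\pi} e^{i(n-2k)(\beta+\alpha)} L_n^\gamma(\sin\alpha)$, the factor $L_n^\gamma(\sin\alpha)$ is a polynomial in $\sin\alpha$ of degree $n$ and of parity $(-1)^n$ (as the orthogonal polynomial for an even weight). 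Hence $\wtpsi$ lies in one of the model spaces ${\cal P}_{m,k,\pm}$ constructed in the proof of Theorem \ref{thm:esssa2} (even $n$ landing in ${\cal P}_{n/2,k,+}$, odd $n$ in ${\cal P}_{(n-1)/2,k,-}$), which are subspaces of $C_{\alpha,+,+}^\infty(\partial_+ S\Dm)$. Since $\mu=\cos\alpha$ is independent of $\beta$ and $D_\beta=\frac1i\partial_\beta$ only hits the exponential $e^{i(n-2k)(\beta+\alpha)}$, the equation $\overline{D_\beta}\,\psink{\gamma} = (n-2k)\psink{\gamma}$ is then immediate.

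For the $\T_\gamma$-eigenequation I would use the conjugation identity \eqref{eq:Tgammaid}, which gives $\T_\gamma \psink{\gamma} = \mu^{2\gamma+1}\left[ -T^2 - 2(\gamma+1)\tan\alpha\, T + (\gamma+1)^2 \right]\wtpsi$, reducing the claim to showing that the bracketed operator acts on $\wtpsi$ by the scalar $(n+1+\gamma)^2$, equivalently that $\left[-T^2 - 2(\gamma+1)\tan\alpha\, T\right]\wtpsi = n(n+2\gamma+2)\,\wtpsi$. The key simplification is the change of variables $(u,v) = (\beta+\alpha,\alpha)$, under which $T = \partial_\beta - \partial_\alpha = -\partial_v$ acts only on the $\alpha$-dependence at fixed $u=\beta+\alpha$. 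Since $\wtpsi$ depends on $v$ only through $L_n^\gamma(\sin v)$, setting $x=\sin\alpha$ I would compute $T\wtpsi$, $\tan\alpha\, T\wtpsi$, and $T^2\wtpsi$ in terms of $(L_n^\gamma)'(x)$ and $(L_n^\gamma)''(x)$; the first-order contributions combine so that the claim becomes the single ordinary differential equation $(1-x^2)(L_n^\gamma)''(x) - (2\gamma+3)x\,(L_n^\gamma)'(x) + n(n+2\gamma+2)\,L_n^\gamma(x) = 0$.

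The final step is to recognize this as the Gegenbauer differential equation with parameter $\lambda = \gamma+1$: indeed $L_n^\gamma$ is the $n$-th orthogonal polynomial for the weight $(1-x^2)^{\gamma+1/2} = (1-x^2)^{\lambda-1/2}$, hence proportional to the Gegenbauer polynomial $C_n^{(\gamma+1)}$, which solves exactly $(1-x^2)y'' - (2\gamma+3)xy' + n(n+2\gamma+2)y = 0$. This closes the computation, and combining the eigenvalue $n(n+2\gamma+2)$ with the summand $(\gamma+1)^2$ yields the announced value $(n+1+\gamma)^2$. I do not anticipate a genuine obstacle: the only points requiring care are the parity bookkeeping that places $\psink{\gamma}$ in the core (so that the self-adjoint closures may be replaced by the differential operators acting pointwise), and the clean reduction of the second-order operator to the Gegenbauer ODE via the $(\beta+\alpha,\alpha)$ coordinates, which is where all the cancellation occurs.
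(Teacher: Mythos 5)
Your proposal is correct and takes essentially the same route as the paper's proof: both rest on the conjugation identity \eqref{eq:Tgammaid}, the observation that $T$ annihilates functions of $\beta+\alpha$ (your $(u,v)=(\beta+\alpha,\alpha)$ coordinates), and the identification of $L_n^\gamma$ as a multiple of the Gegenbauer polynomial $C_n^{(\gamma+1)}$ satisfying $(1-x^2)y'' - (2\gamma+3)x y' + n(n+2\gamma+2)y = 0$. The only difference is that you also verify explicitly that $\psink{\gamma}$ lies in the core $\mu^{2\gamma+1}C_{\alpha,+,+}^\infty(\partial_+ S\Dm)$ via the spaces ${\cal P}_{m,k,\pm}$, a legitimate point that the paper leaves implicit.
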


In particular, this joint pair of operators has simple spectrum and, upon computing $I_0^\sharp \mu^{-2\gamma-1}$ on the eigenbasis, we obtain the following simple dichotomy: for any $n\ge 0$ and $k\in \Zm$, $I_0^\sharp \mu^{-2\gamma-1} \psink{\gamma}$ is either (i) zero, or (ii) a non-trivial eigenfunction of $(\overline{\L_\gamma}, \overline{D_\omega})$ with eigenvalue $( (n+1+\gamma)^2, n-2k)$. We first show the following lemma, thus proving \eqref{eq:keradjoint}. 

\begin{lemma} \label{lem:kerI0star}
    For any $n\ge 0$, we have $I_0^\sharp \mu^{-2\gamma-1} \psink{\gamma} =0$ if and only if $k<0$ or $k>n$. 
\end{lemma}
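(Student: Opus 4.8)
The plan is to compute $\gnk{\gamma} = I_0^\sharp \mu^{-2\gamma-1}\psink{\gamma}$ explicitly as a backprojection integral, reduce it to a single Fourier coefficient in an angular variable, and then read off vanishing versus non-vanishing from an elementary support-and-parity condition; this settles both implications of the ``if and only if'' at once. First I would translate the geometry into fan-beam variables: for $z = \rho e^{i\omega}$ and a direction $e^{i\theta}$, the line through $(z,e^{i\theta})$ has fan-beam coordinates $(\beta_-,\alpha_-)$ satisfying $\sin\alpha_- = \rho\sin(\omega-\theta)$ and $e^{i(\beta_-+\alpha_-)} = -e^{i\theta}$ (the signed distance from the origin to the line equals $\sin\alpha$, and matching the travel direction $e^{i(\beta+\pi+\alpha)}=e^{i\theta}$ fixes $\beta_-+\alpha_-$ modulo $\pi$). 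Substituting $\mu^{-2\gamma-1}\psink{\gamma} = \frac{(-1)^n}{2\pi}e^{i(n-2k)(\beta+\alpha)}L_n^\gamma(\sin\alpha)$ into \eqref{eq:backproj}, the two factors of $(-1)^n$ cancel, and after the change of variables $\phi = \omega - \theta$ one arrives at
\begin{align*}
    \gnk{\gamma}(\rho e^{i\omega}) = \frac{e^{i(n-2k)\omega}}{2\pi}\int_0^{2\pi} e^{-i(n-2k)\phi}\, L_n^\gamma(\rho\sin\phi)\, d\phi.
\end{align*}

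Next I would exploit that $L_n^\gamma$ is a polynomial of degree $n$ and parity $(-1)^n$, so that $L_n^\gamma(\rho\sin\phi)$ expands as a sum of monomials $\rho^j(\sin\phi)^j$ with $j\le n$ and $j\equiv n \pmod 2$. Writing $(\sin\phi)^j$ through its binomial expansion in $e^{\pm i\phi}$, the $\phi$-integral selects the Fourier mode $m := n-2k$, and the mode-$m$ coefficient of $(\sin\phi)^j$ is nonzero precisely when $m\equiv j\pmod 2$ (automatic here) and $|m|\le j$. Since every contributing $j$ satisfies $j\le n$, a necessary condition for any term to survive is $|m| = |n-2k|\le n$, i.e. $0\le k\le n$.

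This immediately yields the vanishing direction: if $k<0$ or $k>n$ then $|n-2k| > n \ge j$ for all relevant $j$, every Fourier coefficient vanishes, and hence $\gnk{\gamma}=0$. For the converse (nonvanishing when $0\le k\le n$), I would isolate the top-degree term in $\rho$: the coefficient of $\rho^n$ is the product of the leading coefficient of $L_n^\gamma$ (nonzero) and the mode-$(n-2k)$ Fourier coefficient of $(\sin\phi)^n$, the latter being a nonzero multiple of $\binom{n}{k}$ precisely because $0\le k\le n$. Thus $\gnk{\gamma}$ is a genuine degree-$n$ polynomial in this regime, in particular nonzero.

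The main obstacle I anticipate is the bookkeeping in the first step: pinning down the two geometric identities with the correct orientation conventions, since a stray sign or a mishandled $(-1)^n$ there would corrupt the final parity count. Everything after that is a routine Fourier computation. As a cross-check, and as an alternative route to the vanishing direction alone, one may argue abstractly via the results already in place: by the intertwining of Lemma \ref{lem:intertwiners} and the spectral simplicity of Lemma \ref{lem:spectrum1}, any nonzero $\gnk{\gamma}$ must be a joint eigenfunction of $(\overline{\L_\gamma},\overline{D_\omega})$ with eigenvalue $((n+1+\gamma)^2, n-2k)$; since $\gamma>-1$ forces this pair to determine $(n,k)$ uniquely, and the complete eigenbasis $\{P^\gamma_{n-k,k}\}$ realizes it only for $0\le k\le n$, the joint eigenspace is trivial otherwise and $\gnk{\gamma}=0$.
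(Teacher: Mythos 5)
Your proposal is correct and takes essentially the same route as the paper's proof: both reduce $\gnk{\gamma}$ to the $(n-2k)$-th Fourier coefficient of $\theta\mapsto L_n^\gamma(\pm\rho\sin(\cdot))$, get the vanishing for $k<0$ or $k>n$ from the fact that a degree-$n$ polynomial in $\sin$ has Fourier support in modes $|m|\le n$, and get non-vanishing for $0\le k\le n$ by extracting the top-degree coefficient, which is a nonzero multiple of $\ell_n^\gamma\binom{n}{k}$. The only cosmetic differences are that the paper writes the leading term in complex coordinates as $g_{n,k}^\gamma\,\zbar^k z^{n-k}$ rather than $\rho^n e^{i(n-2k)\omega}$, and it does not invoke the parity of $L_n^\gamma$ (degree alone suffices).
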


On the orthocomplement of $\ker (I_0^\sharp \mu^{-2\gamma-1})$, all remaining functions are associated with simple eigenvalues of the pair $(\overline{\L_\gamma}, \overline{D_\omega})$, and as such form an orthogonal system. Upon calling $\gnk{\gamma} = I_0^\sharp \mu^{-2\gamma-1} \psink{\gamma}$ for $n\ge 0$ and $0\le k\le n$, we deduce that since both families are orthogonal, the SVD is simply deduced by normalizing them in their respective spaces, thereby obtaining 
\begin{align*}
    I_0^\sharp \mu^{-2\gamma-1} \widehat{\psink{\gamma}} = \signk{\gamma}\ \widehat{\gnk{\gamma}}, \quad n\ge 0, \quad 0\le k\le n,
\end{align*}
where $\signk{\gamma} = \|\gnk{\gamma}\|_{L^2(\Dm,d^\gamma)}/ \|\psink{\gamma}\|_{L^2(\partial_+ S\Dm, \mu^{-2\gamma})}$ are computed via bookkeeping combined with what is known about orthogonal polynomials in \cite{Wuensche2005}. By passing to the adjoint, this gives the SVD of $I_0 d^\gamma$ as given in the statement of Theorem \ref{thm:SVD}. 

\subsection{Proofs of intermediate lemmas}\label{sec:lemmas1}

\begin{proof}[Proof of Lemma \ref{lem:intertwiners}]
    First, the fact that $I_0^\sharp \mu^{-2\gamma-1} (\dom (\T_\gamma)) \subset \dom (\L_\gamma)$ is a direct consequence of the fact that $I_0^\sharp (C_{\alpha,+,+}^\infty (\partial_+ S\Dm)) \subset C^\infty(\Dm)$, first established in \cite{Pestov2005}. On to the intertwining relations, they will be mainly derived from the following three intertwining relations in \eqref{eq:intertwiners_paper}, see \cite{Monard2019a}. Defining the differential operator in polar coordinates $(z = \rho e^{i\omega})$
    \begin{align}
	L := (1-\rho^2) \partial_\rho^2 + (\rho^{-1}-3\rho)\partial_\rho + \rho^{-2} \partial_\omega^2,
	\label{eq:Lop}
    \end{align}
    and on $\partial_+ S\Dm$, $T:= \partial_\beta-\partial_\alpha$, $\mu = \cos \alpha$, so that $T\mu = \sin\alpha$. Then the intertwining relations hold: 
    \begin{align}
	L\circ I_0^\sharp = I_0^\sharp \circ (T^2 + 2\tan\alpha T), \qquad - \rho\partial_\rho \circ I_0^\sharp = I_0^\sharp \circ (\tan\alpha T), \qquad \partial_\beta \circ I_0^\sharp = I_0^\sharp \circ \partial_\omega, 
	\label{eq:intertwiners_paper}
    \end{align}
    so that the right equation in \eqref{eq:intertwiners} is satisfied. On to proving the left equation in \eqref{eq:intertwiners}, we write
    \begin{align*}
	I_0^\sharp\circ \mu^{-2\gamma-1} \T_\gamma \mu^{2\gamma+1} &\stackrel{\eqref{eq:Tgammaid}}{=} I_0^\sharp \circ (- T^2 - 2(\gamma+1) \tan\alpha T + (\gamma+1)^2 id) \\ 
	&\stackrel{\eqref{eq:intertwiners}}{=} (-L -2\gamma \rho\partial \rho + (\gamma+1)^2 id)\circ I_0^\sharp. 
    \end{align*}
    Upon right-multiplying by $\mu^{-2\gamma-1}$ and using that $\L_\gamma = -L -2\gamma \rho\partial \rho + (\gamma+1)^2 id$, the proof is complete. 
\end{proof}

\begin{proof}[Proof of Lemma \ref{lem:IDOclosure}] Suppose $u\in \dom(\overline{D_1})$, then there is a sequence $u_n\in \dom (D_1)$ such that $u_n\to u$ in $H_1$ and $D_1 u_n \to v$ in $H_1$, with $D_1 u = v$. Let $x_n := Au_n \in \dom(D_2)$. Since $A$ is continuous, $x_n$ converges to $Au$ in $H_2$ and $y_n := D_2 x_n = D_2 A u_n = A D_1 u_n$ converges to $A v$ in $H_2$. This means that $Au\in \dom(\overline{D_2})$, and since $\overline{D_2}$ is closed, $D_2 Au = Av = AD_1 u$. Hence the result.     
\end{proof}

\begin{proof}[Proof of Lemma \ref{lem:spectrum1}] Given the form of $\psink{\gamma}$, the relation $D_\beta \psink{\gamma} = (n-2k) \psink{\gamma}$ is immediate. On to $\T_\gamma$, 
    \begin{align}
	\T_\gamma \psink{\gamma} &\stackrel{\eqref{eq:Tgammaid}}{=} \frac{(-1)^n}{4\pi} \mu^{-2\gamma-1}[-T^2 - 2(\gamma+1) \tan\alpha T + (\gamma+1)^2 id] e^{i(n-2k)(\beta+\alpha)} L_n^\gamma(\sin \alpha) \nonumber \\
	&=  \frac{(-1)^n}{4\pi} \mu^{-2\gamma-1} e^{i(n-2k)(\beta+\alpha)} [-T^2 - 2(\gamma+1) \tan\alpha T + (\gamma+1)^2 id]  L_n^\gamma(\sin \alpha), \label{eq:temp}
    \end{align}
    since $T(f(\beta+\alpha)) =0$ for any function $f$. Now, $L_n^\gamma$ is proportional to the Gegenbauer polynomial $C_n^{(\gamma+1)}$, which satisfies the eigenvalue equation
    \begin{align*}
	-(1-x^2) (C_n^{(\gamma+1)})'' + (2\gamma+3) x (C_n^{(\gamma+1)})' = n(n+2\gamma+2) C_n^{(\gamma+1)},
    \end{align*}
    hence so does $L_n^\gamma$. This implies that 
    \begin{align*}
	(- T^2 - 2(\gamma+1)\tan\alpha T)  L_n^\gamma(\sin\alpha) = n(n+2\gamma+2) L_n^\gamma(\sin\alpha),
    \end{align*}
    and the relation $\T_\gamma \psink{\gamma} = (n+1+\gamma)^2 \psink{\gamma}$ follows upon plugging this into \eqref{eq:temp}. 
\end{proof}

\begin{proof}[Proof of Lemma \ref{lem:kerI0star}] We work in polar coordinates where $z = \rho e^{i\omega}$. Recall that, if $(\beta_-,\alpha_-) (\rho e^{i\omega}, \theta)\in \partial_+ S\Dm$ denote the fan-beam coordinates of the unique geodesic passing through $(\rho e^{i\omega}, \theta)$, we have 
    \begin{align*}
	\beta_- + \alpha_- + \pi = \theta, \qquad \sin \alpha_- (\rho e^{i\omega}, \theta) = \sin \alpha_- (\rho, \theta-\omega) = -\rho \sin (\theta-\omega), 
    \end{align*}
    see e.g. \cite[Section 3]{Monard2019a}. Using this, we compute
    \begin{align*}
	I_0^\sharp \mu^{-2\gamma-1} \psink{\gamma} (\rho e^{i\omega}) &= \frac{(-1)^n}{2\pi} I_0^\sharp [e^{i(n-2k)(\beta+\alpha)} L_n^\gamma(\sin\alpha)] (\rho e^{i\omega}) \\
	&= \frac{1}{2\pi} \int_{\Sm^1} e^{i(n-2k)\theta} L_n^\gamma (\sin \alpha_- (\rho e^{i\omega}, \theta))\ d\theta \\
	&= \frac{1}{2\pi}  \int_{\Sm^1} e^{i(n-2k)\theta} L_n^\gamma (-\rho \sin (\theta-\omega) )\ d\theta.
    \end{align*}
    Now using that $L_n^\gamma$ is a polynomial of degree $n$, we see that the Fourier content of $\theta\mapsto L_n^\gamma (-\rho \sin(\theta-\omega))$ is supported in $e^{-in\theta}, \dots, e^{in\theta}$. As a result, if $k<0$ or $k>n$, then $n-2k<-n$ or $n-2k>n$, and in either case, the integral defining $I_0^\sharp \mu^{-2\gamma-1} \psink{\gamma}$ vanishes. 

    Finally, we show that for $n\ge 0$ and $0\le k\le n$, $\gnk{\gamma} := I_0^\sharp \mu^{-2\gamma-1} \psink{\gamma} \ne 0$. From the identity $-\rho \sin(\theta-\omega) = \frac{i}{2} (\zbar e^{i\theta} - ze^{-i\theta})$ we have 
    \begin{align}
	\gnk{\gamma} = \frac{1}{2\pi} \int_{\Sm^1} e^{i(n-2k)\theta} L_n^\gamma \left( \frac{i}{2} (\zbar e^{i\theta} - z e^{-i\theta}) \right)\ d\theta,
	\label{eq:lntognk}
    \end{align}
    and thus $\gnk{\gamma}$ can be viewed as the $(2k-n)$-th Fourier coefficient of $\theta\mapsto L_n^\gamma \left( \frac{i}{2} (\zbar e^{i\theta} - z e^{-i\theta}) \right)$. In other words
    \begin{align*}
	L_n^\gamma \left( \frac{i}{2} (\zbar e^{i\theta} - z e^{-i\theta}) \right) = \sum_{k=0}^n e^{-i(n-2k)\theta} \gnk{\gamma}. 
    \end{align*}
    This last relation easily determines the top-degree terms of each $\gnk{\gamma}$: from $L_n^\gamma(x) = \ell_n^\gamma x^n + l.o.t\dots$, we get 
    \begin{align*}
	L_n^\gamma \left( \frac{i}{2} (\zbar e^{i\theta} - z e^{-i\theta}) \right) &= \ell_n^\gamma \left( \frac{i}{2} (\zbar e^{i\theta} - z e^{-i\theta}) \right)^n + l.o.t (z,\zbar) \\
	&= \frac{\ell_n^\gamma}{(2i)^n} \sum_{k=0}^n e^{-i(n-2k)\theta} (-1)^k \binom{n}{k} \zbar^k z^{n-k} + l.o.t (z,\zbar),
    \end{align*}
    and hence $\gnk{\gamma} = g_{n,k}^\gamma \zbar^k z^{n-k} + l.o.t(z,\zbar)$, where 
    \begin{align}
	g_{n,k}^\gamma := (-1)^k \frac{\ell_n^\gamma}{(2i)^n}  \binom{n}{k}.
	\label{eq:gnkgamma}
    \end{align} 
    In particular, $I_0^\sharp \mu^{-2\gamma-1} \psink{\gamma}$ is non-trivial whenever $n\ge 0$ and $0\le k\le n$. Lemma \ref{lem:kerI0star} is proved.
\end{proof}

We end this section with the computation of $\signk{\gamma} = \|\gnk{\gamma}\|_{L^2(\Dm,d^\gamma)}/ \|\psink{\gamma}\|_{L^2(\partial_+ S\Dm, \mu^{-2\gamma})}$. 

\begin{proof}[Proof of Eq. \eqref{eq:SVD1}] From the previous proof, the family $\{\gnk{\gamma}\}_{n\ge , 0\le k\le n}$ are orthogonal polynomials for $L^2(\Dm, d^\gamma)$, each with the same monomial content as the generalized Zernike polynomial $P_{n-k,k}^\gamma$ defined in \cite{Wuensche2005}. By uniqueness up to scaling of families of orthogonal polynomials, we must have $\gnk{\gamma} = \frac{g_{n,k}^\gamma}{p_{n-k,k}^\gamma} P_{n-k,k}^\gamma$, with $g_{n,k}^\gamma$ defined in \eqref{eq:gnkgamma} and $p_{n-k,k}^\gamma$ defined in \eqref{eq:norms}. Now the singular values are given by 
    \begin{align*}
	(\signk{\gamma})^2 = \frac{\|\gnk{\gamma}\|^2_{L^2(\Dm, d^\gamma)}}{\|\psink{\gamma}\|^2_{L^2(\partial_+ SM, \mu^{-2\gamma}d\Sigma^2)}} &= 2\pi \frac{\|\gnk{\gamma}\|^2_{L^2(\Dm, d^\gamma)}}{\|L_n^\gamma\|^2_{L^2([-1,1], (1-x^2)^{\gamma+1/2})}} \\
	&= \frac{2\pi}{4^n}\binom{n}{k}^2 \frac{\|P_{n-k,k}^\gamma\|^2_{L^2(\Dm, d^\gamma)}}{(p_{n-k,k}^\gamma)^2}\frac{(\ell_n^\gamma)^2}{\|L_n^\gamma\|^2_{L^2([-1,1], (1-x^2)^{\gamma+1/2})}}.
    \end{align*} 
    Notice how this expression is independent of the normalization chosen for each family of orthogonal polynomials. From \eqref{eq:norms} we have
    \begin{align*}
	\frac{\|P_{n-k,k}^\gamma\|^2_{L^2(\Dm, d^\gamma)}}{(p_{n-k,k}^\gamma)^2} = \frac{\pi}{n+\gamma + 1} \frac{(n-k)! k! (n-k+\gamma)! (k+\gamma)!}{ ((n+\gamma)!)^2 }.
    \end{align*}
    From \cite[A.3, A.4]{Wuensche2005}, we also gather
    \begin{align*}
	\|L_n^\gamma\|^2 = \frac{2^{2\gamma+1}}{n+\gamma+1} \frac{n! ((\gamma+1/2)!)^2}{(n+2\gamma+1)!}, \qquad \ell_n^\gamma = 2^n \frac{(2\gamma+1)! (n+\gamma)!}{(n+2\gamma+1)! \gamma!}, 
    \end{align*}
    and hence
    \begin{align*}
	\frac{(\ell_n^\gamma)^2}{\|L_n^\gamma\|^2} = 4^n \frac{n+\gamma+1}{2^{2\gamma+1}} \frac{(2\gamma+1)!^2 (n+\gamma)!^2}{(n+2\gamma+1)! \gamma!^2 n! (\gamma+1/2)!^2}.  
    \end{align*}
    Putting it all together, we obtain
    \begin{align*}
	(\signk{\gamma})^2 &= \frac{\pi^2}{2^{2\gamma}} \left( \frac{\Gamma(2\gamma+2)}{\Gamma(\gamma+1)\Gamma(\gamma+3/2)} \right)^2 \binom{n}{k} \frac{\Gamma(n-k+\gamma+1)\Gamma(k+\gamma+1)}{\Gamma(n+2\gamma+2)}
    \end{align*}
    and expression \eqref{eq:SVD1} follows using Legendre's duplication formula
    \begin{align}
	\Gamma(2z) = \frac{1}{\sqrt{\pi}} 2^{2z-1} \Gamma(z) \Gamma(z+1/2).
	\label{eq:duplication}
    \end{align}
\end{proof}

\section{Isomorphism properties - Proof of Theorem \ref{thm:isomorphism}}\label{sec:isomorphism}

\subsection{Outline of the proof} \label{sec:outline2}

For $\gamma>-1$ fixed, the main idea of the proof is to construct a scale of Sobolev-type spaces $(\wtH^{s,\gamma}(\Dm), \|\cdot\|_{s,\gamma})_{s\ge 0}$ which intersects to $C^\infty(\Dm)$, on which one may obtain two-sided continuity and stability estimates with uniform regularity gain/loss. In other words, the Fr\'echet topology of $C^\infty(\Dm)$ can be described in terms of the countable family of semi-norms $\{\|\cdot\|_{s,\gamma}\}_{s\in \Nm_0}$, graded in the sense that $\|\cdot\|_{s+1,\gamma} \ge \|\cdot\|_{s,\gamma}$ for all $s$. In this context, we show that $I_0^\sharp \mu^{-2\gamma-1} I_0 d^\gamma$ is tame (in the sense that there exists a fixed $\ell$ in dependent of $s$ such that $I_0^\sharp \mu^{-2\gamma-1} I_0 d^\gamma (\wtH^{s,\gamma}) \subset \wtH^{s+\ell,\gamma}$ for all $s$) and with tame inverse. 

On to the outline of the proof, we first give $k$-independent, large-$n$ asymptotics of the singular values of $I_0^\sharp \mu^{-2\gamma-1} I_0 d^\gamma$.

\begin{lemma}\label{lem:SVDasym}
    There exist constants $C_1$ and $C_2$ (depending on $\gamma$) such that
    \begin{align*}
	C_1(n+1)^{\min(-1,-1-\gamma)}\le (\signk{\gamma})^2\le C_2(n+1)^{\max(-1,-1-\gamma)}, \quad n\ge 0,\quad 0\le k\le n,
    \end{align*}
    where $\signk{\gamma}$ is defined in \eqref{eq:SVD2}.
\end{lemma}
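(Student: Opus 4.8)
The plan is to extract the large-$n$ asymptotics directly from the closed-form expression \eqref{eq:SVD2}, namely
\[
(\signk{\gamma})^2 = \frac{2^{2\gamma+2}\pi}{n+1} \frac{B(n-k+1+\gamma, k+1+\gamma)}{B(n-k+1, k+1)}.
\]
The factor $\frac{1}{n+1}$ already contributes the common $(n+1)^{-1}$ behavior, so the entire content of the lemma reduces to showing that the ratio of Beta functions
\[
R_{n,k}^\gamma := \frac{B(n-k+1+\gamma, k+1+\gamma)}{B(n-k+1, k+1)}
\]
is bounded above and below by constant multiples of $(n+1)^{\min(0,-\gamma)}$ and $(n+1)^{\max(0,-\gamma)}$ respectively, uniformly in $0\le k\le n$. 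I would first write $R_{n,k}^\gamma$ out in terms of Gamma functions using $B(x,y)=\Gamma(x)\Gamma(y)/\Gamma(x+y)$, which gives
\[
R_{n,k}^\gamma = \frac{\Gamma(n-k+1+\gamma)}{\Gamma(n-k+1)} \cdot \frac{\Gamma(k+1+\gamma)}{\Gamma(k+1)} \cdot \frac{\Gamma(n+2)}{\Gamma(n+2+2\gamma)}.
\]

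The key tool is the standard asymptotic $\frac{\Gamma(x+\gamma)}{\Gamma(x)} \sim x^\gamma$ as $x\to\infty$, together with the two-sided quantitative bounds that accompany it: for each fixed $\gamma$ there are constants $0<c\le C$ (depending only on $\gamma$) such that $c\,(x+1)^\gamma \le \frac{\Gamma(x+\gamma+1)}{\Gamma(x+1)} \le C\,(x+1)^\gamma$ for all $x\ge 0$ in the relevant integer range. Applying this to each of the three ratios, setting $a=n-k$ and $b=k$ so that $a+b=n$, yields
\[
R_{n,k}^\gamma \asymp (a+1)^\gamma (b+1)^\gamma (n+1)^{-2\gamma} = \left(\frac{(a+1)(b+1)}{n+1}\right)^\gamma (n+1)^{-\gamma},
\]
with implied constants uniform in $k$. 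The crux is then to control the quantity $q_{n,k} := \frac{(a+1)(b+1)}{n+1} = \frac{(n-k+1)(k+1)}{n+1}$ over the range $0\le k\le n$: it is minimized at the endpoints $k=0$ or $k=n$, where it equals $1$, and maximized near $k=n/2$, where it is comparable to $n+1$. Hence $1\le q_{n,k}\lesssim n+1$ uniformly, so $q_{n,k}^\gamma$ lies between $1$ and $(n+1)^\gamma$ (with the roles swapped according to the sign of $\gamma$).

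Combining these, $R_{n,k}^\gamma$ is pinched between constant multiples of $(n+1)^{-\gamma}$ (from $q_{n,k}\approx 1$) and $(n+1)^{0}$ (from $q_{n,k}\approx n+1$), i.e.\ between $(n+1)^{\min(0,-\gamma)}$ and $(n+1)^{\max(0,-\gamma)}$; restoring the $\frac{1}{n+1}$ prefactor gives exactly the exponents $\min(-1,-1-\gamma)$ and $\max(-1,-1-\gamma)$ claimed. I expect the main obstacle to be the bookkeeping of \emph{uniformity in $k$}: one must verify that the constants in the Gamma-ratio estimates do not degenerate as $k$ or $n-k$ approaches the boundary of its range (in particular the small-argument cases $k=0,1$ and $k=n-1,n$, where asymptotic approximations are weakest and one should instead use the exact two-sided bounds valid for all integers), and that the maximization/minimization of $q_{n,k}$ is carried out cleanly. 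Everything else is a direct substitution into \eqref{eq:SVD2}.
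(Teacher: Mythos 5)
Your proposal is correct: starting from \eqref{eq:SVD2}, the uniform two-sided bound $\Gamma(x+\gamma+1)/\Gamma(x+1)\asymp(x+1)^{\gamma}$ (valid for all $x\ge 0$ since $\Gamma(x+\gamma+1)/[\Gamma(x+1)(x+1)^{\gamma}]$ is continuous, positive, and tends to $1$ as $x\to\infty$), applied to the three Gamma ratios, reduces everything to the elementary estimate $1\le (n-k+1)(k+1)/(n+1)\le n+1$, and the sign-of-$\gamma$ case analysis you sketch does yield exactly the exponents $\min(-1,-1-\gamma)$ and $\max(-1,-1-\gamma)$. However, your route to uniformity in $k$ differs genuinely from the paper's. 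The paper works from \eqref{eq:SVD1}, uses the symmetry $\sigma^{\gamma}_{n,n-k}=\signk{\gamma}$ to restrict to $0\le k\le n/2$, and proves a discrete monotonicity lemma (Lemma \ref{lem:cnkgamma}): the ratio of consecutive singular values, computed in \eqref{svratio}, is on one side of $1$ for $k<(n-1)/2$ and on the other side for $k>(n-1)/2$, so for each fixed $n$ the extrema of $\signk{\gamma}$ over $k$ are attained exactly at $k\in\{0,n\}$ and $k=\lfloor n/2\rfloor$; pointwise large-$n$ asymptotics are then needed only at those two values of $k$, where $\Gamma(x+a)/\Gamma(x)\sim x^{a}$ suffices with no uniformity issue. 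Your approach instead applies uniform Gamma-ratio bounds at every $k$ simultaneously, which is arguably more elementary and robust: you never need to locate the extrema, only to bound the quantity $q_{n,k}$ crudely, and the bookkeeping point you flag (non-degeneration of constants for small arguments) is exactly where the continuity-plus-limit argument, or Wendel's/Gautschi's inequalities, close the gap. What the paper's route buys in exchange is sharper structural information --- the precise location of the maximum and minimum in $k$, and the identification of the two extremal asymptotic regimes $(\sigma^{\gamma}_{n,0})^2\sim 2^{2\gamma+2}\pi\Gamma(1+\gamma)n^{-1-\gamma}$ and $(\sigma^{\gamma}_{n,\lfloor n/2\rfloor})^2\sim 4\pi n^{-1}$ --- and the ratio computation \eqref{svratio} is recycled verbatim in the proof of Lemma \ref{lem:linftybound}, so the monotonicity lemma does double duty there, whereas your argument would need to be supplemented by a separate (though similar) estimate at that point.
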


For any $\gamma>-1$ and $s \ge 0$, let us define $\wtH^{s,\gamma}(\Dm) := \D(\L_\gamma^{s/2})$, where $\L_\gamma$ is the self-adjoint operator from Section \ref{sec:Lgamma}. If $\widehat{\gnk{\gamma}}$ denotes the $L^2(\Dm,d^{\gamma}\,dx)$-normalized generalized Zernike polynomials, by virtue of \eqref{eq:eigenequations}, we also have
\begin{align}
    \wtH^{s,\gamma}(\Dm) = \left\{f = \sum_{\substack{n\ge 0\\ 0\le k\le n}}{f_{n,k}\ \widehat{\gnk{\gamma}}}\,\mid\,\|f\|_{\wtH^{s,\gamma}(\Dm)}^2:=\sum_{\substack{n\ge 0\\ 0\le k\le n}}{(n+1+\gamma)^{2s} |f_{n,k}|^2}<\infty\right\}.
    \label{eq:Sobolev}
\end{align}
In particular, $\wtH^{0,\gamma}(\Dm) = L^2(\Dm, d^{\gamma})$, and $\|\L_{\gamma}f\|_{\wtH^{s,\gamma}(\Dm)} = \|f\|_{\wtH^{s+2,\gamma}(\Dm)}$ for all $s\ge 0$ and $f\in \wtH^{s+2,\gamma}(\Dm)$. We first establish the following fact about the generalized Zernike polynomials. 
\begin{lemma}\label{lem:linftybound}
    There exists a constant $C$ (depending on $\gamma$, but not on $n$ or $k$) such that
    \begin{align*}
	\frac{\|\gnk{\gamma}\|_{L^{\infty}}}{\|\gnk{\gamma}\|_{L^2(\Dm, d^{\gamma}\,dx)}}\le C \left\{
	\begin{array}{cc}
	    (n+1)^{1+\gamma} & (\gamma \le 0), \\
	    (n+1)^{1+\frac{3}{2}\gamma} & (\gamma>0). 	    
	\end{array}
	\right.
    \end{align*}
\end{lemma}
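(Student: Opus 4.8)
The plan is to estimate the numerator $\|\gnk{\gamma}\|_{L^\infty}$ and the denominator $\|\gnk{\gamma}\|_{L^2(\Dm,d^\gamma)}$ separately against quantities attached to the one-dimensional polynomial $L_n^\gamma$, for which endpoint asymptotics are classical, and then to feed the lower bound on $\signk{\gamma}$ supplied by Lemma~\ref{lem:SVDasym} into the denominator. Since the left-hand ratio is invariant under rescaling $L_n^\gamma$, I keep the normalization of $L_n^\gamma$ free and track it only through two identities already available in the excerpt: the representation \eqref{eq:lntognk}, which exhibits $\gnk{\gamma}(\rho e^{i\omega})$ as the $(2k-n)$-th angular Fourier coefficient of $\phi \mapsto L_n^\gamma(-\rho \sin\phi)$, and the formula $\|\psink{\gamma}\|_{L^2(\partial_+ S\Dm,\mu^{-2\gamma})}^2 = (2\pi)^{-1}\|L_n^\gamma\|_{L^2([-1,1],(1-x^2)^{\gamma+1/2})}^2$ established in the proof of \eqref{eq:SVD1}.

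For the numerator, I first note that $\sup_{\Dm}|\gnk{\gamma}|$ need not be attained on $\partial\Dm$ (for $k=n/2$ it sits at the origin), so one cannot reduce to $\rho=1$; instead I bound the Fourier coefficient by Cauchy--Schwarz in the angular variable, uniformly in $\rho\in[0,1]$,
\begin{align*}
\|\gnk{\gamma}\|_{L^\infty(\Dm)} \le \frac{1}{\sqrt{2\pi}}\,\sup_{0\le\rho\le1}\left(\int_0^{2\pi} \big|L_n^\gamma(-\rho\sin\phi)\big|^2\,d\phi\right)^{1/2}.
\end{align*}
After the substitution $x=\rho\sin\phi$ the inner integral becomes a weighted $L^2$ mass of $L_n^\gamma$ on $[-\rho,\rho]$ with weight $(\rho^2-x^2)^{-1/2}$; as $\rho\uparrow 1$ the singularity migrates to $x=\pm1$, precisely where $L_n^\gamma$ (a multiple of the Gegenbauer polynomial $C_n^{(\gamma+1)}$) is largest, so the supremum is governed by the Chebyshev-weighted norm $\|L_n^\gamma\|_{L^2((1-x^2)^{-1/2})}$. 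Using the known endpoint/pointwise asymptotics of $C_n^{(\gamma+1)}$, the numerator reduces to the normalization-free comparison
\begin{align*}
\frac{\|L_n^\gamma\|_{L^2([-1,1],(1-x^2)^{-1/2})}}{\|L_n^\gamma\|_{L^2([-1,1],(1-x^2)^{\gamma+1/2})}} \le C\,(n+1)^{\gamma+1/2},
\end{align*}
which quantifies the cost of passing from the Gegenbauer weight to the lighter Chebyshev weight, and is where the endpoint concentration of $C_n^{(\gamma+1)}$ enters.

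For the denominator I write $\|\gnk{\gamma}\|_{L^2(\Dm,d^\gamma)} = \signk{\gamma}\,\|\psink{\gamma}\|_{L^2(\partial_+ S\Dm,\mu^{-2\gamma})}$ together with the identity for $\|\psink{\gamma}\|$ recalled above. Upon dividing, the norms of $L_n^\gamma$ cancel and I am left with
\begin{align*}
\frac{\|\gnk{\gamma}\|_{L^\infty}}{\|\gnk{\gamma}\|_{L^2(\Dm,d^\gamma)}} \le \frac{C}{\signk{\gamma}}\,(n+1)^{\gamma+1/2}.
\end{align*}
Finally Lemma~\ref{lem:SVDasym} gives $(\signk{\gamma})^{-1}\le C\,(n+1)^{-\frac12\min(-1,-1-\gamma)} = C\,(n+1)^{\max(1/2,(1+\gamma)/2)}$. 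For $\gamma\le 0$ this maximum equals $1/2$ and produces the exponent $1+\gamma$, while for $\gamma>0$ it equals $(1+\gamma)/2$ and produces $1+\tfrac{3}{2}\gamma$; thus the dichotomy in the statement is inherited directly from the dichotomy of singular-value asymptotics in Lemma~\ref{lem:SVDasym}.

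The main obstacle is the weighted-norm comparison in the second display, namely making precise the gain of $(n+1)^{\gamma+1/2}$ from the endpoint concentration of $C_n^{(\gamma+1)}$, together with the uniform-in-$\rho$ control in the Cauchy--Schwarz step (the weight $(\rho^2-x^2)^{-1/2}$ must be dominated so that the supremum over $\rho\le 1$ is genuinely controlled by the endpoint, $\rho=1$, behavior). Both rest only on classical uniform asymptotics for ultraspherical polynomials, but they are the quantitative heart of the estimate; I also note that replacing the Cauchy--Schwarz bound by the cruder pointwise bound $\|\gnk{\gamma}\|_{L^\infty}\le \|L_n^\gamma\|_{L^\infty}$ would lose a further factor $(n+1)^{1/2}$, so the averaging inherent in the backprojection \eqref{eq:lntognk} is essential to reaching the stated exponents.
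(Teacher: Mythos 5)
You take a genuinely different route from the paper, and the difference is worth spelling out. The paper bounds the numerator by the crude pointwise estimate $\|\gnk{\gamma}\|_{L^\infty}\le\|L_n^\gamma\|_{L^\infty}$ (discarding the angular averaging in \eqref{eq:lntognk}), and compensates by computing $\|\gnk{\gamma}\|_{L^2(\Dm,d^\gamma)}$ exactly through the identification with $P^\gamma_{n-k,k}$ and \eqref{eq:norms}, then maximizing the resulting explicit constant $C_{n,k}$ over $k$. Your denominator treatment, $\|\gnk{\gamma}\|_{L^2}=\signk{\gamma}\,\|\psink{\gamma}\|$ combined with the worst-case bound of Lemma \ref{lem:SVDasym}, is equivalent to the paper's (since $\|\psink{\gamma}\|$ is $k$-independent); the genuine novelty is your Cauchy--Schwarz numerator.

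However, the two steps you flag as the ``quantitative heart'' are real gaps, and one of them is unfixable on part of the parameter range. First, the uniform-in-$\rho$ claim $\sup_{\rho\le1}\int_0^{2\pi}|p(\rho\sin\phi)|^2\,d\phi\le C\int_0^{2\pi}|p(\sin\phi)|^2\,d\phi$ is \emph{false} for general polynomials with $C$ independent of the degree: for $p(x)=(1-x^2)^m$ the left side is at least $2\pi$ (take $\rho=0$) while the right side is $\int_0^{2\pi}\cos^{4m}\phi\,d\phi\asymp m^{-1/2}$. Moreover the pointwise weight comparison goes the wrong way, $(\rho^2-x^2)^{-1/2}\ge(1-x^2)^{-1/2}$ on $[-\rho,\rho]$, so this step must be extracted from uniform asymptotics of $C_n^{(\gamma+1)}$ (it is true for these polynomials, but that is where the work lies). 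Second, and fatally for $-1<\gamma<-1/2$, your key comparison $\|L_n^\gamma\|_{L^2((1-x^2)^{-1/2})}\le C(n+1)^{\gamma+1/2}\|L_n^\gamma\|_{L^2((1-x^2)^{\gamma+1/2})}$ fails there: writing $\lambda=\gamma+1\in(0,\tfrac12)$, the endpoint values $C_n^{(\lambda)}(\pm1)\asymp n^{2\lambda-1}$ are too small to dominate, both norms are carried by the oscillatory region where $|C_n^{(\lambda)}(\cos\theta)|\approx n^{\lambda-1}(\sin\theta)^{-\lambda}$, and the two norms are comparable --- whereas your claimed gain $(n+1)^{\gamma+1/2}$ tends to zero. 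Your pipeline then yields only $(n+1)^{1/2}$ on that range, not $(n+1)^{1+\gamma}$.

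That failure is not yours to repair, because the statement itself fails for $-1<\gamma<-1/2$: take $n$ even and $k=n/2$; then $Z^\gamma_{n,n/2}$ is proportional to $P^{(\gamma,0)}_{n/2}(2|z|^2-1)$, whose sup over $\Dm$ equals $|P^{(\gamma,0)}_{n/2}(-1)|=1$ (Szeg\H{o}'s theorem applies since $\max(\gamma,0)=0\ge-\tfrac12$) and whose squared $L^2(\Dm,d^\gamma)$-norm equals $\pi/(n+\gamma+1)$ exactly, so the ratio is $\sqrt{(n+\gamma+1)/\pi}\asymp n^{1/2}$, which no constant can bring below $(n+1)^{1+\gamma}$ when $1+\gamma<\tfrac12$. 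Consistently, the paper's own proof contains an arithmetic slip: the stated asymptotics $C_{n,0}^2\sim c\,n^{3\gamma+2}$ and $C_{n,\lfloor n/2\rfloor}^2\sim c\,n^{2\gamma+2}$ are each off by one power of $n$ (at $\gamma=0$ the formula gives exactly $C_{n,k}^2=(n+1)^3/\pi$, not $\asymp n^2$), so the crude-sup argument actually proves only the exponents $\tfrac32+\gamma$ ($\gamma\le0$) and $\tfrac32(1+\gamma)$ ($\gamma>0$). The upshot: for $\gamma>-\tfrac12$ your approach, once the two Gegenbauer estimates are supplied, does prove the stated exponents and is strictly sharper than the paper's own argument --- by precisely the factor $(n+1)^{1/2}$ you identify in your last remark; for $-1<\gamma\le-\tfrac12$ the lemma should be weakened to $(n+1)^{1/2}$ (up to a possible logarithm at $\gamma=-\tfrac12$), which is still ample for Lemma \ref{lem:continc} and Theorem \ref{thm:isomorphism} after adjusting the thresholds on $s$.
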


\begin{remark}
    In the case $\gamma=0$, this result is asymptotically $n^{1/2}$ weaker than the optimal result where for the Zernike polynomials we have $\|Z_{n,k}\|_{L^{\infty}}/\|Z_{n,k}\|_{L^2} = \pi^{-1/2}(n+1)^{1/2}$; see \cite{Monard2017}.
\end{remark}

The next lemma makes use of properties enjoyed by the generalized Zernike polynomials as a triply-indexed family (over $\gamma, n, k$). Specifically, the results in \cite{Wuensche2005} help show that the action of $\frac{\partial}{\partial z}$ and $\frac{\partial}{\partial \zbar}$ is sharply understood between two bases with exponents $\gamma$ differing by $1$.

\begin{lemma}\label{lem:diffmap}
    For any $s\ge 0$ and $\gamma>-1$, the operators $\frac{\partial}{\partial z}$ and $\frac{\partial}{\partial \zbar}$ are $\wtH^{s+1,\gamma}(\Dm)\to \wtH^{s,\gamma+1}(\Dm)$-bounded.
\end{lemma}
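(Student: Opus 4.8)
The plan is to express the action of $\partial_z$ and $\partial_{\zbar}$ on the basis $\{\gnk{\gamma}\}$ in terms of the shifted basis $\{\gnk{\gamma+1}\}$, read off the resulting coefficients explicitly, and then verify that the induced map on coefficient sequences is bounded from the weighted $\ell^2$-space defining $\wtH^{s+1,\gamma}$ to the one defining $\wtH^{s,\gamma+1}$. Since both Sobolev spaces are defined in \eqref{eq:Sobolev} via orthonormal expansions against the respective normalized Zernike polynomials, with weights $(n+1+\gamma)^{2(s+1)}$ on the domain side and $(n+1+\gamma+1)^{2s}$ on the codomain side, the whole question reduces to a weighted-shift estimate on sequences once the transfer coefficients are known.

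The key computational input comes from \cite{Wuensche2005}: the derivative formulas there express $\partial_z P_{m,\ell}^\gamma$ and $\partial_{\zbar} P_{m,\ell}^\gamma$ as finite (in fact, essentially single-term, up to lower-order corrections) combinations of $P_{m',\ell'}^{\gamma+1}$ with $(m',\ell')$ obtained by lowering one index by one. Translating into the $(n,k)$ indexing, $\partial_z$ lowers $n$ by one while preserving the angular structure appropriately (mapping $\gnk{\gamma}$ to a multiple of $Z_{n-1,k}^{\gamma+1}$), and similarly $\partial_{\zbar}$ maps $\gnk{\gamma}$ to a multiple of $Z_{n-1,k-1}^{\gamma+1}$; the precise proportionality constants involve ratios of Gamma functions in $n,k,\gamma$. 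So first I would carefully extract these constants from \eqref{eq:Pml}--\eqref{eq:norms} and the cited formulas, being attentive to the normalization: what must be controlled is the ratio of the constant to $\|\gnk{\gamma}\|$ times $\|Z_{n-1,\cdot}^{\gamma+1}\|$, i.e. the coefficient in the expansion against the \emph{normalized} bases $\widehat{\gnk{\gamma}}$ and $\widehat{Z_{n-1,\cdot}^{\gamma+1}}$.

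Second, using the explicit $L^2$-norms in \eqref{eq:norms} for both $\gamma$ and $\gamma+1$, I would show that these normalized transfer coefficients are $O(n+1)$ uniformly in $k$ (a first-order differential operator should cost one derivative, i.e. a factor growing like $n$). Granting this, if $f = \sum f_{n,k}\,\widehat{\gnk{\gamma}}$ then $\partial_z f$ has coefficients against $\widehat{Z_{n-1,k}^{\gamma+1}}$ that are bounded by $C(n+1)|f_{n,k}|$, and
\begin{align*}
    \|\partial_z f\|_{\wtH^{s,\gamma+1}}^2 \le C^2 \sum_{n,k} (n+\gamma+1)^{2s} (n+1)^2 |f_{n,k}|^2 \le C' \sum_{n,k} (n+\gamma+1)^{2(s+1)} |f_{n,k}|^2 = C' \|f\|_{\wtH^{s+1,\gamma}}^2,
\end{align*}
where the middle inequality absorbs the shift between $n$ and $n-1$ in the codomain weight (harmless for $n\ge 1$, and the $n=0$ term maps to $0$) and compares $(n+1)^2$ against the extra two powers gained from $s+1$ versus $s$. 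The same argument applies verbatim to $\partial_{\zbar}$ with $k$ shifted to $k-1$.

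I expect the main obstacle to be the bookkeeping in the second step: confirming that the Gamma-function ratios arising from combining the raw derivative coefficients of \cite{Wuensche2005} with the norm ratios $\|\gnk{\gamma}\|/\|Z_{n-1,k}^{\gamma+1}\|$ genuinely grow like $n$ and no faster, \emph{uniformly in} $k$ over $0\le k\le n$. This is where the $\gamma$-dependence enters delicately, since the endpoints $k=0$ and $k=n$ must be checked alongside the bulk, and one must ensure that no factor like $k+\gamma+1$ or $n-k+\gamma+1$ conspires to produce super-linear growth. Once the uniform $O(n+1)$ bound on the normalized coefficients is secured, the passage to the Sobolev estimate is the elementary weighted-shift computation above.
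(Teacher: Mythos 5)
Your proposal follows essentially the same route as the paper's proof: it invokes the same single-term derivative identities from \cite{Wuensche2005} sending $P_{m,\ell}^\gamma$ to multiples of $P_{m-1,\ell}^{\gamma+1}$ and $P_{m,\ell-1}^{\gamma+1}$, normalizes via the explicit norms \eqref{eq:norms}, and concludes with the same weighted-shift estimate. The uniform bound you flag as the main risk does hold exactly as you predict: the normalized coefficients come out as $\bigl((\gamma+1)(n-k)(k+\gamma+1)\bigr)^{1/2}$ and $\bigl((\gamma+1)(n-k+\gamma+1)k\bigr)^{1/2}$, which are $O(n+1)$ uniformly in $0\le k\le n$ by the elementary inequality $(n+1-k)(k+\gamma+1)\le (n+\gamma+2)^2/4$.
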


Lemmas \ref{lem:linftybound} and \ref{lem:diffmap} then help us prove the following inclusions. 
\begin{lemma}\label{lem:continc}
    (a) We have a continuous inclusion $\wtH^{s,\gamma}(\Dm)\hookrightarrow C(\Dm)$ whenever $s>2+\max(\gamma,\frac{3}{2}\gamma)$.

    (b) For $k\ge 1$, we have a continuous inclusion $\wtH^{s,\gamma}(\Dm)\hookrightarrow C^k(\Dm)$ whenever $s>{2+\frac{3}{2}\gamma+\frac{5}{2}k}$.

    (c) For any $\gamma>-1$, $\cap_s \wtH^{s,\gamma}(\Dm) = C^{\infty}(\Dm)$.
\end{lemma}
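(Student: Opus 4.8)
The plan is to establish the three parts in the order (a) $\to$ (b) $\to$ (c), deducing each from the previous together with the preparatory Lemmas \ref{lem:linftybound} and \ref{lem:diffmap}. The guiding principle is that membership in $\wtH^{s,\gamma}(\Dm)$ is a coefficient-decay statement along the orthonormal basis $\{\widehat{\gnk{\gamma}}\}$, so uniform convergence of the expansion (and hence continuity of its limit) can be read off by pairing the coefficient decay against the $L^\infty$-size of each basis element supplied by Lemma \ref{lem:linftybound}.

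For part (a), I would write $f = \sum_{n,k} f_{n,k}\widehat{\gnk{\gamma}}$ with $\|f\|_{\wtH^{s,\gamma}}^2 = \sum_{n,k}(n+1+\gamma)^{2s}|f_{n,k}|^2 < \infty$, estimate $\|f\|_{L^\infty} \le \sum_{n,k}|f_{n,k}|\,\|\widehat{\gnk{\gamma}}\|_{L^\infty}$, and insert Lemma \ref{lem:linftybound}, which gives $\|\widehat{\gnk{\gamma}}\|_{L^\infty} \le C(n+1)^{a}$ with $a = 1+\max(\gamma,\tfrac32\gamma)$. The key step is to apply Cauchy--Schwarz twice: first over $k$, which ranges over the $n+1$ values $0,\dots,n$ and thus contributes a factor $(n+1)^{1/2}$; then over $n$ against the weight $(n+1+\gamma)^{s}$. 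Since $(n+1+\gamma)\sim(n+1)$ for $\gamma>-1$, the resulting scalar series behaves like $\sum_n (n+1)^{2a+1-2s}$, which converges precisely when $s>a+1=2+\max(\gamma,\tfrac32\gamma)$; uniform convergence of the partial sums then yields $f\in C(\Dm)$ along with boundedness of the inclusion.

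Part (b) reduces to part (a) by peeling off derivatives. Every Cartesian derivative is a linear combination of $\partial_z,\partial_{\zbar}$, and by Lemma \ref{lem:diffmap} each application sends $\wtH^{\sigma,\gamma}\to\wtH^{\sigma-1,\gamma+1}$ boundedly; iterating, a $j$-fold derivative maps $\wtH^{s,\gamma}\to\wtH^{s-j,\gamma+j}$ for $j\le k$. Since $\gamma+j>0$ once $j\ge1$, part (a) at parameter $\gamma+j$ (where the maximum is $\tfrac32(\gamma+j)$) embeds $\wtH^{s-j,\gamma+j}\hookrightarrow C(\Dm)$ whenever $s-j>2+\tfrac32(\gamma+j)$, i.e. $s>2+\tfrac32\gamma+\tfrac52 j$. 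Taking the worst case $j=k$ (which dominates all lower orders as well as the $j=0$ threshold from (a)) controls every derivative up to order $k$ in $L^\infty$, giving $f\in C^k(\Dm)$ with control of the $C^k$-norm under $s>2+\tfrac32\gamma+\tfrac52 k$. For part (c) I prove both inclusions: for $C^\infty(\Dm)\subseteq\cap_s\wtH^{s,\gamma}(\Dm)$, I use that $\L_\gamma$ has smooth coefficients on $\Dm$ (as noted after \eqref{eq:Lgamma}) hence maps $C^\infty(\Dm)$ into itself, so for $f\in C^\infty(\Dm)$ and any $m$ one has $\L_\gamma^m f\in C^\infty(\Dm)\subset L^2(\Dm,d^\gamma)$ (integrability holding since $\gamma>-1$), giving $\|f\|_{\wtH^{2m,\gamma}}=\|\L_\gamma^m f\|_{L^2(\Dm,d^\gamma)}<\infty$ and thus $f\in\wtH^{s,\gamma}$ for all $s$; conversely, if $f\in\cap_s\wtH^{s,\gamma}(\Dm)$ then for each $k$ I choose $s>2+\tfrac32\gamma+\tfrac52 k$ and invoke part (b) to get $f\in C^k(\Dm)$, whence $f\in C^\infty(\Dm)$ as $k$ is arbitrary.

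The one point demanding genuine care is the exponent bookkeeping in part (a): the factor $(n+1)^{1/2}$ produced by summing over the $n+1$ admissible values of $k$ is exactly what pins down the threshold, so it must be tracked precisely, and one must confirm that $(n+1+\gamma)$ and $(n+1)$ are comparable for all $n\ge0$ (true since $\gamma>-1$) so that convergence is governed purely by the power of $n$. Everything downstream is mechanical: propagating these estimates through the $\gamma$-shifting Lemma \ref{lem:diffmap} for (b), and a standard domain-of-powers argument for the forward inclusion in (c).
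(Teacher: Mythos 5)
Your proposal is correct and follows essentially the same route as the paper: part (a) via uniform convergence of the basis expansion using Cauchy--Schwarz against the $L^\infty$ bounds of Lemma \ref{lem:linftybound} (your two-step Cauchy--Schwarz over $k$ then $n$ is arithmetically identical to the paper's single application), part (b) by iterating Lemma \ref{lem:diffmap} to shift to parameter $\gamma+|\alpha|$ and invoking (a) there, and part (c) with the forward inclusion via powers of $\L_\gamma$ and the reverse via the embeddings just proved. The exponent bookkeeping ($3+\max(2\gamma,3\gamma)-2s<-1$) and the domination of the lower-order thresholds by the $j=k$ one both check out.
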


Concluding with the mapping properties of $I_0^\sharp \mu^{-2\gamma-1} I_0 d^\gamma$, the asymptotics from Lemma \ref{lem:SVDasym} immediately imply the estimates 
\begin{align*}
    C_1 \|f\|_{\wtH^{s+\min(-1,-1-\gamma),\gamma}} \le \|I_0^\sharp \mu^{-2\gamma-1} I_0 d^\gamma f\|_{\wtH^{s,\gamma}} \le C_2 \|f\|_{\wtH^{s+\max(-1,-1-\gamma),\gamma}},
\end{align*}
for all $s$ such that all three Sobolev exponents are non-negative. It follows that 
\begin{align*}
    \wtH^{s+\max(1,1+\gamma),\gamma}(\Dm)  \subset I_0^\sharp \mu^{-2\gamma-1} I_0 d^\gamma (\wtH^{s,\gamma}(\Dm)) \subset \wtH^{s+\min(1,1+\gamma),\gamma}(\Dm), \quad s\ge 0,
\end{align*}
where the left inclusion follows from the open mapping theorem. Combining this with Lemma \ref{lem:continc}.(c), the $C^\infty(\Dm)$-isomorphism property follows, and Theorem \ref{thm:isomorphism} is proved. 

\subsection{Proofs of remaining lemmas}\label{sec:lemmas2}
\paragraph{Singular Value asymptotics.}

\begin{proof}[Proof of Lemma \ref{lem:SVDasym}] From expression \eqref{eq:SVD1}, we have
    \begin{align*}
	(\signk{\gamma})^2 &= 2^{2\gamma+2}\pi\binom{n}{k}\frac{\Gamma(n-k+\gamma+1)\Gamma(k+\gamma+1)}{\Gamma(n+2\gamma+2)} \\
	&= 2^{2\gamma+2}\pi\frac{\Gamma(n+1)}{\Gamma((n+1)+2\gamma+1)}\frac{\Gamma(n-k+1+\gamma)}{\Gamma(n-k+1)}\frac{\Gamma(k+1+\gamma)}{\Gamma(k+1)}.
    \end{align*}
    To obtain $k$-independent asymptotics, a first observation is that $\sigma_{n,n-k}^{\gamma} = \signk{\gamma}$, so that it suffices to study the case $0\le k\le n/2$. Second, from the asymptotic formula $\frac{\Gamma(x+a)}{\Gamma(x)}\sim x^a$ as $x\to \infty$, we deduce the specific two asymptotic regimes 
    \begin{align*}
	(\sigma_{n,0}^{\gamma})^2\sim 2^{2\gamma+2}\pi\Gamma(1+\gamma)n^{-1-\gamma} \qquad (\sigma_{n,\lfloor{n/2}\rfloor}^{\gamma})^2\sim 4\pi n^{-1}, \qquad n\to \infty.
    \end{align*}
    Note that for $\gamma<0$, $\sigma_{n,0}^{\gamma}$ is asymptotically\footnote{in fact, as explained in the next line, the inequality holds always and not just asymptotically} larger than $\sigma_{n,\lfloor{n/2}\rfloor}^{\gamma}$, while for $\gamma>0$ the reverse is true.
    
    Finally, we claim that all other coefficients fall between $\sigma_{n,0}^{\gamma}$ and $\sigma_{n, \lfloor n/2 \rfloor}^{\gamma}$. More precisely we show
    \begin{lemma}\label{lem:cnkgamma}
	Let $c_{n,k}^{\gamma} = \sigma_{n,k+1}^{\gamma}/\sigma_{n,k}^{\gamma}$. For $\gamma<0$, we have $c_{n,k}^{\gamma}<1$ if $k<(n-1)/2$ and $c_{n,k}^{\gamma}>1$ if $k>(n-1)/2$. For $\gamma>0$, we have $c_{n,k}^{\gamma}>1$ if $k<(n-1)/2$ and $c_{n,k}^{\gamma}<1$ if $k>(n-1)/2$.
    \end{lemma}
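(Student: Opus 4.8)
The plan is to reduce everything to a single rational expression by working with the square $(c_{n,k}^\gamma)^2 = (\sigma_{n,k+1}^\gamma)^2/(\signk{\gamma})^2$, which avoids square roots and lets me apply the closed form \eqref{eq:SVD1} directly. First I would form this quotient and exploit the fact that the three factors in \eqref{eq:SVD1} telescope under the shift $k\mapsto k+1$: the binomial factors contribute $\binom{n}{k+1}/\binom{n}{k} = (n-k)/(k+1)$, the common $\Gamma(n+2\gamma+2)$ in the denominators cancels, and the shifted Gamma arguments give $\Gamma(k+\gamma+2)/\Gamma(k+\gamma+1) = k+\gamma+1$ together with $\Gamma(n-k+\gamma)/\Gamma(n-k+\gamma+1) = 1/(n-k+\gamma)$. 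Thus I expect the compact form
\begin{align*}
    (c_{n,k}^\gamma)^2 = \frac{(n-k)(k+\gamma+1)}{(k+1)(n-k+\gamma)}.
\end{align*}

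Next I would compare this to $1$ by computing the numerator of $(c_{n,k}^\gamma)^2 - 1$, namely $(n-k)(k+\gamma+1) - (k+1)(n-k+\gamma)$. Expanding both products, the quadratic terms $-k^2+nk$ and the $\gamma$-free linear terms $n-k$ cancel, leaving the clean factorization $\gamma(n-2k-1)$, so that
\begin{align*}
    (c_{n,k}^\gamma)^2 - 1 = \frac{\gamma(n-2k-1)}{(k+1)(n-k+\gamma)}.
\end{align*}
The whole lemma then follows by reading off signs.

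For the sign analysis, the one point that needs care — and the only real obstacle, minor as it is — is confirming that the denominator $(k+1)(n-k+\gamma)$ is strictly positive. Since $c_{n,k}^\gamma$ is only considered when both $k$ and $k+1$ lie in $\{0,\dots,n\}$, i.e.\ $0 \le k \le n-1$, we have $k+1 \ge 1$ and $n-k \ge 1$, whence $n-k+\gamma > 1+\gamma > 0$ for $\gamma>-1$; the denominator is therefore positive throughout. Consequently the sign of $(c_{n,k}^\gamma)^2 - 1$ coincides with the sign of $\gamma(n-2k-1)$. Since $n-2k-1>0 \iff k<(n-1)/2$ and $n-2k-1<0 \iff k>(n-1)/2$, distinguishing the cases $\gamma<0$ and $\gamma>0$ yields exactly the four assertions of the lemma (for instance, when $\gamma<0$ and $k<(n-1)/2$ the product $\gamma(n-2k-1)$ is negative, giving $(c_{n,k}^\gamma)^2<1$ and hence $c_{n,k}^\gamma<1$, and the remaining three cases follow identically).
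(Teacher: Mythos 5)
Your proof is correct and follows essentially the same route as the paper: both reduce to the identity $(c_{n,k}^\gamma)^2 = \frac{(n-k)(k+1+\gamma)}{(n-k+\gamma)(k+1)}$ obtained by telescoping the Gamma factors in \eqref{eq:SVD1}. The only cosmetic difference is the last step, where the paper compares this ratio to $1$ by writing it as $\bigl(1+\tfrac{\gamma}{k+1}\bigr)/\bigl(1+\tfrac{\gamma}{n-k}\bigr)$ and using monotonicity of $a\mapsto 1+\gamma/a$, while you cross-multiply and factor the difference as $\gamma(n-2k-1)$ — your explicit check that the denominator is positive (using $\gamma>-1$) is a point the paper leaves implicit.
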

    As a consequence, we see that for a fixed $n$ we have that $\sigma_{n,k}$ attains its minimum value at $k=\lfloor n/2\rfloor$ and its maximum value at $k=0$ and $k=n$ if $\gamma<0$, while if $\gamma>0$ then the reverse is true. 
    
    \begin{proof}[Proof of Lemma \ref{lem:cnkgamma}]
	From the property $\Gamma(z+1)=z\Gamma(z)$, we see that
	\begin{equation}
	    \label{svratio}
	    (c_{n,k}^{\gamma})^2 = \frac{n-k}{n-k+\gamma}\cdot\frac{k+1+\gamma}{k+1} = \left(1+\frac{\gamma}{k+1}\right)/\left(1+\frac{\gamma}{n-k}\right).
	\end{equation}
	If $\gamma<0$, then for $a,b>0$ we have $1+\frac{\gamma}{a}>1+\frac{\gamma}{b}\iff a>b$, and hence $1+\frac{\gamma}{k+1}<1+\frac{\gamma}{n-k}\iff k+1<n-k\iff k<(n-1)/2$, so $c_{n,k}^{\gamma}<1\iff k<(n-1)/2$ when $\gamma<0$. Analogously, $c_{n,k}^{\gamma}>1\iff k>(n-1)/2$ when $\gamma<0$. The analogous statements for $\gamma>0$ also follow from the above observation, noting now that for $a,b>0$ we have $1+\frac{\gamma}{a}>1+\frac{\gamma}{b}\iff a<b$. 
    \end{proof}

    From this, we have $(\sigma_{n,k}^{\gamma})^2\le\max((\sigma_{n,0}^{\gamma})^2,(\sigma_{n,\lfloor n/2\rfloor}^{\gamma})^2)\le C\max(n^{-1-\gamma},n^{-1})$, thus proving Lemma \ref{lem:SVDasym}.
        
\end{proof}

\paragraph{$L^\infty$-estimate for $\widehat{\gnk{\gamma}}$.}

\begin{proof}[Proof of Lemma \ref{lem:linftybound}]
    To obtain an estimate for $\|\gnk{\gamma}\|_{L^{\infty}}/\|\gnk{\gamma}\|_{L^2(\Dm, d^{\gamma})}$, our starting point is the expression \eqref{eq:lntognk} of $\gnk{\gamma}$ in terms of $L_n^{\gamma}(x)$, which immediately implies the crude bound
    \begin{align}
	\|\gnk{\gamma}\|_{L^{\infty}(\Dm)}\le \|L_n^{\gamma}\|_{L^{\infty}([-1,1])}.
	\label{eq:bound1}
    \end{align}
    Here, $\{L_n^\gamma\}_n$ are orthogonal in $L^2([-1,1],(1-x^2)^{\gamma+1/2}\,dx)$, thus they are multiples of the Jacobi polynomials $P^{(\gamma+1/2,\gamma+1/2)}_n$, or alternatively multiples of the ultraspherical/Gegenbauer polynomials $C^{(\gamma+1)}_n$; see \cite{Szegoe1938}. In particular, 
    \begin{align*}
	\frac{\|L_n^{\gamma}\|_{L^{\infty}([-1,1])}}{|l_n^\gamma|} = \frac{\|P^{(\gamma+1/2,\gamma+1/2)}_n\|_{L^{\infty}}}{|p^{(\gamma+1/2,\gamma+1/2)}_n|} = \frac{2^n(n+\gamma+1/2)!(n+2\gamma+1)!}{(\gamma+1/2)!(2n+2\gamma+1)!},
    \end{align*}
    where the lower-case letters indicate leading coefficients of the corresponding polynomial, and where the last equality comes from using the following formulas, see \cite{Szegoe1938}
    \begin{align*}
	\|P^{(\gamma+1/2,\gamma+1/2)}_n\|_{L^{\infty}} = \binom{n+\gamma+1/2}{n} = \frac{(n+\gamma+1/2)!}{n!(\gamma+1/2)!}, \qquad |p^{(\gamma+1/2,\gamma+1/2)}_n| = \frac{(2n+2\gamma+1)!}{2^nn!(n+2\gamma+1)!}. 
    \end{align*}
    From Legendre's duplication formula \eqref{eq:duplication}, it follows that
    \begin{align*}
	(2n+2\gamma+1)! = \Gamma(2n+2\gamma+2) &= \frac{1}{\sqrt{\pi}}2^{2n+2\gamma+1}\Gamma(n+\gamma+1)\Gamma(n+\gamma+3/2) \\
	&= \frac{1}{\sqrt{\pi}}2^{2n+2\gamma+1}(n+\gamma)!(n+\gamma+1/2)!    
    \end{align*}
    and hence
    \begin{align}
	\frac{\|L_n^{\gamma}\|_{L^{\infty}}}{|l_n^{\gamma}|} = \frac{\sqrt{\pi}(n+2\gamma+1)!}{2^{n+2\gamma+1}(\gamma+1/2)!(n+\gamma)!}.
	\label{eq:Lratio}
    \end{align}
    Combining \eqref{eq:bound1}, \eqref{eq:Lratio} and \eqref{eq:gnkgamma}, we arrive at 
    \begin{align}
	\|\gnk{\gamma}\|_{L^{\infty}} \le \frac{\sqrt{\pi}(n+2\gamma+1)!}{2^{n+2\gamma+1}(\gamma+1/2)!(n+\gamma)!}|l_n^{\gamma}| = \frac{\sqrt{\pi}(n+2\gamma+1)!k!(n-k)!}{2^{2\gamma+1}(\gamma+1/2)!n!(n+\gamma)!}|g_{n,k}^{\gamma}|.
	\label{eq:bound2}
    \end{align}
    On to getting a handle on the $L^2$ norm, $\gnk{\gamma}$ are multiples of the generalized Zernike polynomials $P^{\gamma}_{n-k,k}$ in \cite{Wuensche2005}, and using \eqref{eq:norms}, we have
    \begin{align}
	\frac{\|G_{n,k}^{\gamma}\|_{L^2(\Dm, d^{\gamma})}^2}{|g_{n,k}^{\gamma}|^2} = \frac{\|P^{\gamma}_{n-k,k}\|_{L^2(\Dm, d^{\gamma})}^2}{|p^{\gamma}_{n-k,k}|^2} \stackrel{\eqref{eq:norms}}{=} \frac{\pi (n-k)!k!(n-k+\gamma)!(k+\gamma)!}{(n+\gamma+1)((n+\gamma)!)^2}.
	\label{eq:L2}
    \end{align}
    Combining \eqref{eq:bound2} with \eqref{eq:L2}, 
    \begin{align*}
	\|\gnk{\gamma}\|_{L^{\infty}}^2 &\le \frac{\pi((n+2\gamma+1)!k!(n-k)!)^2}{2^{4\gamma+2}((\gamma+1/2)!n!(n+\gamma)!)^2}|g_{n,k}^{\gamma}|^2 \\
	&=\frac{\pi((n+2\gamma+1)!k!(n-k)!)^2}{2^{4\gamma+2}((\gamma+1/2)!n!(n+\gamma)!)^2}\frac{(n+\gamma+1)((n+\gamma)!)^2}{\pi (n-k)!k!(n-k+\gamma)!(k+\gamma)!}\|G_{n,k}^{\gamma}\|_{L^2(\Dm,d^{\gamma})}^2 \\
	&=\frac{k!(n-k)!(n+\gamma+1)((n+2\gamma+1)!)^2}{2^{4\gamma+2}((\gamma+1/2)!n!)^2(n-k+\gamma)!(k+\gamma)!}\|G_{n,k}^{\gamma}\|_{L^2(\Dm,d^{\gamma})}^2
    \end{align*}
    Let $C_{n,k}^2 := \frac{k!(n-k)!(n+\gamma+1)((n+2\gamma+1)!)^2}{2^{4\gamma+2}((\gamma+1/2)!n!)^2(n-k+\gamma)!(k+\gamma)!}$ be the constant appearing above. Noting that $C_{n,n-k}^2 = C_{n,k}^2$, it is enough to study the case $0\le k\le n/2$. Similarly to the proof of Lemma \ref{lem:SVDasym}, we show that obtaining $k$-independent asymptotics for $C_{n,k}^2$ involves showing that all terms are between the following two asymptotic regimes, as $n\to \infty$,
    \begin{align*}
	C_{n,0}^2\sim\frac{1}{(2^{2\gamma+1}(\gamma+1/2)!)^2(\gamma)!}n^{3\gamma+2} \quad\text{and}\quad C_{n,\lfloor n/2\rfloor}^2\sim\frac{1}{(2^{\gamma+1}(\gamma+1/2)!)^2}n^{2\gamma+2}. 
    \end{align*}
    Indeed, letting $c_{n,k} := C_{n,k+1}/C_{n,k}$, we have
    \[c_{n,k}^2 = \frac{(k+1)(n-k+\gamma)}{(k+1+\gamma)(n-k)}.\]
    This is exactly the inverse of the quantity in \eqref{svratio}, and hence this is greater than $1$ for $k<(n-1)/2$ and less than $1$ for $k>(n-1)/2$ if $\gamma<0$; an analogous statement holds for $\gamma>0$. As a result, if $\gamma<0$, then $C_{n,k}$ is maximized (for fixed $n$) at $k=\lfloor n/2\rfloor$ and minimized at $k\in \{0,n\}$ (hence the estimate ${\cal O} ((n+1)^{\gamma+1})$), while if $\gamma>0$ the opposite is true (hence the estimate ${\cal O} ( (n+1)^{\frac{3}{2}\gamma+1})$). Lemma \ref{lem:linftybound} is proved.      
\end{proof}

\paragraph{Properties of the Sobolev scale $\wtH^{s,\gamma}$.}

\begin{proof}[Proof of Lemma \ref{lem:diffmap}]
    From \cite{Wuensche2005}, we have, for all $m,\ell\in \Nm_0$ and $\gamma>-1$, 
    \begin{align*}
	\frac{\partial}{\partial z}P_{m,\ell}^{\gamma}(z) = \frac{m(\ell+1+\gamma)}{1+\gamma}P_{m-1,\ell}^{\gamma+1}(z) \qquad \text{and}\qquad \frac{\partial}{\partial\overline{z}}P_{m,\ell}^{\gamma}(z) = \frac{(m+1+\gamma)\ell}{1+\gamma}P_{m,\ell-1}^{\gamma+1}(z).    
    \end{align*}
    Since
    \begin{align*}
	\|P_{m,\ell}^{\gamma}\|_{L^2(\Dm,d^{\gamma})}^2 = \frac{\pi m!(\gamma !)^2 \ell!}{(m+\ell+\gamma+1)(m+\gamma)!(\ell+\gamma)!},	
    \end{align*}
    it follows that for the $L^2(\Dm,d^{\gamma})$-normalized polynomials $\widehat{P_{m,\ell}^{\gamma}}$ we have
    \[\frac{\partial}{\partial z}\widehat{P_{m,\ell}^{\gamma}} = c_{m,\ell,\gamma}\ \widehat{P_{m-1,\ell}^{\gamma+1}}\]
    where $c_{m,\ell,\gamma} = \frac{m(\ell+1+\gamma)}{1+\gamma}\cdot\frac{\|P_{m-1,\ell}^{\gamma+1}\|_{L^2(\Dm, d^{\gamma+1})}}{\|P_{m,\ell}^{\gamma}\|_{L^2(\Dm, d^{\gamma})}}$; hence
    \[c_{m,\ell,\gamma}^2 = \frac{m^2(\ell+1+\gamma)^2(m-1)!((\gamma+1)!)^2(\ell+\gamma)!}{(1+\gamma)^2m!(\gamma!)^2(\ell+\gamma+1)!} = \frac{m^2(\ell+1+\gamma)^2(\gamma+1)^2}{(\gamma+1)m(\ell+\gamma+1)} = (\gamma+1)m(\ell+\gamma+1).\]
    Similarly, for all $m,\ell\in \Nm_0$ and $\gamma>-1$, 
    \begin{align*}
	\frac{\partial}{\partial\overline{z}}\widehat{P_{m,\ell}^{\gamma}} = c_{m,\ell,\gamma}^*\ \widehat{P_{m,\ell-1}^{\gamma+1}}, \quad\text{where}\quad (c_{m,\ell,\gamma}^*)^2 := (\gamma+1)(m+\gamma+1)\ell.    
    \end{align*}
    It follows that, with $\widehat{\gnk{\gamma}} = (-1)^k \widehat{P_{n-k,k}^\gamma}$,
    \begin{align*}
	\frac{\partial}{\partial z}\widehat{\gnk{\gamma}} &= ((\gamma+1)(n-k)(k+\gamma+1))^{1/2}\ \widehat{Z_{n-1,k}^{\gamma+1}} \\
	\frac{\partial}{\partial\overline{z}}\widehat{\gnk{\gamma}} &= -((\gamma+1)(n-k+\gamma+1)k)^{1/2}\ \widehat{Z_{n-1,k-1}^{\gamma+1}}.	
    \end{align*}
    (For convention purposes, we take $\widehat{\gnk{\gamma}} = 0$ if $k<0$ or $n-k<0$.) Thus, for $f = \sum{f_{n,k}\widehat{\gnk{\gamma}}}$, we have
    \begin{align*}
	\left\|\frac{\partial}{\partial z}f\right\|_{\wtH^{s,\gamma+1}(\Dm)}^2 &= \left\|\sum_{n\ge 0, 0\le k\le n}{((\gamma+1)(n-k)(k+\gamma+1))^{1/2}f_{n,k}\ \widehat{Z_{n-1,k}^{\gamma+1}}}\right\|_{\wtH^{s,\gamma+1}(\Dm)}^2 \\
	&=\left\|\sum_{n\ge 0, 0\le k\le n}{((\gamma+1)(n+1-k)(k+\gamma+1))^{1/2}f_{n+1,k}\ \widehat{\gnk{\gamma+1}}}\right\|_{\wtH^{s,\gamma+1}(\Dm)}^2 \\
	&=(\gamma+1)\sum_{n\ge 0, 0\le k\le n}{(n+1-k)(k+\gamma+1)(n+(\gamma+1)+1)^{2s}|f_{n+1,k}|^2} \\
	&\le(\gamma+1)\sum_{n\ge 0, 0\le k\le n}{\frac{(n+\gamma+2)^2}{4}(n+\gamma+2)^{2s}|f_{n+1,k}|^2} \\
	&=\frac{\gamma+1}{4}\sum_{n\ge 0, 0\le k\le n}{(n+\gamma+2)^{2s+2}|f_{n+1,k}|^2} \\
	&=\frac{\gamma+1}{4}\sum_{n\ge 1, 0\le k\le n-1}{(n+\gamma+1)^{2(s+1)}|f_{n,k}|^2} \le\frac{\gamma+1}{4}\|f\|_{\wtH^{s+1,\gamma}(\Dm)}^2.
    \end{align*}
    (In the fourth line, we used the fact that $(n+1-k)(k+\gamma+1)\le (n+\gamma+2)^2/4$ for any value of $k$.) Similarly, there is a constant $C$ such that
    \begin{align*}
	\left\|\frac{\partial}{\partial\overline{z}}f\right\|_{\wtH^{s,\gamma+1}(\Dm)}^2\le  C\|f\|_{\wtH^{s+1,\gamma}(\Dm)}^2,  
    \end{align*}
    and Lemma \ref{lem:diffmap} is proved.     
\end{proof}

\begin{proof}[Proof of Lemma \ref{lem:continc}]
    (Proof of (a)) By the Weierstrass M-test, it suffices to check that the series 
    \begin{align*}
	\sum_{n\ge 0}\sum_{0\le k\le n}{|f_{n,k}|\left\|\widehat{\gnk{\gamma}}\right\|_{L^{\infty}}}
    \end{align*}
    converges when $f = \sum{f_{n,k}\widehat{\gnk{\gamma}}}\in \wtH^{s,\gamma}(\Dm)$. From Cauchy-Schwarz, we have
    \begin{align*}
	\sum_{n\ge 0}\sum_{0\le k\le n}{|f_{n,k}| \left\|\widehat{\gnk{\gamma}}\right\|_{L^{\infty}}}\le\left(\sum_{n\ge 0}\sum_{0\le k\le n}{\frac{\left\|\widehat{\gnk{\gamma}}\right\|_{L^{\infty}}^2}{(n+1+\gamma)^{2s}}}\right)^{1/2}\|f\|_{\wtH^{s,\gamma}(\Dm)}.	
    \end{align*}
    and
    \begin{align*}
	\sum_{n\ge 0}\sum_{0\le k\le n}{\frac{\left\|\widehat{\gnk{\gamma}}\right\|_{L^{\infty}}^2}{(n+1+\gamma)^{2s}}}\le\sum_{n\ge 0}\sum_{0\le k\le n}{\frac{C(n+1)^{2+\max(2\gamma,3\gamma)}}{(n+1+\gamma)^{2s}}} = C\sum_{n\ge 0}{\frac{(n+1)^{3+\max(2\gamma,3\gamma)}}{(n+1+\gamma)^{2s}}}.	
    \end{align*}
    The summand is asymptotic to $n^{3+\max(2\gamma,3\gamma)-2s}$, so the sum converges when $3+\max(2\gamma,3\gamma)-2s<-1$, i.e. when $s>2+\max(\gamma,\frac{3}{2}\gamma)$.

    (Proof of (b)) By Lemma \ref{lem:diffmap}, we see that for \emph{any} multi-index $\alpha$, $\partial^{\alpha}$ maps continuously from $\wtH^{s+|\alpha|,\gamma}$ to $\wtH^{s,\gamma+|\alpha|}$, or equivalently from $\wtH^{s,\gamma}$ to $\wtH^{s-|\alpha|,\gamma+|\alpha|}$ for $s\ge|\alpha|$. Combining this with (a), for $\gamma>-1$ and $|\alpha|\ge 1$, we have $\partial^{\alpha}f\in C(\Dm)$ if $f\in \wtH^{s,\gamma}$ with $s-|\alpha|>2+\frac{3}{2}(\gamma+|\alpha|)$, i.e. if $s>2+\frac{3}{2}\gamma+\frac{5}{2}|\alpha|$.

    (Proof of (c)) The inclusion $(\supset)$ is clear since $\|f\|_{\wtH^{2n,\gamma}}(\Dm) = \|(\L_{\gamma})^n f\|_{L^2(\Dm,d^{\gamma}\,dx)}$ will be finite for all $n$ whenever $f\in C^\infty(\Dm)$. On to proving $(\subset)$, suppose $f\in \cap_s \wtH^{s,\gamma}(\Dm)$ and fix $p,q\in \Nm_0$. For all $s \ge 0$, since $f\in \wtH^{s+p+q,\gamma}(\Dm)$, Lemma \ref{lem:diffmap} implies that $\partial_z^p \partial_{\zbar}^q f \in \wtH^{s,\gamma+p+q}(\Dm)$, then using (a) with any $s>2+\max(\gamma+p+q, \frac{3}{2} (\gamma+p+q))$, we obtain that $\partial_z^p \partial_{\zbar}^q f\in C(\Dm)$, and thus $f\in C^\infty(\Dm)$.
\end{proof}

\subsection{Proof of Theorem \ref{thm:rangeCharac}} \label{sec:proofrangeCharac}

We now provide a proof of the range characterization. 

\begin{proof}[Proof of Theorem \ref{thm:rangeCharac}]
    A function $\psi\in \mu^{2\gamma+1}C_{\alpha,+,+}^\infty(\partial_+ S\Dm)$ can be written as $\psi = \sum_{n\ge 0, k\in \Zm} a_{n,k} \widehat{\psink{\gamma}}$ where $(\T_\gamma)^p \partial_\beta^q \psi \in \mu^{2\gamma+1}C_{\alpha,+,+}^\infty(\partial_+ S\Dm)$ for all $p,q\in \Nm_0$. This implies the condition
    \begin{align}
	\sum_{n\ge 0,\ k\in \Zm} (n+1+\gamma)^{4p} (n-2k)^{2q} |a_{n,k}|^2 <\infty, \quad p,q\in \Nm_0. 
	\label{eq:decay}
    \end{align}
    Now from Theorem \ref{thm:SVD}, such a function $\psi$ is equal to $I_0 d^\gamma f$, where $f := \sum_{n\ge 0, 0\le k\le n} \frac{a_{n,k}}{\sigma_{n,k}^\gamma} \widehat{\gnk{\gamma}}$, if and only if $a_{n,k} = 0$ for $k\notin \{0, 1, \dots, n\}$, which is equivalent to \eqref{eq:HLGG}. In the case where such an equality holds, the fact that $f\in C^\infty(\Dm)$ comes from combining \eqref{eq:decay} (with $p\in \Nm$ and $q=0$), Lemma \ref{lem:SVDasym} and Lemma \ref{lem:continc}.(c), as $f\in \cap_{p\ge 0} \wtH^{p,\gamma}(\Dm) = C^\infty(\Dm)$. Theorem \ref{thm:rangeCharac} is proved.
\end{proof}

\section{Constant curvature disks - Proof of Theorem \ref{thm:CCD}}\label{sec:CCDs}

Following notation in \cite{Monard2019a}, fix $\kappa\in \Rm$ and $R>0$ such that $R^2|\kappa|<1$, define $\Dm_R = \{(x,y)\in \Rm^2: x^2+y^2\le R^2\}$, equipped with the metric $g_\kappa(z) := (1+\kappa|z|^2)^{-2} |dz|^2$, of constant curvature $4\kappa$. We denote $S_{(\kappa)}\Dm_R$ the unit tangent bundle
\begin{align*}
    S_{(\kappa)}\Dm_R = \{(x,v) \in T\Dm_R,\quad (g_\kappa)_x(v,v) = 1\},
\end{align*}
with inward boundary $\partial_+ S_{(\kappa)}\Dm_R$ defined as usual. The latter is parameterized in fan-beam coordinates $(\beta,\alpha)\in \Sm^1\times (-\pi/2,\pi/2)$, where $\beta$ describes the boundary point $x = R e^{i\beta}$, and $\alpha$ describes the angle of the tangent vector with respect to the unit inner normal $\nu_x$, i.e. $v = (1+R^2\kappa) e^{i(\beta+\pi+\alpha)}$. The manifold $\partial_+ S_{(\kappa)}\Dm_R$ is a model for all geodesics on $\Dm_R$ intersecting $\partial \Dm_R$ transversally, equipped with the measure $d\Sigma^2 = R (1+R^2\kappa)^{-1} d\beta\ d\alpha$.

Keeping $R, \kappa$ fixed, we introduce the diffeomorphism $\Phi\colon \Dm_R\to \Dm_1$ as $\Phi(z) := \frac{1-\kappa R^2}{1-\kappa |z|^2} \frac{z}{R}$, as well as the function $w(z) := \frac{1+\kappa |z|^2}{1-\kappa |z|^2}$. The following change of volume formula holds, as proved in \cite[Eq. (51)]{Monard2019a}
\begin{align}
    \Phi^*(d\Vol_e) = \frac{(1-\kappa R^2)^2}{R^2} w^3 \ d\Vol_\kappa
    \label{eq:chgvol}
\end{align}

With $d_e(z) := 1-|z|^2$ for $z\in \Dm$, a first observation is that $d := d_e \circ \Phi$ is a boundary defining function\footnote{in the sense that it is smooth on $\Dm_R$, positive in the interior, and vanishes to first order at $\partial \Dm_R$} for $\Dm_R$. Indeed, one computes directly that 
\begin{align*}
    d(z):= d_e (\Phi(z)) = \left( 1-\frac{|z|^2}{R^2} \right) \frac{1-\kappa^2 R^2 |z|^2}{1-\kappa |z|^2}, \qquad z\in \Dm_R, 
\end{align*}
where, under the constraint $|\kappa|R^2<1$, the second factor is non-vanishing on $\Dm_R$.

Define the map $\ss\colon \partial_+ S_{(\kappa)} \Dm_R \to \partial_+ S\Dm$ by  
\begin{align*}
    \ss(\beta,\alpha) := \left(\beta, \tan^{-1}\left( \frac{1-R^2\kappa}{1+R^2\kappa} \tan\alpha \right)\right),
\end{align*}
a diffeomorphism with Jacobian determinant denoted $\ss'$, in particular such that 
\begin{align}
    \ss^* (d\Sigma_e^2) = \frac{1+\kappa R^2}{R} \ss'\ d\Sigma^2.
    \label{eq:chgvol2}
\end{align}
It is further proved in \cite[Eq. (44)]{Monard2019a} that 
\begin{align}
    \mu_e \circ \ss = \sqrt{\frac{1-\kappa R^2}{1+\kappa R^2}} \sqrt{\ss'} \mu \qquad \text{ on } \qquad \partial_+ S_{(\kappa)}\Dm_R.
    \label{eq:murel}
\end{align}

The starting point is the intertwining relation given in \cite[Theorem 18]{Monard2019a}, with adjusted notation:  
\begin{align}
    (I_0^\sharp \mu^{-1})^e = \sqrt{\frac{1+\kappa R^2}{1-\kappa R^2}} (\Phi^{-1})^* \frac{1}{w} (I_0^\sharp \mu^{-1}) \sqrt{\ss'} \ss^*.
    \label{eq:interIstar}
\end{align}
It can be viewed as the equality between two operators on $C^\infty( (\partial_+ S\Dm)^{int} )$. We now compute
\begin{align*}
    (I_0^\sharp \mu^{-1-2\gamma})^e = (I_0^\sharp \mu^{-1})^e \mu_e^{-2\gamma}\ &\!\! \stackrel{\eqref{eq:murel}}{=} \left( \frac{1+\kappa R^2}{1-\kappa R^2} \right)^{\gamma} (I_0^\sharp \mu^{-1})^e ( (\sqrt{\ss'} \mu)^{-2\gamma} )\circ \ss^{-1} \\
    &\!\! \stackrel{\eqref{eq:interIstar}}{=} \left( \frac{1+\kappa R^2}{1-\kappa R^2} \right)^{\gamma+1/2} (\Phi^{-1})^* \frac{1}{w} (I_0^\sharp \mu^{-1}) \sqrt{\ss'} \ss^* ( (\sqrt{\ss'} \mu)^{-2\gamma} )\circ \ss^{-1} \\
    &= \left( \frac{1+\kappa R^2}{1-\kappa R^2} \right)^{\gamma+1/2} (\Phi^{-1})^* \frac{1}{w} (I_0^\sharp \mu^{-1-2\gamma}) (\sqrt{\ss'})^{-2\gamma+1} \ss^*, 
\end{align*}
which is an intertwining relation between $(I_0^\sharp \mu^{-1-2\gamma})^e$ and $I_0^\sharp \mu^{-1-2\gamma}$. It is an equality between two operators on $C^\infty((\partial_+ S\Dm)^{int})$, in particular as operators on $\mu^{1+2\gamma} C_{\alpha,+,+}^\infty(\partial_+ S\Dm)$, and by density and boundedness, on $L^2_+(\partial_+ S\Dm, \mu_e^{-2\gamma})$. We can then work out the corresponding intertwining relation for the adjoints. Namely, for $f\in L^2(\Dm, d^\gamma)$ and $g\in L^2_+ (\partial_+ S\Dm, \mu_e^{-2\gamma})$, 
\begin{align*}
    \langle g, I_0^e (d^\gamma f) \rangle_{\mu_e^{-2\gamma} d\Sigma_e^2} &= \langle (I_0^\sharp \mu^{-1-2\gamma})^e g, f \rangle_{d^\gamma d\Vol_e} \\
    &= \left( \frac{1+\kappa R^2}{1-\kappa R^2} \right)^{\gamma+1/2} \left\langle (\Phi^{-1})^* \frac{1}{w} (I_0^\sharp \mu^{-1-2\gamma}) (\sqrt{\ss'})^{-2\gamma+1} \ss^* g, f \right\rangle_{d_e^\gamma d\Vol_e} \\
    &= \left( \frac{1+\kappa R^2}{1-\kappa R^2} \right)^{\gamma+1/2} \left\langle \frac{1}{w} (I_0^\sharp \mu^{-1-2\gamma}) (\sqrt{\ss'})^{-2\gamma+1} \ss^* g, \Phi^* f \right\rangle_{(\Phi^* d_e)^\gamma \Phi^* (d\Vol_e)} \\
    &\!\!\stackrel{\eqref{eq:chgvol}}{=} \frac{1}{R^2} \frac{(1+\kappa R^2)^{\gamma+1/2}}{(1-\kappa R^2)^{\gamma-3/2}}  \left\langle (I_0^\sharp \mu^{-1-2\gamma}) (\sqrt{\ss'})^{-2\gamma+1} \ss^* g, w^2 \Phi^* f \right\rangle_{(\Phi^* d_e)^\gamma d\Vol_\kappa} \\
    &= \frac{1}{R^2} \frac{(1+\kappa R^2)^{\gamma+1/2}}{(1-\kappa R^2)^{\gamma-3/2}} \left\langle \ss^* g, I_0 (\Phi^* d_e)^\gamma w^2 \Phi^* f \right\rangle_{ (\sqrt{\ss'})^{-2\gamma+1} \mu^{-2\gamma} d\Sigma^2} \\
    &\!\!\stackrel{\eqref{eq:chgvol2}}{=} \frac{1}{R} \frac{(1+\kappa R^2)^{\gamma-1/2}}{(1-\kappa R^2)^{\gamma-3/2}} \left\langle \ss^* g, I_0 (\Phi^* d_e)^\gamma w^2 \Phi^* f \right\rangle_{ (\sqrt{\ss'})^{-2\gamma-1} \mu^{-2\gamma} \ss^* (d\Sigma_e^2)} \\
    &= \frac{1}{R} \frac{(1+\kappa R^2)^{-1/2}}{(1-\kappa R^2)^{-3/2}} \left\langle \ss^* g, \sqrt{\ss'}^{-1} I_0 (\Phi^* d_e)^\gamma w^2 \Phi^* f \right\rangle_{ \ss^* (\mu_e^{-2\gamma}) \ss^* (d\Sigma_e^2)} \\
    &\!\!\stackrel{\eqref{eq:murel}}{=} \frac{1}{R} \frac{(1+\kappa R^2)^{-1/2}}{(1-\kappa R^2)^{-3/2}}\left\langle g, (\ss^{-1})^* \sqrt{\ss'}^{-1} I_0 (\Phi^* d_e)^\gamma w^2 \Phi^* f \right\rangle_{ \mu_e^{-2\gamma} d\Sigma_e^2 }, 
\end{align*}
hence the intertwining relation: 
\begin{align*}
    I_0^e d^\gamma = \frac{1}{R} \frac{(1-\kappa R^2)^{3/2}}{(1+\kappa R^2)^{1/2}} (\ss^{-1})^* \sqrt{\ss'}^{-1} I_0 (\Phi^* d_e)^\gamma w^2 \Phi^*,
\end{align*}
which generalizes the specific case $\gamma=0$ in \cite[Eq. (50)]{Monard2019a}. Concatenating the last two calculations, we obtain
\begin{align*}
    (I_0^\sharp \mu^{-1-2\gamma})^e I_0^e d^\gamma &= \frac{1}{R} \frac{(1+\kappa R^2)^{\gamma}}{(1-\kappa R^2)^{\gamma-1}} (\Phi^{-1})^* \frac{1}{w} (I_0^\sharp \mu^{-1-2\gamma}) (\sqrt{\ss'})^{-2\gamma} I_0 (\Phi^* d_e)^\gamma w^2 \Phi^* \\
    &\stackrel{\eqref{eq:murel}}{=} \frac{1-\kappa R^2}{R} (\Phi^{-1})^* \frac{1}{w} I_0^\sharp \mu^{-1} (\ss^*\mu_e)^{-2\gamma} I_0 (\Phi^* d_e)^\gamma w^2 \Phi^*.
\end{align*}
Since the left-hand side is a $C^\infty(\Dm)$-isomorphism, and pullbacks by $\Phi, \Phi^{-1}$ and multiplication by $w, w^{-1}$ are isomorphisms of $C^\infty$ spaces, we deduce that 
\begin{align*}
    (I_0^\sharp \mu^{-1}) (\ss^*\mu_e)^{-2\gamma} I_0 (\Phi^* d_e)^\gamma
\end{align*}
is an isomorphism of $C^\infty(\Dm_R)$. Theorem \ref{thm:CCD} is proved upon setting $d = \Phi^* d_e$ and $t = (\mu (\ss^*\mu_e)^{2\gamma} )^{\frac{1}{2\gamma+1}}$.  

\subsection*{Acknowledgement.}
F.M. and Y.Z. gratefully acknowledge support from NSF CAREER grant DMS-1943580. \F{The authors thank the anonymous referee, whose comments improved the exposition of this article.}

\bibliographystyle{abbrv}

\begin{thebibliography}{10}

\bibitem{Assylbekov2018}
Y.~M. Assylbekov and P.~Stefanov.
\newblock Sharp stability estimate for the geodesic ray transform.
\newblock {\em Inverse problems}, 36(2):025013, 2020.

\bibitem{Epstein2013}
C.~L. Epstein and R.~Mazzeo.
\newblock {\em Degenerate diffusion operators arising in population biology}.
\newblock Princeton University Press, 2013.

\bibitem{GelfandGraev1960}
I.~Gelfand and M.~Graev.
\newblock Integrals over hyperplanes of basic and generalized functions.
\newblock {\em Dokl. Akad. Nauk. SSSR}, 135(6):1307--1310, 1960.
\newblock English transl., Soviet Math. Dokl. {\bf 1} (1960), 1369--1372.

\bibitem{Guillarmou2017}
C.~Guillarmou and F.~Monard.
\newblock Reconstruction formulas for x-ray transforms in negative curvature.
\newblock {\em Annales Inst. Fourier}, 67(4):1353--1392, 2017.
\newblock 

\bibitem{Helffer2013}
B.~Helffer.
\newblock {\em Spectral theory and its applications}.
\newblock Number 139. Cambridge University Press, 2013.

\bibitem{Helgason1999}
S.~Helgason.
\newblock {\em The Radon Transform}.
\newblock Birk\"auser, second edition, 1999.

\bibitem{Helgason2010}
S.~Helgason.
\newblock {\em Integral geometry and Radon transforms}.
\newblock Springer Science \& Business Media, 2010.

\bibitem{Ilmavirta2019}
J.~Ilmavirta and F.~Monard.
\newblock 4 integral geometry on manifolds with boundary and applications.
\newblock {\em The Radon Transform: The First 100 Years and Beyond}, 22:43,
  2019.

\bibitem{Kazantsev2004}
S.~G. Kazantsev and A.~A. Bukhgeim.
\newblock Singular value decomposition for the 2d fan-beam radon transform of
  tensor fields.
\newblock {\em Journal of Inverse and Ill-posed Problems jiip}, 12(3):245--278,
  2004.

\bibitem{Krishnan2021}
V.~P. Krishnan and V.~A. Sharafutdinov.
\newblock Ray transform on sobolev spaces of symmetric tensor fields, i: Higher
  order reshetnyak formulas.
\newblock {\em arXiv preprint arXiv:2106.11624}, 2021.

\bibitem{Louis1984}
A.~K. Louis.
\newblock Orthogonal function series expansions and the null space of the radon
  transform.
\newblock {\em SIAM journal on mathematical analysis}, 15(3):621--633, 1984.

\bibitem{Ludwig1966}
D.~Ludwig.
\newblock The radon transform on euclidean space.
\newblock {\em Communications on Pure and Applied Mathematics}, 19(1):49--81,
  1966.

\bibitem{Maass1991}
P.~Maass.
\newblock Singular value decompositions for radon transforms.
\newblock In {\em Mathematical Methods in Tomography}, pages 6--14. Springer,
  1991.

\bibitem{Mazzeo2021}
R.~Mazzeo and F.~Monard.
\newblock Double b-fibrations and desingularization of the x-ray transform on
  manifolds with strictly convex boundary.
\newblock {\em arXiv preprint arXiv:2112.14904}, 2021.

\bibitem{Mishra2019}
R.~K. Mishra and F.~Monard.
\newblock Range characterizations and singular value decomposition of the
  geodesic x-ray transform on disks of constant curvature.
\newblock {\em Journal of Spectral Theory}, 11(3):1005--1041, 2021.
\newblock 

\bibitem{Monard2015a}
F.~Monard.
\newblock Efficient tensor tomography in fan-beam coordinates.
\newblock {\em Inverse Probl. Imaging}, 10(2):433--459, May 2016.
\newblock 

\bibitem{Monard2019a}
F.~Monard.
\newblock Functional relations, sharp mapping properties, and regularization of
  the x-ray transform on disks of constant curvature.
\newblock {\em SIAM Journal on Mathematical Analysis}, 52(6):5675--5702, 2020.

\bibitem{Monard2017}
F.~Monard, R.~Nickl, and G.~P. Paternain.
\newblock Efficient nonparametric {B}ayesian inference for {X}-ray transforms.
\newblock {\em The Annals of Statistics}, 47(2):1113--1147, 2019.
\newblock 

\bibitem{Monard2019}
F.~Monard, R.~Nickl, and G.~P. Paternain.
\newblock Consistent inversion of noisy non-abelian x-ray transforms.
\newblock {\em Communications on Pure and Applied Mathematics},
  74(5):1045--1099, 2021.

\bibitem{Monard2020a}
F.~Monard, R.~Nickl, and G.~P. Paternain.
\newblock Statistical guarantees for bayesian uncertainty quantification in
  non-linear inverse problems with gaussian process priors.
\newblock {\em The Annals of Statistics}, 49(6):3255--3298, 2021.
\newblock 

\bibitem{Monard2021}
F.~Monard and P.~Stefanov.
\newblock Sampling the x-ray transform on simple surfaces.
\newblock {\em SIAM Journal on Mathematical Analysis} (to appear), 2022. 

\bibitem{Paternain2021}
G.~Paternain, M.~Salo, and G.~Uhlmann.
\newblock {\em Geometric inverse problems in 2d}.
\newblock 2021.

\bibitem{Pestov2004}
L.~Pestov and G.~Uhlmann.
\newblock On the characterization of the range and inversion formulas for the
  geodesic {X}-ray transform.
\newblock {\em International Math. Research Notices}, 80:4331--4347, 2004.

\bibitem{Pestov2005}
L.~Pestov and G.~Uhlmann.
\newblock Two-dimensional compact simple {R}iemannian manifolds are boundary
  distance rigid.
\newblock {\em Annals of Mathematics}, 161(2):1093--1110, 2005.

\bibitem{Sharafutdinov2020}
V.~A. Sharafutdinov.
\newblock X-ray transform on sobolev spaces.
\newblock {\em Inverse Problems}, 37(1):015007, 2020.

\bibitem{Stefanov2004}
P.~Stefanov and G.~Uhlmann.
\newblock Stability estimates for the {X}-ray transform of tensor fields and
  boundary rigidity.
\newblock {\em Duke Math. J.}, 123(3):445--467, 2004.

\bibitem{Szegoe1938}
G.~Szeg\"o.
\newblock {\em Orthogonal Polynomials}, volume XXIII.
\newblock Amer. Math. Soc. Coll. Publ., 1938.

\bibitem{Wuensche2005}
A.~W{\"u}nsche.
\newblock Generalized zernike or disc polynomials.
\newblock {\em Journal of computational and applied mathematics},
  174(1):135--163, 2005.

\end{thebibliography}

\end{document}